\definecolor{red}{rgb}{1,0,0}
\newcommand\red[1]{\textcolor{red}{#1}}
\definecolor{blue}{rgb}{0,0,1}
\newcommand\blue[1]{\textcolor{blue}{#1}}
\newtheorem{theorem}{Theorem}
\newtheorem{remark}{Remark}
\newtheorem{lemma}{Lemma}
\newtheorem{problem}{Problem}
\newtheorem{algorithm}{Algorithm}
\newcommand\grad{\operatorname{grad}}
\renewcommand\div{\operatorname{div}}
\newcommand\curl{\operatorname{curl}}
\newcommand{\scurl}{\curl}
\DeclareMathOperator{\vcurl}{\mathbf{curl}}
\newcommand{\Hhc}{H^{h}_{0}(\curl)}
\newcommand{\Ltz}{L^2_0(\Omega)}
\newcommand{\Hhd}{H^{h}_{0}(\div)}
\newcommand{\Hd}{{H}(\mathrm{div}; \Omega)}
\newcommand{\Hc}{{H}(\mathrm{curl}; \Omega)}
\renewcommand\S{{S}}
\newcommand\R{\mathbb{R}}
\newcommand\A{{\mathcal A}}
\renewcommand{\Re}{\ensuremath{\mathrm{Re}}}
\newcommand{\Rem}{\ensuremath{\mathrm{Re_m}}}
\newcommand{\Reminv}{\ensuremath{\mathrm{Re_m^{-1}}}}
\newcommand{\RH}{\ensuremath{\mathrm{R_H}}}
\newcommand{\Qc}{\mathbb{Q}_{c}}
\newcommand{\Qd}{\mathbb{Q}_{d}}
\newcommand{\MM}{\mathcal{M}}
\newcommand{\NN}{\mathcal{N}}
\newcommand{\CC}{\mathcal{C}}
\newcommand{\DD}{\mathcal{D}}
\renewcommand{\AA}{\mathcal{A}}
\newcommand{\BB}{\mathcal{B}}
\newcommand{\GG}{\mathcal{G}}
\newcommand{\FF}{\mathcal{F}}
\newcommand{\KK}{\mathcal{K}}
\newcommand{\LL}{\mathcal{L}}
\newcommand{\B}{\mathbf{B}}
\newcommand{\E}{\mathbf{E}}
\renewcommand{\u}{\mathbf{u}}
\renewcommand{\v}{\mathbf{v}}
\renewcommand{\j}{\mathbf{j}}
\renewcommand{\k}{\mathbf{k}}
\newcommand{\F}{\mathbf{F}}
\newcommand{\C}{\mathbf{C}}
\newcommand{\n}{\mathbf{n}}
\newcommand{\zerov}{\mathbf{0}}
\newcommand{\f}{\mathbf{f}}
\newcommand{\V}{\mathbf{V}}
\newcommand{\W}{\mathbf{W}}
\renewcommand*\env@matrix[1][\arraystretch]{%
	\edef\arraystretch{#1}%
	\hskip -\arraycolsep
	\let\@ifnextchar\new@ifnextchar
	\array{*\c@MaxMatrixCols c}}
\begin{document}
	
		\begin{frontmatter}
		\title{Structure-preserving and helicity-conserving finite element approximations and preconditioning for the Hall MHD equations}
        \tnotetext[mytitlenote]{This work was funded by the Engineering and Physical Sciences Research Council (grant numbers EP/V001493/1, EP/R029423/1, and EP/W026163/1) and by the EPSRC Centre for Doctoral Training in Partial Differential Equations: Analysis and Applications, grant EP/L015811/1. KH was funded by a Hooke Fellowship.}
		\author{Fabian~Laakmann\corref{mycorrespondingauthor}}
		\cortext[mycorrespondingauthor]{Corresponding author}
		\ead{fabian.laakmann@maths.ox.ac.uk}
		
		\author {Kaibo~Hu}
		\ead{kaibo.hu@maths.ox.ac.uk}
		
		\author {Patrick~E.~Farrell}
		\address{Mathematical Institute, University of Oxford, Oxford, UK}
		\ead{patrick.farrell@maths.ox.ac.uk}

		\begin{abstract}
			We develop structure-preserving finite element methods for the incompressible, resistive Hall magnetohydrodynamics (MHD) equations. These equations incorporate the Hall current term in Ohm's law and provide a more appropriate description of fully ionized plasmas than the standard MHD equations on length scales close to or smaller than the ion skin depth. We introduce a stationary discrete variational formulation of Hall MHD that enforces the magnetic Gauss's law exactly (up to solver tolerances) and prove the well-posedness and convergence of a Picard linearization. For the transient problem, we present time discretizations that preserve the energy and magnetic and hybrid helicity precisely in the ideal limit for two types of boundary conditions. Additionally, we present an augmented Lagrangian preconditioning technique for both the stationary and transient cases. We confirm our findings with several numerical experiments.
		\end{abstract}
		\begin{keyword}
			Hall magnetohydrodynamics, Helicity, Structure-preserving, Finite element, Preconditioners
		\end{keyword}
	\end{frontmatter}




   
\section{Introduction}   
We consider finite element methods for the solution of the incompressible, resistive Hall magnetohydrodynamics (MHD) equations. The stationary formulation on a bounded polyhedral Lipschitz domain $\Omega \subset \mathbb{R}^3$ is given by
\begin{subequations}
	\label{eq:HallMHD}
\begin{align} \label{eq:HallMHDu}
	- \Re^{-1} \Delta \u + 
 ( \u \cdot \nabla) \u
- \S\, \j \times \B
+ \nabla p
&= \f , \\\label{eq:HallMHDj}
 \j - 
  \nabla \times \B
&= \mathbf{0} , \\\label{eq:HallMHDB}
\nabla \times \E &= \mathbf{0} , \\
 \nabla \cdot \B &= 0, \\
 \nabla \cdot \u &= 0, \\
 \label{eq:HallMHDE}
\Reminv \j - (\E + \u \times \B-\RH\,\j\times \B) & = \mathbf{0} .
\end{align}
\end{subequations}
Here, $\u$ is the fluid velocity, $p$ the fluid pressure, $\j$ the current density, $\E$ the electric field and $\B$ the magnetic field. $\Re$ denotes the fluid Reynolds number, $\Rem$ the magnetic Reynolds number, $S$ the coupling number, $\RH$ the Hall coefficient, and $\f: \Omega \to \R^3$ a source term. When the Hall current term $\RH\,\j\times \B$ vanishes, one obtains the well-known resistive MHD system \cite{Gerbeau2006}.
For time-dependent problems, the time derivatives $\frac{d}{dt}\u$ and $\frac{d}{dt}\B$ are added to the left-hand sides of \eqref{eq:HallMHDu} and \eqref{eq:HallMHDB} respectively. 
We mainly consider the boundary conditions
\begin{equation}
\label{boundary_cond}
\u = \mathbf{0}, \quad \B \cdot \n = 0, \quad \E \times \n = \mathbf{0}, \quad \j \times \n = \mathbf{0}, \quad \text{on $\partial \Omega$},
\end{equation}
where $\n$ is the unit normal vector of $\partial \Omega$.
However, a treatment of the alternative boundary conditions (c.f., \cite{gunzburger1991existence})\begin{align}
\label{boundary_cond2}
\u = \mathbf{0}, \quad \B \times \n = \mathbf{0}, \quad \E \cdot \n = 0, \quad \j \times \n = \mathbf{0}, \quad \text{on $\partial \Omega$}
\end{align}
 is also possible.

The inclusion of the Hall effect provides a more appropriate description of fully ionized plasmas than standard MHD models on length scales close to or smaller than the ion skin depth \cite{Galtier2015}. On these length scales the Hall MHD equations take into account the different motions of ions and electrons in a two-fluid approach. While the electron motion is frozen to the magnetic field in this regime, it remains to solve for the ion fluid velocity $\u$ \cite{Huba2003}. 
The Hall MHD equations can be used to describe many important plasma phenomena, such as magnetic reconnection processes \cite{Forbes1991, Morales2005}, the expansion of sub-Alfv\'{e}nic plasma \cite{Ripin1993} and the dynamics of Hall drift waves and Whistler waves \cite{Huba2003}.  

The essence of the Hall effect is described by adding the Hall-term $\j \times \B$ in the generalized Ohm's law \cite[Section 2.2.2]{Galtier2015}
\begin{equation}
	\eta \j = \E + \u \times \B - \frac{1}{n e} \j \times \B,
\end{equation}
where $\eta$ denotes the magnetic resistivity, $n$ the charge density and $e$ the electron charge. The non-dimensionalized form of the generalized Ohm's law corresponds to \eqref{eq:HallMHDE} where the Hall parameter $\RH$ is defined as 
\begin{equation}
	\RH = \frac{1}{\mu_0 n e} \frac{\overline{B}}{L \overline{U}}
\end{equation}
for a characteristic length $L$, magnetic field strength $\overline{B}$ and speed $\overline{U}$ of the fluid, and the vacuum permeability $\mu_0$. We refer to the case $\RH = 0$ as the standard MHD equations.

Several analytical results for the continuous Hall MHD problem \cite{Chae2014, Danchin2020} and computational results of physical simulations \cite{Gmez2008, Chacn2003, Tth2008} are available in the literature. However, little attention has been paid to provide well-posedness and convergence results for the numerical approximation of these equations. 
We aim to contribute to this field by introducing a variational formulation and structure-preserving discretization for the stationary and time-dependent Hall MHD equations and proving a well-posedness and convergence result for a Picard linearization of this formulation. We next construct numerical schemes that preserve the energy, magnetic helicity and hybrid helicity precisely in the ideal limit of $\Re=\Rem=\infty$.
Finally, we investigate parameter-robust preconditioners for the efficient solution of the arising linear systems.

Although \eqref{eq:HallMHD} only differs by one term from the standard MHD equations, the extension of existing theory and algorithms to the Hall case is non-trivial. Most formulations of the standard MHD equations use Ohm's law \eqref{eq:HallMHDE} to eliminate $\j$ as an unknown; with $\RH \neq 0$ this is no longer possible. Therefore our proposed variational formulation for the stationary problem includes as unknowns both the current density $\j$ and the electric field $\E$.
In the time-dependent case, the various conservation properties of the MHD system in the ideal limit are based upon the symmetries of the system; the introduction of the Hall term changes these symmetries, thus making it substantially more difficult to construct numerical methods that preserve several quantities simultaneously.
 Finally, the development of preconditioning techniques becomes more difficult as an additional non-symmetric term with a non-trivial kernel enters the system.

Most variational formulations of the standard MHD system either look for the magnetic field $\B$ in an $\Hc$- or $\Hd$-conforming space. $\Hc$-conforming formulations have the advantage that they usually include the fewest unknown variables, typically $\u$, $p$, $\B$ and a Lagrange multiplier for the enforcement of the magnetic Gauss's law. However, such formulations only enforce the magnetic Gauss's law weakly, which can cause problems for numerical approximations \cite{Brackbill1980}. Therefore, in recent years much interest has been paid to structure-preserving $\Hd$-conforming approximations that enforce $\nabla \cdot \B = 0$ precisely on the discrete level \cite{Hu20162,hu2019structure}. These formulations either eliminate $\E$ or $\j$ with help of \eqref{eq:HallMHDE} or \eqref{eq:HallMHDj}. Here, the augmented Lagrangian formulation in \cite{hu2020convergence} seems a natural approach, as it only includes $\u,p,\B$ and $\E$ as unknowns and enforces $\nabla \cdot \B = 0$ without the need for a Lagrange multiplier. Our proposed formulation with both $\j$ and $\E$ as unknowns tries to use the fewest number of unknown variables for the Hall system while still enforcing the magnetic Gauss's law precisely.

Another way of enforcing $\nabla \cdot \B = 0$ for the incompressible MHD system is to use formulations based on the vector potential $\mathbf{A}$ where $\B=\nabla \times \mathbf{A}$ (see, e.g., \cite{adler2018vector, hiptmair2018fully, pagliantini2016computational}). The Hall term is $\j\times \B=(\nabla\times \B)\times \B=(\nabla\times \nabla\times \mathbf{A})\times (\nabla\times \mathbf{A})$, which is a high order term in $\mathbf{A}$. It seems difficult to deal with this term with the magnetic potential and we will not pursue potential-based formulations in this work. 


The ideal limit in Hall MHD describes the case of vanishing magnetic resistivity $\eta$. We also include the case of vanishing fluid viscosity $\nu$ in this notion and hence the ideal limit formally corresponds to $\Re=\Rem=\infty$. It is well-known that in this case the energy, magnetic helicity and cross helicity are conserved properties of the standard MHD system \cite{Galtier2015}. For the ideal Hall MHD system, the cross helicity is not conserved any more; instead the so-called hybrid helicity \cite{Mininni_2003} is conserved, which is a suitable combination of magnetic, cross and fluid helicity. In \cite{gawlik2020} and \cite{hu2021helicity}, the authors propose numerical algorithms that preserve the conservative properties of the standard MHD equations precisely on the discrete level. We extend their work for the additional Hall term and propose algorithms that also preserve the hybrid helicity precisely.

Helicity characterises the linkage of field lines (the vortex lines for the fluid helicity, the magnetic lines for the magnetic helicity etc.), and is thus fundamentally important for the flow kinematics \cite{moffatt1992helicity}. 
The importance of the magnetic and cross helicity can be found in, e.g., \cite{taylor1974relaxation,pariat2005photospheric,perez2009role} and the references therein. 
Even in the non-ideal case, i.e., for non-vanishing resistivity, the total helicity is approximately preserved if the magnetic fields undergoes small-scale turbulence \cite[Remark 7.19]{arnold1999topological}. Hence, algorithms that preserve the helicity and other quantities precisely (or nearly in the non-ideal case) at the discrete level are important and can lead to more physical solutions for the same resolution.

The development of preconditioning strategies for the standard MHD equations is a field of active research. Common approaches are either based on block preconditioners \cite{laakmann2021, Phillips2016, Wathen2020} or fully-coupled multigrid methods \cite{adler2020monolithic, shadid2016scalable}. The solver proposed in \cite{laakmann2021} achieves good robustness with respect to the Reynolds and coupling numbers. Their approach is based on a Schur complement approximation of the resulting block system and the use of parameter-robust multigrid methods for the electromagnetic and hydrodynamic blocks. We extend this approach to the Hall MHD system.
The range of the Hall parameters is typically between 0 and 1 and there exists many numerical simulations in the literature that consider the effect of different values of the Hall parameters in this range \cite{Donato2012, Morales2005, Shi2019}.

The remainder of this work is outlined as follows. In Section \ref{sec:vatiational-formulation}, we derive a variational formulation of the stationary Hall MHD system and prove the well-posedness of a Picard linearization. In Section \ref{sec:timedepproblems}, we derive time discretizations for the transient problem that preserve the energy, magnetic and hybrid helicity precisely in the ideal limit. An augmented Lagrangian preconditioner for the Hall MHD system is derived in Section \ref{sec:ALP}. Finally, we present numerical results in Section \ref{sec:numericalresults}, which include iterations numbers for a lid-driven cavity problem, the simulation of magnetic reconnection for an island coalescence problem and a numerical verification of the conservation properties for our algorithms in the ideal limit.

\section{Stationary variational formulation, linearization and discretization}\label{sec:vatiational-formulation}

\subsection{Preliminaries and notation}
We assume that $\Omega$ is a bounded Lipschitz polyhedron. For the ease of exposition, we further assume that $\Omega$ is contractible. 
We use $(\cdot,\cdot)$ and $\|\cdot\|$ to denote the $L^2(\Omega)$ inner product and norm. The dual pairing between an $H^{-1}$ (with norm $\|\cdot\|_{-1}$) and $H^1$ (with norm $\|\cdot\|_{1}$) function is denoted as $\langle \cdot , \cdot \rangle$.
We define the function spaces
$$
H_{0}^1(\Omega):=\left\{v\in H^{1}(\Omega): v=0  \mbox{ on } \partial\Omega =0\right \},
$$
$$
H_0(\curl,\Omega):=\{\bm{v}\in H(\curl, \Omega), \bm{v}\times \bm{n}=0 \mbox{ on } \partial\Omega\},
$$
$$
H_0(\div,\Omega):=\{\bm{w}\in H(\div, \Omega), \bm{w}\cdot \bm{n}=0 \mbox{ on } \partial\Omega\},
$$
and
$$
L^2_0(\Omega):=\left\{v\in L^2(\Omega):  \int_\Omega v=0 \right\}.
$$
We may drop the domain $\Omega$ in the notation of the function spaces if it is obvious which domain we consider.
 We use the finite element de~Rham sequence
\begin{equation}
\label{eqn:derhamfe}
	\begin{tikzcd}
		0\arrow{r} & H^{h}_{0}(\grad) \arrow{r}{\grad} & H^{h}_{0}(\curl) \arrow{r}{\curl} &H^{h}_{0}(\div) \arrow{r}{\div} & L^{2}_{h} \arrow{r}{}&0, 
	\end{tikzcd}
\end{equation}
to discretize the variables, where $H^{h}_{0}(D)\subset H_{0}(D), ~D = \grad, \curl, \div$ are conforming finite element spaces, see e.g.~Arnold, Falk, Winther
\cite{Arnold.D;Falk.R;Winther.R.2006a,Arnold.D;Falk.R;Winther.R.2010a},
Hiptmair \cite{Hiptmair.R.2002a}, Bossavit \cite{Bossavit.A.1998a} for
more detailed discussions on discrete differential forms. 
There are families of finite element de~Rham complexes \eqref{eqn:derhamfe} with any degree. In the schemes presented below, we require that $\E_h, \j_h \in \Hhc $ and $\B_h \in \Hhd $, i.e.~that they are drawn from the same sequence.
We define $\tilde\nabla_{h}\times  : [L^{2}(\Omega)]^{3}\to \Hhc$ by
$$
(\tilde \nabla_{h}\times \B_h, \k_{h})=(\B_h, \nabla\times \k_{h}), \quad\forall\, \k_{h}\in \Hhc.
$$
We denote the finite element spaces used for the velocity $\bm{u}_h$ and pressure $p_h$
by $\bm{V}_{h}$ and $Q_{h}$ respectively,
and assume that the choice is inf-sup stable
\cite{Girault.V;Raviart.P.1986a}.

We regularly use the generalised Gaffney inequality
\begin{equation}
	\|\B_h\|_{L^{3+\delta}} \leq \|\tilde{\nabla} \times \B_h\| + \|\nabla \cdot \B_h\| \quad \forall \ \B_h \in H^{h}_{0}(\div,\Omega)
\end{equation}
for  $0\leq \delta \leq3$, where $\delta$ depends on the regularity of $\Omega$. For a proof, we refer to \cite[Theorem 1]{he2019generalized} and references therein. 

The vorticity is often denoted as $\bm \omega = \nabla \times \u$.

{}

\subsection{Nonlinear scheme}
\label{sec_scheme}

We propose the following variational form for the stationary problem  \eqref{eq:HallMHD} with boundary conditions (\ref{boundary_cond}).
Define $\bm{X}_{h}:=\bm{V}_{h}\times Q_h \times H_{0}^{h}(\curl)\times H_{0}^{h}(\div)\times H_{0}^{h}(\curl)$.
\begin{problem}\label{prob:continuous}
Find $(\u_h, p_h, \E_h, \B_h, \j_h)\in \bm{X}_{h}$, such that for any $(\v_h, q_h, \F_h, \C_h, \k_h)\in \bm{X}_{h} $,
\begin{subequations}\label{fem-discretization}
\begin{align}\label{fem-1}
	\Re^{-1} (\nabla \u_h, \nabla \v_h)  
  +(( \u_h \cdot \nabla) \u_h, \v_h)
  - \S (\j_h\times \B_h,\v_h ) - (p_h,\nabla\cdot \v_h) 
 &= \langle \f,\v_h \rangle,\\
\label{fem-2}
(\j_h,\F_h) -( \B_h, \nabla\times \F_h) &=0,\\
\label{fem-3}
 (\nabla\times \E_h, \C_h) +(\nabla\cdot \B_h, \nabla\cdot \C_h)&= 0,\\ 
\label{fem-4}
 \Reminv(\j_{h}, \k_{h})-(\E_{h}+\u_{h}\times \B_{h}-\RH\,\j_{h}\times \B_{h}, \k_{h})&=0,\\
\label{fem-5}
 -(\nabla\cdot \u_h, q_h) &=0.
 \end{align}
\end{subequations}
\end{problem} 
  
The above formulation includes the weak form of the augmented Lagrangian term $-\nabla \nabla \cdot \B_h$ in \eqref{fem-3}, which is used to enforce the magnetic Gauss's law $\nabla \cdot \B_h=0$ precisely.
We summarize some properties of the variational formulation in the next theorem.
\begin{theorem}\label{stability_fem}
Any solution for  Problem \ref{prob:continuous} satisfies
\begin{enumerate}
\item magnetic Gauss's law:
$$
\nabla\cdot \B_h=0,
$$
\item  stationary Faraday's law:
$$
\nabla\times \E_h =\mathbf{0},
$$
\item energy estimates:
\begin{align}\label{energy-1}
{\Re^{-1}}\|\nabla\u_h\|^{2}+\Reminv \S\|\j_h\|^{2} &= \langle \f, \u_h\rangle, \\
\label{energy-2}
\frac{1}{2}\Re^{-1}\|\nabla\u_h\|^{2}+\Reminv \S\|\j_h\|^{2} &\leq \frac{\Re}{2}\|\f\|_{-1}^{2}.
\end{align}
\end{enumerate}
\end{theorem}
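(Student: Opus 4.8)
The plan is to extract all four conclusions by testing the five equations of Problem~\ref{prob:continuous} with carefully chosen functions, exploiting the exactness of the finite element de~Rham complex \eqref{eqn:derhamfe} together with the algebraic identities of the scalar triple product.

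First I would establish the stationary Faraday law. Since $\E_h \in \Hhc$, the complex \eqref{eqn:derhamfe} guarantees that $\nabla\times\E_h \in \Hhd$, so it is an admissible test function in \eqref{fem-3}. Choosing $\C_h = \nabla\times\E_h$ and using $\nabla\cdot(\nabla\times\E_h)=0$ annihilates the augmented Lagrangian term and leaves $\|\nabla\times\E_h\|^2=0$, giving conclusion (2). With $\nabla\times\E_h=\mathbf{0}$ in hand, \eqref{fem-3} collapses to $(\nabla\cdot\B_h,\nabla\cdot\C_h)=0$ for all $\C_h\in\Hhd$; taking $\C_h=\B_h$ then yields $\|\nabla\cdot\B_h\|^2=0$, which is conclusion (1).

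For the energy identity \eqref{energy-1} I would test \eqref{fem-1} with $\v_h=\u_h$, \eqref{fem-5} with $q_h=p_h$, and \eqref{fem-4} with $\k_h=\S\j_h$, and add the results. The pressure term $(p_h,\nabla\cdot\u_h)$ cancels against \eqref{fem-5}, and the convective term vanishes by the usual skew-symmetry argument: since $\u_h=\mathbf{0}$ on $\partial\Omega$ one has $((\u_h\cdot\nabla)\u_h,\u_h)=-\tfrac12(\nabla\cdot\u_h,|\u_h|^2)$, which is zero for the exactly divergence-free velocity produced by the scheme. The Hall contribution $\S\RH(\j_h\times\B_h,\j_h)$ drops out because $\j_h\times\B_h\perp\j_h$, and the motional coupling cancels between the momentum and Ohm equations via the triple-product identity $(\j_h\times\B_h,\u_h)=-(\u_h\times\B_h,\j_h)$. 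What remains is the electric term $\S(\E_h,\j_h)$.

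The crux, and the step I expect to be the main obstacle, is showing $(\E_h,\j_h)=0$. Here I would invoke conclusion (2): since $\nabla\times\E_h=\mathbf{0}$, $\E_h\in\Hhc$, and $\Omega$ is contractible, exactness of \eqref{eqn:derhamfe} provides $\phi_h\in H^h_0(\grad)$ with $\E_h=\nabla\phi_h$. Testing \eqref{fem-2} with $\F_h=\nabla\phi_h\in\Hhc$ then gives $(\j_h,\nabla\phi_h)=(\B_h,\nabla\times\nabla\phi_h)=0$, hence $(\E_h,\j_h)=0$. This is precisely where drawing $\E_h$, $\j_h$, and $\B_h$ from the same complex is essential. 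Combining all cancellations yields \eqref{energy-1}. Finally, \eqref{energy-2} follows by bounding $\langle\f,\u_h\rangle\le\|\f\|_{-1}\|\nabla\u_h\|$ and applying Young's inequality with weight $\Re$ to absorb $\tfrac12\Re^{-1}\|\nabla\u_h\|^2$ into the left-hand side.
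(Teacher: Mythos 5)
Your proposal is correct and follows essentially the same route as the paper: the paper tests \eqref{fem-3} with $\C_h=\nabla\times\E_h$ and then $\C_h=\B_h$ for conclusions (2) and (1), and obtains the energy law by testing \eqref{fem-4} with $\k_h=\j_h$ (deferring the remaining cancellations to the cited reference for standard MHD, since the Hall term $(\j_h\times\B_h,\j_h)$ vanishes). The details you supply that the paper leaves implicit --- in particular the crucial identity $(\E_h,\j_h)=0$ obtained by writing $\E_h=\nabla\phi_h$ via exactness of the discrete de~Rham complex and testing \eqref{fem-2} with $\F_h=\nabla\phi_h$ --- are exactly the argument of that reference, so nothing is missing.
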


\begin{proof} 

As in \cite{hu2020convergence}, the stationary Faraday's law $\nabla\times \E_{h}=\mathbf{0}$ follows from testing \eqref{fem-3} with $\C_h = \nabla\times \E_h$, and the magnetic Gauss' law $\nabla\cdot \B_h = 0$ then follows from testing \eqref{fem-3} with $\C_h = \B_h$.
The proof of the energy law follows from testing \eqref{fem-4} with $\j_h$. Since the additional Hall term $\RH\,(\j_{h}\times \B_{h}, \k_{h})$ vanishes for $\k_h=\j_h$, the proof coincides with the one in \cite{hu2020convergence} for the standard MHD system.
 \end{proof}

\subsection{Picard iteration}

In the following, we propose a Picard-type iteration for Problem \ref{prob:continuous}. For MHD models, Picard-type iterations have the advantage that they allow rigorous well-posedness proofs, c.f.\ \cite{hu2020convergence}. In this section, we extend these proofs for the additional Hall term. The well-posedness of the full Newton linearization is much more difficult to achieve or even unknown for certain MHD formulations. However, they often show better nonlinear convergence in practice, especially in the regime of high magnetic Reynolds numbers, see \cite{laakmann2021}. In Section \ref{sec:numericalresults}, we report numerical results for both linearization types. 

\begin{algorithm}[Picard step]
\label{alg:picard-s}
Given $(\u_{h}^{n-1},\B_{h}^{n-1})$,   find $(\u_{h}^{n}, p_h^n, \E_{h}^{n}, \B_{h}^{n}, \j_{h}^{n})\in \bm{X}_{h}$, such that for any $(\v_{h}, q_h, \F_{h}, \C_{h}, \k_{h})\in \bm{X}_{h} $,
\begin{subequations}
\begin{align}
\label{picard1}
  {\Re^{-1}} (\nabla \u^{n}_{h}, \nabla \v_{h}) 
  + (( \u^{n-1}_h \cdot \nabla) \u^{n}_h, \v_h)
  - \S (\j_{h}^{n}\times \B^{n-1}_{h},\v_{h} ) - (p_{h}^{n},\nabla\cdot \v_{h}) 
& = \langle \f,\v_{h} \rangle,\\
\label{picard2} 
 (\j_{h}^{n},\F_{h}) - ( \B_{h}^{n}, \nabla\times \F_{h}) &=0, \\
\label{picard3}
(\nabla\times \E_{h}^{n}, \C_{h}) +(\nabla\cdot\B_{h}^n, \nabla\cdot \C_{h})&= 0, \\ 
\label{picard4} 
\Reminv  (\j_{h}^{n}, \k_{h}
 )-(\E^{n}_{h}+\u^{n}_{h}\times \B^{n-1}_{h}-\RH\,\j_{h}^{n}\times \B^{n-1}_{h}, \k_{h})&=0,
\\\label{picard5}
 -(\nabla\cdot \u^{n}_{h}, q_{h}) &=0.
\end{align}
\end{subequations}
\end{algorithm} 

\begin{algorithm}[Newton iteration]\label{alg:newton-s}
	The Newton iteration includes the additional terms $(( \u^{n}_h \cdot \nabla) \u^{n-1}_h, \v_h) + S (\j_h^{n-1} \times \B_h^n, \v_h)$ on the left-hand side of \eqref{picard1}, and $-(\u_h^{n-1}\times \B_h^n, \k_h) + \RH\,(\j_h^{n-1}\times \B_h^n, \k_h)$ on the left hand side of \eqref{picard4}.
\end{algorithm}

\begin{remark}
	By construction, any solution $(\u^n_h, p^n_h, \E^n_h, \B^n_h, \j^n_h)$ of Algorithm \ref{alg:picard-s} also fulfils $(1)$, $(2)$, and $(3)$ from Theorem \ref{stability_fem} precisely.
\end{remark}

We will use the Brezzi theory \cite{Brezzi.F.1974a} to prove the well-posedness of the Picard iteration. 
We recast Algorithm \ref{alg:picard-s} as follows.
We first formally eliminate the variables $\j_{h}^{n}$ and $\E_{h}^{n}$ from the system by 
\begin{equation}\label{jE}
\j_{h}^{n}=\tilde{\nabla}_{h}\times \B_{h}^{n}, \quad \E_{h}^{n}=\Reminv\tilde{\nabla}_{h}\times \B_{h}^{n}-\mathbb{Q}_{c}(\u_{h}^{n}\times \B_{h}^{n-1})+\RH\, \mathbb{Q}_{c}((\tilde{\nabla}_{h}\times \B_{h}^{n})\times  \B_{h}^{n-1}),
\end{equation}
where $\mathbb{Q}_{c}$ is the $L^{2}$ projection to $H^{h}_{0}(\curl, \Omega)$. Then \eqref{picard1}-\eqref{picard5} becomes 
\begin{subequations}
\begin{align}
\label{picard-reduced1}
  {\Re^{-1}} (\nabla \u^{n}_{h}, \nabla \v_{h}) 
  + (( \u^{n-1}_h \cdot \nabla) \u^{n}_h, \v_h)
  - \S ((\tilde{\nabla}_{h}\times \B_{h}^{n})\times \B^{n-1}_{h},\v_{h} ) - (p_{h}^{n},\nabla\cdot \v_{h}) 
 &= \langle \f,\v_{h} \rangle,\\
\label{picard-reduced2}
\Reminv(\tilde{\nabla}_{h}\times \B_{h}^{n}, \tilde{\nabla}_{h}\times \C_{h} )-(\u_{h}^{n}\times \B_{h}^{n-1}, \tilde{\nabla}_{h}\times \C_{h} )\qquad \qquad  \qquad \qquad \qquad \qquad \quad &\nonumber\\ +\RH\,(\tilde{\nabla}_{h}\times \B_{h}^{n})\times  \B_{h}^{n-1}, \tilde{\nabla}_{h}\times \C_{h} ) +(\nabla\cdot\B_{h}^n, \nabla\cdot \C_{h})&= 0, \\ 
\label{picard-reduced3}
 -(\nabla\cdot \u^{n}_{h}, q_{h}) &=0.
\end{align}
\end{subequations}

Define $\bm{W}_{h}:=\bm{V}_{h}\times H_{0}^{h}(\div)$. Given $(\u^{-}, \B^{-})\in \bm{W}_{h}$, for ${\bm{x}}=(\u, \B)$, ${\bm{y}}=(\v, \C)\in \bm{W}_{h}$ and $p, 
q\in Q_{h}$, we define the bilinear forms
\begin{align*}
a({\bm{x}}, {\bm{y}})
 :=&\Re^{-1}(\nabla \u, \nabla \v) + (( \u^{-} \cdot \nabla) \u, \v) - \S ((\tilde{\nabla}_{h}\times \B)\times \B^{-},\v) +(\nabla\cdot\B, \nabla\cdot \C)
\\
&+\Reminv( \tilde{\nabla}_{h}\times \B, \tilde{\nabla}_{h}\times\C)-(\u \times \B^{-}, \tilde{\nabla}_{h}\times \C)+\RH\,((\tilde{\nabla}_{h}\times \B)\times  \B^{-}, \tilde{\nabla}_{h}\times \C),
\end{align*}
$$
b(\bm{x}, q):=(\nabla\cdot \u, q). 
$$

The mixed form of the Picard step in Algorithm \ref{alg:picard-s}
can be written as: for $\bm{h}\in \bm{W}_{h}^{\ast} $ and $g\in Q_{h}^*$, 
find $({\bm{x}}, p)\in  \bm{W}_{h} \times Q_{h}$, such that for all
$({\bm{y}}, q) \in \bm{W}_{h} \times Q_{h}$,  
\begin{alignat}{3}
& a({\bm{x}}, \bm{y})+ b(\bm{y}, p)
&&=\langle \bm{h}, \bm{y} \rangle, \\
& b(\bm{x}, q)
&&=\langle g, q \rangle .
\label{brezzip}
\end{alignat}

Define the norms
\begin{equation}\label{stationary-BE-norm-X}
\|(\u,  \B)\|^{2}_{X}:=\|\nabla \u\|^{2}+\|\nabla\cdot \B\|^{2}+\|\tilde{\nabla}_{h}\times \B\|^{2},
\end{equation}
\begin{equation}\label{norm-Y}
\|p\|_{Q}:=\|p\|.
\end{equation}
We verify that $\|\cdot\|_{X}$ is a norm. Indeed, $\|(\u,   \B)\|_{X}^{2}$ is quadratic for $\bm x:=(\u,  \B)$. Moreover, when $\|(\u,   \B)\|_{X}=0$, we find $\u=\mathbf{0}$ (Poincar\'e inequality) and $\B=\mathbf{0}$ (generalized Poincar\'e inequality or the discrete Gaffney inequality).

\begin{theorem}\label{thm:wellposed-picard}
Assume that $\B^{-}\in L^{\infty}(\Omega)$. Then, problem \eqref{brezzip} is well-posed with the norms defined by \eqref{stationary-BE-norm-X} and \eqref{norm-Y}.
\end{theorem}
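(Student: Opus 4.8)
The plan is to verify the hypotheses of Brezzi's theory \cite{Brezzi.F.1974a} for the saddle-point system \eqref{brezzip}: (i) boundedness of $a$ and $b$, (ii) the inf-sup condition for $b$, and (iii) coercivity of $a$ on the kernel $Z_h:=\{\bm{x}=(\u,\B)\in\bm{W}_{h}:(\nabla\cdot\u,q)=0\ \forall q\in Q_{h}\}$ of $b$. Since $\bm{W}_{h}\times Q_{h}$ is finite dimensional and $a$ is non-symmetric, I only need the kernel inf-sup condition in one slot: if $\sup_{\bm{y}\in Z_h}a(\bm{x},\bm{y})/\|\bm{y}\|_X\ge c\|\bm{x}\|_X$ for every $\bm{x}\in Z_h$, then the associated square matrix is injective, hence invertible, and the transposed condition follows automatically.

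For boundedness I would bound the terms of $a$ one at a time. The viscous, resistive and augmented-Lagrangian terms are immediately controlled in $\|\cdot\|_X$. For the convection term $((\u^{-}\cdot\nabla)\u,\v)$ I use Hölder together with the Sobolev embedding $H^1_0\hookrightarrow L^6$ and Poincar\'e's inequality to obtain a bound of the form $\|\u^{-}\|_{L^3}\|\nabla\u\|\,\|\nabla\v\|$, where $\|\u^{-}\|_{L^3}$ is a fixed constant. Each of the three coupling terms carries one factor of $\B^{-}$, and here the hypothesis $\B^{-}\in L^{\infty}(\Omega)$ is exactly what is required, e.g.\ $|((\tilde{\nabla}_h\times\B)\times\B^{-},\v)|\le\|\tilde{\nabla}_h\times\B\|\,\|\B^{-}\|_{L^{\infty}}\|\v\|$, and likewise for the remaining two. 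The form $b$ is trivially bounded. The inf-sup condition for $b$ is inherited directly from the assumed inf-sup stability of the Stokes pair $(\bm{V}_{h},Q_{h})$: testing with $\bm{x}=(\u,\mathbf{0})$ reduces the supremum over $\bm{W}_{h}$ to the one over $\bm{V}_{h}$, and $\|(\u,\mathbf{0})\|_X=\|\nabla\u\|$.

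The crux is coercivity on $Z_h$. Rather than testing $a$ with $\bm{x}$ itself, which leaves indefinite cross terms, I would use the energy-balanced test function $\bm{y}=(\u,\S\B)\in Z_h$. Writing $\j:=\tilde{\nabla}_h\times\B$, the two coupling contributions combine as $-\S(\j\times\B^{-},\u)-\S(\u\times\B^{-},\j)$, and the scalar triple-product identity $(\j\times\B^{-},\u)=-(\u\times\B^{-},\j)$ makes them cancel exactly. The convection term contributes $((\u^{-}\cdot\nabla)\u,\u)$, which vanishes after integration by parts since $\u=\mathbf{0}$ on $\partial\Omega$ and $\u^{-}$ is (exactly) divergence-free. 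Crucially, the new Hall term contributes $\S\,\RH(\j\times\B^{-},\j)$, which vanishes identically because $\j\times\B^{-}\perp\j$ pointwise. What remains is $a(\bm{x},\bm{y})=\Re^{-1}\|\nabla\u\|^2+\S\|\nabla\cdot\B\|^2+\S\,\Reminv\|\j\|^2\ge\gamma\|\bm{x}\|_X^2$ with $\gamma=\min(\Re^{-1},\S,\S\,\Reminv)$. Since $\|\bm{y}\|_X\le\max(1,\S)\|\bm{x}\|_X$, dividing yields the kernel inf-sup constant, and Brezzi's theorem then gives well-posedness.

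The main obstacle is precisely this coercivity step, namely controlling the two indefinite magnetohydrodynamic coupling terms together with the additional Hall term. The resolution is the scaled test function $(\u,\S\B)$, which exploits the antisymmetry of the Lorentz/induction coupling; the pleasant surprise is that the Hall term plays no adverse role here, dropping out because $\j$ is orthogonal to $\j\times\B^{-}$. The only place demanding care is the convection term, where I rely on $\u^{-}$ being genuinely divergence-free (as delivered by the structure-preserving, divergence-conforming velocity element); were only a discrete divergence constraint available, one would instead skew-symmetrize the convective form to recover the same cancellation.
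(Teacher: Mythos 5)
Your proof is correct and follows essentially the same route as the paper: verification of the Brezzi conditions, with the kernel estimate obtained from the scaled test function $(\u,\S\B)$ so that the Lorentz/induction coupling terms cancel by the triple-product identity, the Hall term vanishes since $\j\times\B^{-}\perp\j$, and the $L^{\infty}$ hypothesis on $\B^{-}$ is used exactly where the paper uses it to bound the Hall term. Your extra care about the one-sided kernel inf-sup condition sufficing in finite dimensions, and about the convection term requiring an exactly divergence-free $\u^{-}$ (or a skew-symmetrization), only makes explicit points the paper relegates to remarks.
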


\begin{proof}
To prove the well-posedness of \eqref{brezzip} based on the Brezzi theory, we need to verify the boundedness of each term, the inf-sup condition of $b(\cdot, \cdot)$ and the coercivity of $a(\cdot,\cdot)$ on the discrete kernel defined by 
$$
\bm{W}_{h}^{0}:=\{\bm x\in \bm{W}_{h}: ~(\nabla \cdot \u, q)=0 \quad \forall q\in Q_{h}\}.
$$
The boundedness of both bilinear forms is obvious from the definition of the norms. In particular, the Hall term fulfills
$$
|((\tilde{\nabla}_{h}\times \B)\times  \B^{-}, \tilde{\nabla}_{h}\times \C)|\leq \|\tilde{\nabla}_{h}\times \B\|\|\B^{-}\|_{L^{\infty}}\|\tilde{\nabla}_{h}\times \C\|.
$$
The inf-sup condition of $b(\cdot, \cdot)$ follows by assumption.
To prove coercivity on the kernel, we take
$\v=\u$ and $\C=S\B$, yielding
\begin{align*}
\bm a((\u,   \B), (\v,   \C))&= \Re^{-1}\|\nabla \u\|^{2}+ S\|\nabla\cdot \B\|^{2}+ S\Reminv \|\tilde{\nabla}_{h}\times \B\|^{2},
\end{align*}
and thus the coercivity of $a(\cdot, \cdot)$.
Combining the boundedness of the variational forms, the inf-sup condition of $b(\cdot, \cdot)$ and the coercivity of $a(\cdot,\cdot)$ on $\bm{W}_{h}^{0}$, we complete the proof.
\end{proof}
\begin{remark}
The assumption $\B^{-}\in L^{\infty}(\Omega)$ is due to the Hall term, since we do not have higher regularity for $\tilde{\nabla}_{h}\times \B$ and $\tilde{\nabla}_{h}\times \C$ than $L^2(\Omega)$. The other nonlinear terms can be controlled by $\|\cdot\|_{X}$ as, e.g., 
$$
|(\u \times \B^{-}, \tilde{\nabla}_{h}\times \C)|\leq \|\u\|_{L^{6}}\|\B^{-}\|_{L^{3}}\|\tilde{\nabla}_{h}\times \C\|\leq C\|\nabla \u\|(\|\tilde{\nabla}_{h}\times \B^{-}\|^{2}+\|\nabla\cdot \B^{-}\|^{2})^{\frac{1}{2}}\|\tilde{\nabla}_{h}\times \C\|,
$$
where we used the Poincar\'e inequality, the Sobolev embedding, and the discrete Gaffney inequality for the last step. On the discrete level, we always have that the finite element function $\B^{-}\in L^{\infty}(\Omega)$ and hence we have proved the well-posedness of the discrete problem on a fixed mesh.
\end{remark}

\begin{remark}
	In the above proof, we have used that $(( \u^{-} \cdot \nabla) \u, \u)=0$ which holds if $\nabla\cdot\u=0$ is enforced exactly on the discrete level. If one wishes to use a Stokes pair that is not exactly divergence-free, one can replace this term by $(( \u^{-} \cdot \nabla) \u, \v)-(( \u^{-} \cdot \nabla) \v, \u)$. This approximation is equal to  $(( \u^{-} \cdot \nabla) \u, \u)=0$ if $\nabla\cdot\u$ and a consistent approximation otherwise, cmp.~\cite{hu2020convergence}.
\end{remark}

\begin{remark}[Boundary conditions]
	For the standard MHD equations with $\u=\mathbf{0}$ and $\B \cdot \n = \mathbf{0}$ on $\partial \Omega$, the boundary conditions $\E \times \n = \mathbf{0}$ and $\j \times \n = \mathbf{0}$ are equivalent  due to Ohm's law $\j = \E + \u \times \B$. However, for the Hall MHD equations $\E \times \n = \mathbf{0}$ and $\j \times \n = \mathbf{0}$ are independent. The generalized Ohm's law then implies
	\begin{align}
		&\Reminv  \j \times \n = \E\times \n + (\u \times \B) \times \n - \RH\,(\j \times \B) \times \n\\
		\Leftrightarrow\  & \RH\,(\j \times \B) \times \n= \mathbf{0}\\
		\Leftrightarrow\  & \RH \left[(\j \cdot \n) \B - \j\, \B \cdot \n \right] = \mathbf{0}\\
		\Rightarrow\  & \j \cdot \n = 0.
	\end{align}
	Hence, there exists an additional compatibility condition that $\j \cdot \n = 0$. 
	
\end{remark}	

In the following, we consider the convergence of the Picard iteration. 
\begin{theorem} \label{thm:picard_convergence}
	For a fixed mesh drawn from a quasi-uniform sequence (so that the inverse estimates hold) and $f\in [H^{-1}]^{3}$, $\u_{h}^{n}$, $\E_h^n$, $\j_{h}^{n}$, $\B_{h}^{n}$ and $p_{h}^{n}$ from Algorithm \ref{alg:picard-s} converge if $\Rem$ and $\Re$ are small enough.  
\end{theorem}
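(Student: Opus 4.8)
The plan is to realise the iteration as a fixed-point map and prove it is a contraction on $\bm{W}_{h}$ in the norm $\|\cdot\|_{X}$, so that the Banach fixed-point theorem yields convergence. Let $\Phi:(\u^{n-1}_{h},\B^{n-1}_{h})\mapsto(\u^{n}_{h},\B^{n}_{h})$ denote the map defined by Algorithm~\ref{alg:picard-s}; it is well defined because on the fixed finite-dimensional space every discrete field lies in $L^{\infty}(\Omega)$, so $\B^{n-1}_{h}\in L^{\infty}$ and Theorem~\ref{thm:wellposed-picard} applies at each step. First I would record a uniform a priori bound. Testing \eqref{picard1} with $\v_{h}=\u^{n}_{h}$ and \eqref{picard4} with $\k_{h}=\j^{n}_{h}$, and using that the convection and Hall terms vanish on the diagonal together with $(\E^{n}_{h},\j^{n}_{h})=0$ (which follows as in Theorem~\ref{stability_fem} from $\nabla\cdot\B^{n}_{h}=0$), reproduces the energy identity $\Re^{-1}\|\nabla\u^{n}_{h}\|^{2}+S\,\Reminv\|\j^{n}_{h}\|^{2}=\langle\f,\u^{n}_{h}\rangle$. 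With Young's and Poincaré's inequalities this gives $\|\nabla\u^{n}_{h}\|\lesssim\Re\|\f\|_{-1}$ and $\|\tilde{\nabla}_{h}\times\B^{n}_{h}\|=\|\j^{n}_{h}\|\lesssim(\Re\,\Rem/S)^{1/2}\|\f\|_{-1}$, so the iterates stay in a ball whose radius shrinks as $\Re,\Rem\to0$.

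Next I would subtract two consecutive reduced systems \eqref{picard-reduced1}--\eqref{picard-reduced3}. Writing $\delta\bm{x}:=(\delta\u,\delta\B)=(\u^{n+1}_{h}-\u^{n}_{h},\,\B^{n+1}_{h}-\B^{n}_{h})$ and $\delta\bm{x}^{-}:=(\delta\u^{-},\delta\B^{-})=(\u^{n}_{h}-\u^{n-1}_{h},\,\B^{n}_{h}-\B^{n-1}_{h})$, and observing that the only difference between the two linear problems is the frozen coefficient, one obtains
\[
a_{(\u^{n}_{h},\B^{n}_{h})}(\delta\bm{x},\bm{y})+b(\bm{y},\delta p)=-\bigl(a_{(\u^{n}_{h},\B^{n}_{h})}-a_{(\u^{n-1}_{h},\B^{n-1}_{h})}\bigr)(\bm{x}^{n},\bm{y}),
\]
where the right-hand side is bilinear in $\delta\bm{x}^{-}$ and in the current iterate $\bm{x}^{n}=(\u^{n}_{h},\B^{n}_{h})$. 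Testing with $\bm{y}=(\delta\u,\,S\,\delta\B)$, the pressure term drops since $\delta\bm{x}\in\bm{W}^{0}_{h}$, and since $\nabla\cdot\delta\B=0$ exactly, the coercivity computation from the proof of Theorem~\ref{thm:wellposed-picard} produces $\min(\Re^{-1},S\,\Reminv)\,\|\delta\bm{x}\|_{X}^{2}$ on the left.

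The heart of the argument is to bound the right-hand side by $\|\delta\bm{x}^{-}\|_{X}\|\delta\bm{x}\|_{X}$ times a small constant. The convection difference $((\delta\u^{-}\cdot\nabla)\u^{n}_{h},\delta\u)$, and the Lorentz and coupling differences such as $S((\tilde{\nabla}_{h}\times\B^{n}_{h})\times\delta\B^{-},\delta\u)$ and $S(\u^{n}_{h}\times\delta\B^{-},\tilde{\nabla}_{h}\times\delta\B)$, are handled by Hölder's inequality with the Sobolev embedding $H^{1}_{0}\hookrightarrow L^{6}$ and the discrete Gaffney inequality $\|\cdot\|_{L^{3+\delta}}\lesssim\|\cdot\|_{X}$, and are thus bounded by $C(\|\nabla\u^{n}_{h}\|+\|\tilde{\nabla}_{h}\times\B^{n}_{h}\|)\|\delta\bm{x}^{-}\|_{X}\|\delta\bm{x}\|_{X}$ with $C$ mesh-independent. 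The Hall difference $\RH\,S((\tilde{\nabla}_{h}\times\B^{n}_{h})\times\delta\B^{-},\tilde{\nabla}_{h}\times\delta\B)$ is the main obstacle: it is trilinear with two factors only in $L^{2}$, so the middle factor $\delta\B^{-}$ must be measured in $L^{\infty}$. This is exactly where quasi-uniformity is used: the inverse estimate gives $\|\delta\B^{-}\|_{L^{\infty}}\le C_{h}\|\delta\B^{-}\|\le C'_{h}\|\delta\bm{x}^{-}\|_{X}$, bounding this term by $\RH\,C'_{h}\,\|\tilde{\nabla}_{h}\times\B^{n}_{h}\|\,\|\delta\bm{x}^{-}\|_{X}\|\delta\bm{x}\|_{X}$ with a mesh-dependent $C'_{h}$.

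Collecting these estimates gives $\|\delta\bm{x}\|_{X}\le\rho\,\|\delta\bm{x}^{-}\|_{X}$ with
\[
\rho=\frac{C}{\min(\Re^{-1},S\,\Reminv)}\bigl(\|\nabla\u^{n}_{h}\|+(1+\RH\,C'_{h})\,\|\tilde{\nabla}_{h}\times\B^{n}_{h}\|\bigr).
\]
Inserting the a priori bounds, every summand in $\rho$ carries a strictly positive power of $\Re$ and/or $\Rem$ (for example the convection contribution scales like $\Re^{2}\|\f\|_{-1}$, and the Hall contribution, despite the factor $C'_{h}$, still vanishes because $\|\tilde{\nabla}_{h}\times\B^{n}_{h}\|\to0$). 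Since the mesh is fixed, $C'_{h}$ is a fixed constant, so $\rho\to0$ as $\Re,\Rem\to0$ and in particular $\rho<1$ for $\Re,\Rem$ small enough. Then $\Phi$ is a contraction, $(\u^{n}_{h},\B^{n}_{h})$ is Cauchy in $\|\cdot\|_{X}$ and converges; $p^{n}_{h}$ converges by the inf-sup condition for $b(\cdot,\cdot)$, and $\j^{n}_{h},\E^{n}_{h}$ converge through the continuous reconstruction \eqref{jE}. The one genuinely new difficulty relative to standard MHD is the Hall term, which forces the $L^{\infty}$ control of $\B^{-}$ (hence the inverse estimate and the mesh-dependent constant), and the verification that its contribution to $\rho$ nevertheless vanishes in the small-$\Re,\Rem$ limit.
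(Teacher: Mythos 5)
Your proposal is correct and follows essentially the same route as the paper: a contraction estimate for successive differences in the (weighted) $X$-norm, built on the energy a priori bounds, the Sobolev embedding and generalised Gaffney inequality for the standard coupling terms, an inverse estimate on the fixed quasi-uniform mesh to control the Hall term (accepting a mesh-dependent contraction constant), and then the inf-sup condition and the reconstruction \eqref{jE} for $p_h^n$, $\E_h^n$, $\j_h^n$. The only difference is cosmetic: you apply the inverse estimate to the increment $\delta\B^{-}$ to place it in $L^{\infty}$ (cost $h^{-3/2}$), whereas the paper lifts the bounded factor $\j_h^{n-1}$ from $L^{2}$ to $L^{(6+2\delta)/(1+\delta)}$ and pairs it with the $L^{3+\delta}$ Gaffney bound on $\bm e_B^{n-1}$ (cost $h^{-3/(3+\delta)}$); both constants are harmless on a fixed mesh and both contributions vanish as $\Re,\Rem\to 0$.
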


The proof is similar to \cite[Theorem 7]{hu2019structure}, and we only give a sketch of the proof focusing on the additional Hall term. 
The essence of the proof is to show that one gets a contraction in the errors $\bm e_{u}^{n}:=\u_{h}^{n}-\u_{h}^{n-1}$ and $\bm e_{j}^{n}:=\tilde{\nabla}_{h}\times \B_{h}^{n}-\tilde{\nabla}_{h}\times \B_{h}^{n-1}$, i.e.,
\begin{equation}\label{contraction0}
\frac{1}{2}(	\Re^{-1}\|\nabla \bm e_{u}^{n}\|^{2}+S \Reminv\|\bm e_{j}^{n}\|^{2})\leq \frac{1}{4}(\Re^{-1}\|\nabla \bm e_{u}^{n-1}\|^{2}+S \Reminv\| \bm e_{j}^{n-1}\|^{2})
\end{equation}
 if $\Re$ and $\Rem$ are small enough. One gets an expression for these errors by subtracting the $(n-1)$-th step of 
\eqref{picard-reduced1}-\eqref{picard-reduced3} from the $n$-th step and using the test functions 
$\v_{h}=\bm e_{u}^{n}$  and $\C_{h}=\B_{h}$. 
This gives
\begin{equation}\label{contraction}
\Re^{-1}\|\nabla \bm e_{u}^{n}\|^{2}+S\Reminv \|\bm e_{j}^{n}\|^{2}=(\u_{h}^{n}\times \B_{h}^{n-1}-\u_{h}^{n-1}\times \B_{h}^{n-2}, \bm e_{j}^{n})+\cdots-\RH(\j_{h}^{n}\times \B_{h}^{n-1}-\j_{h}^{n-1}\times \B_{h}^{n-2}, \bm e_{j}^{n}).
\end{equation}
Here we have omitted other terms of the standard MHD system which are treated in detail in  \cite[Theorem 7]{hu2019structure}.
The last term is the Hall term. The first term can be estimated by 
\begin{align*}
|(\u_{h}^{n}\times \B_{h}^{n-1}-\u_{h}^{n-1}\times \B_{h}^{n-2}, \bm e_{j}^{n})|&=|(\bm e_{u}^{n}\times \B_{h}^{n-1},  \bm e_{j}^{n})+(\u_{h}^{n-1}\times  \bm e_{B}^{n-1},  \bm e_{j}^{n})|\\&
\leq C( \| \bm e_{u}^{n}\|_{L^6}\|\B_{h}^{n-1}\|_{L^3}\| \bm e_{j}^{n}\|+ \|\u_{h}^{n-1}\|_{L^6}\| \bm e_{B}^{n-1}\|_{L^3}\|\bm e_{j}^{n}\|)\\&
\leq C(\|\nabla \bm e_{u}^{n}\|^{2}+\|\bm e_{j}^{n}\|^{2}+\|\bm e_{j}^{n-1}\|^{2}),
\end{align*}
where in the last step we have used the Sobolev embedding $\| \bm e_{u}^{n}\|_{L^6}\leq C\|\nabla \bm e_{u}^{n}\|$, the generalised Gaffney inequality $\| \bm e_{B}^{n-1}\|_{L^3}\leq C\|\tilde{\nabla}\times \bm e_{B}^{n-1}\|=C\|\bm e_{j}^{n-1}\|$, and the energy bounds $\|\B_{h}^{n-1}\|_{L^3}\leq C\|\f\|_{-1}$, $\|\u_{h}^{n-1}\|\leq C\|\f\|_{-1}$ ($\|\f\|_{-1}$ is assumed to be a given finite number). For $\Re^{-1}$ and $\Reminv$ large enough, we can move $\|\nabla \bm e_{u}^{n}\|^{2}$ and $\|\nabla \bm e_{j}^{n}\|^{2}$ to the left hand side of \eqref{contraction}. 

The boundedness of the Hall term is more complicated. In fact, for some $0\leq\delta\leq 3$ depending on the domain, 
\begin{align*}
|(\j_{h}^{n}\times \B_{h}^{n-1}-\j_{h}^{n-1}\times \B_{h}^{n-2}, \bm e_{j}^{n})|&=|(\bm e_{j}^{n}\times \B_{h}^{n-1}, \bm e_{j}^{n})+(\j_{h}^{n-1}\times \bm e_{B}^{n-1}, \bm e_{j}^{n})|\\
&=|(\j_{h}^{n-1}\times \bm e_{B}^{n-1}, \bm e_{j}^{n})|\leq C\|\j_{h}^{n-1}\|_{L^{\frac{6+2\delta}{1+\delta}}}\|\bm e_{B}^{n-1}\|_{L^{3+\delta}}\|\bm e_{j}^{n}\|\\
& \leq  Ch^{-\frac{3}{3+\delta}}\|\j_{h}^{n-1}\|\|\bm e_{j}^{n-1}\| \|\bm e_{j}^{n}\|\\&
\leq  Ch^{-\frac{3}{3+\delta}}(\|\bm e_{j}^{n-1}\| ^{2}+\|\bm e_{j}^{n}\|^{2}),
\end{align*}
where we used the inverse estimate, the generalised Gaffney inequality and the energy bound $\|\j_{h}^{n-1}\|\leq C\|\f\|_{-1}$. Again, we move $\|\bm e_{j}^{n}\|^{2}$ to the left hand side of \eqref{contraction} if $\Re^{-1}$ and $\Reminv$ are large enough.
The contraction \eqref{contraction0} proves the convergence of $\u^n_h$ and $\j^n_h$. Note that the convergence of $\j^n_h$ also implies the convergence of $\B^n_h$ since $\| \bm e_{B}^{n}\|\leq C\|\tilde{\nabla}\times \bm e_{B}^{n}\|=C\|\bm e_{j}^{n}\|$.

To show the convergence of $p_{h}^{n}$, we note that from \eqref{picard-reduced1}, 
\begin{align*}
(p_{h}^{n}-p_{h}^{n-1}, \nabla\cdot \v_{h})=\Re^{-1}(\nabla \bm e_{u}^{n}, \nabla \v_{h})+((\bm e_{u}^{n-1}&\cdot \nabla)\u_{h}^{n}, \v_{h})+((\u_{h}^{n-2}\cdot \nabla)\bm e_{u}^{n}, \v_{h})\\&-\S (\bm e_{j}^{n}\times \B_{h}^{n-1}, \v_{h})-\S (\j_{h}^{n-1}\times \bm e_{B}^{n-1}, \v_{h}).
\end{align*}
From the inf-sup condition of the velocity-pressure pair, there exists $\v_{h}$ such that
$$
(p_{h}^{n}-p_{h}^{n-1}, \nabla\cdot\v_{h})\geq C\|p_{h}^{n}-p_{h}^{n-1}\|^{2}, \quad\mbox{and}\quad \|\v_{h}\|_{1}\leq \|p_{h}^{n}-p_{h}^{n-1}\|. 
$$
Taking this $\v_{h}$ as the test function, we get
\begin{align*}
C\|p_{h}^{n}-p_{h}^{n-1}\|^{2}\leq &\Re^{-1}\| \bm e_{u}^{n}\|_{1}\| \v_{h}\|_{1}+\|\bm e_{u}^{n-1}\|_{1}\|\u_{h}^{n}\|_{1} \|\v_{h}\|_{1}+\|\u_{h}^{n-2}\|_{1}\|\bm e_{u}^{n}\|_{1}\|\v_{h}\|_{1}\\&+\S \|\bm e_{j}^{n}\|\|\B_{h}^{n-1}\|_{L^{3}}\| \v_{h}\|_{1}+\S \|\j_{h}^{n-1}\|\| \bm e_{B}^{n-1}\|_{L^{3}}\| \v_{h}\|_{1}.
\end{align*}
Since $\u_{h}^{n}$ converges in $H^{1}(\Omega)$ and $\B_{h}^{n}$ converges in $L^{3}(\Omega)$ (alternatively, $\j_{h}^{n}=\tilde{\nabla}_{h}\times \B_{h}^{n}$ converges in $L^{2}(\Omega)$), we obtain the $L^{2}$-convergence of $p_{h}^{n}$ by the Cauchy-Schwarz inequality.

For the standard MHD equations, the convergence of the electric field $$\E_{h}^{n}=\Reminv\tilde{\nabla}_{h}\times \B_{h}^{n}-\mathbb{Q}_{c}(\u_{h}^{n}\times \B_{h}^{n-1})+\RH\,\mathbb{Q}_{c}((\tilde{\nabla}_{h}\times \B_{h}^{n})\times  \B_{h}^{n-1})$$ follows from the strong convergence of $\B_{h}^{n}$ in $H_0^h(\div)\cap H_{0}^{h}(\curl)\hookrightarrow L^{3+\delta}$  and $\u_{h}^{n}$ in $H^{1}(\Omega)\hookrightarrow L^{6}(\Omega)$. For the convergence of the Hall-term, we can apply the inverse estimate as before.

\begin{remark}
	For the standard MHD system, the condition on the size of $\Re^{-1}$ and $\Reminv$ only depends on $\|\f\|_{-1}$. Due to the Hall term, this condition also involves a factor $h^{-\frac{3}{3+\delta}}$ which might suggest that the convergence of the Picard iteration deteriorates on finer meshes. Theorem \ref{thm:picard_convergence} proves the convergence of the Picard iteration on a fixed mesh.
\end{remark}

\subsection{2.5D Hall MHD formulation}\label{sec:2.5Dform}
In this subsection, we introduce the 2.5-dimensional formulation of \eqref{eq:HallMHD}, which refers to the assumption that vector fields still have three components but derivatives in the $z$-direction vanish. That means we assume that a three-dimensional vector-field can be decomposed into a two-dimensional vector field and scalar field with the notation
\begin{equation}
	\B(x,y,z) = \begin{pmatrix}
		\tilde{\B} (x,y) \\
		B_3(x,y)
	\end{pmatrix}.
\end{equation}
Recall that there exist two different curl operators in two dimensions, \mbox{given by}
\begin{equation}
	\scurl \tilde{\B} = \partial_x B_2 - \partial_y B_1, \qquad  \vcurl B_3 = \begin{pmatrix}
		\partial_y B_3\\
		-\partial_x B_3
	\end{pmatrix}
\end{equation}
that correspond to the cross-products
\begin{equation}
	\tilde{\u} \times \tilde{\B} =u_1 B_2 - u_2 B_1, \qquad
	\tilde{\B}  \times E_3 =\begin{pmatrix}
		B_2 E_3 \\
		-B_1 E_3
	\end{pmatrix}.
\end{equation}
Hence, we can rewrite the three-dimensional cross-product and curl operator as
\begin{equation}
	\j \times \B = \begin{pmatrix}
		\tilde{\j} \times B_3 - \tilde{\B} \times j_3 \\
		\tilde{\j} \times \tilde{\B}
	\end{pmatrix} 
	\quad \text{ and } \quad
	\nabla \times \B = 
	\begin{pmatrix}
		\vcurl B_3 \\
		\scurl \tilde{\B}
	\end{pmatrix}.
\end{equation}

With this notation we are able to rewrite \eqref{eq:HallMHD} on a bounded polygonal Lipschitz domain $\Omega \subset \mathbb{R}^2$ as 

\begin{subequations}
	\label{eq:2.5DHallMHD}
	\begin{align} \label{eq:2.5DHallMHDu}
		- \Re^{-1} \Delta \tilde{\u} + 
		( \tilde{\u} \cdot \tilde{\nabla}) \tilde{\u}
		- \S\, ( \tilde{\j} \times B_3 -  \tilde{\B} \times j_3)
		+ \tilde{\nabla} p 
		&= \tilde{\f} , \\
		- \Re^{-1} \Delta u_3 + 
		( \tilde{\u} \cdot \tilde{\nabla}) u_3
		- \S\, \tilde{\j} \times \tilde{\B}
		&= f_3 , \\
		\tilde{\j} - 
		\vcurl B_3
		&= \mathbf{0} , \\
		j_3 - 
		\scurl \tilde{\B}
		&= 0 , \\
		\vcurl E_3 &= \mathbf{0} , \\
		\scurl \tilde{\E} &= 0 , \\
		\tilde{\nabla} \cdot \tilde{\B} &= 0, \\
		\tilde{\nabla} \cdot \tilde{\u} &= 0, \\
		\Reminv \tilde{\j} - (\tilde{\E} + \tilde{\u} \times B_3 - \tilde{\B} \times u_3 -\RH\,(\tilde{\j}\times B_3 - \tilde{\B} \times j_3)) & = \mathbf{0},  \label{eq:islandcoal-j}\\
		\Reminv j_3 - (E_3 + \tilde{\u} \times \tilde{\B}-\RH\,\tilde{\j}\times \tilde{\B}) & = 0  \label{eq:islandcoal-j3},
	\end{align}
\end{subequations}

subject to the boundary conditions
\begin{equation}
	\tilde{\u} = \mathbf{0}, \,\, u_3 = 0, \,\, \tilde{\B} \cdot \tilde{\n} = 0, \,\, B_3 = 0, \,\, \tilde{\j} \times \tilde{\n} =  \mathbf{0}, \,\, j_3 = 0, \,\, \tilde{\E} \times \tilde{\n} =  \mathbf{0}, \,\, E_3 = 0. 
\end{equation}
For a finite element discretization, as before we can look for $\tilde{\B}_h$ in an $H_{0}^{h}(\div)$-conforming space and for $\tilde{\j}_h$ and $\tilde{\E}_h$ in an $H_{0}^{h}(\curl)$-confirming space. The other components $u_3$, $B_3$, $j_3$ and $E_3$ are approximated in an $H_{0}^{h}(\grad)$-confirming space.

\section{Conservative discretizations for time-dependent problems}\label{sec:timedepproblems}

For time-dependent problems, we include the time derivatives in the formulation for the stationary problem, i.e., we add $\frac{d}{dt}\u_{h}$ to \eqref{fem-1} and $\frac{d}{dt}\B_{h}$ to \eqref{fem-3}. This means we can remove the $(\nabla\cdot \B_{h}, \nabla\cdot \C_{h})$ term, since the magnetic Gauss's law will be automatically preserved in the evolution provided the initial condition is divergence-free \cite{ma2016robust}.

\subsection{Conserved quantities}
In the ideal limit of $\Re=\Rem=\infty$ it is well-known that the energy, magnetic helicity and cross helicity are conserved properties of the standard incompressible MHD system \cite{Galtier2015}. The energy is defined as
	\begin{equation}
	        E := \int_\Omega |\u|^2 + S|\B|^2 \ \mathrm{d} x,
	\end{equation}
the magnetic helicity is defined as
	\begin{equation}
		H_M := \int_\Omega \mathbf{A} \cdot \B \ \mathrm{d} x
	\end{equation}
for a vector potential $\mathbf{A}$ such that $\nabla \times \mathbf{A} = \B$, and the cross helicity is defined as
	\begin{equation}
	    H_C := \int_\Omega \mathbf{\u} \cdot \B \ \mathrm{d} x.
	\end{equation}
For the ideal Hall MHD equations, the energy and magnetic helicity are still conserved, while the cross helicity is not. Here, hybrid helicity replaces the cross helicity as a conserved property and is defined as 
\begin{equation}
H_H := \int_\Omega (\mathbf{A} + \alpha \u)\cdot (\B + \beta \nabla \times \u) \, \mathrm{d}x
\end{equation}
 for $\alpha$ and $\beta$ satisfying the relation 
  \begin{equation}\label{eqn:alphabeta}
 	2S\alpha\beta-{\RH}(\alpha+\beta)=0.
 \end{equation}
We prove the conservation of hybrid helicity in the next theorem. Note, that the hybrid helicity is a combination of the magnetic, cross and fluid helicity, which is defined as 
\begin{equation}
	H_F := \int_\Omega \u \cdot \nabla \times \u \ \mathrm{d}x.
\end{equation} If ${\RH}=0$, i.e., when the Hall term vanishes, the above equality \eqref{eqn:alphabeta} holds if $\alpha=0$ or $\beta=0$. For $\alpha=\beta=0$,  the hybrid helicity is just the magnetic helicity. If $\alpha=0$ and  $\beta\neq 0$ (alternatively, $\alpha\neq 0$ and $\beta=0$), the hybrid helicity becomes a combination of magnetic and cross helicity. Thus the conservation of hybrid helicity implies the conservation of both magnetic and cross helicity in standard MHD. In Hall MHD, $\alpha=\beta=0$ still corresponds to the magnetic helicity. But in this case \eqref{eqn:alphabeta} does not allow the case $\alpha=0$, $\beta\neq 0$, or $\alpha\neq 0$, $\beta=0$. This means that the cross helicity is not conserved. There exist many non-trivial choices of $\alpha$ and $\beta$, for example, $\alpha=\beta = S^{-1}\RH$.

\begin{theorem}
The generalized hybrid helicity $H_{H}$ is conserved in the time-dependent Hall MHD system with $\f=\mathbf{0}$ and formally $\Re^{-1}=\Reminv =0$ for any $\alpha$, $\beta$ such that \eqref{eqn:alphabeta} holds.
\end{theorem}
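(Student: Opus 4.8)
The plan is to differentiate $H_H$ in time and show the derivative vanishes, with the relation \eqref{eqn:alphabeta} appearing exactly as the condition that cancels the only surviving term. First I would record the ideal equations with $\f=\mathbf{0}$: the momentum balance $\frac{d}{dt}\u + (\u\cdot\nabla)\u - \S\,\j\times\B + \nabla p = \mathbf{0}$, Amp\`ere's law $\j=\nabla\times\B$, and, substituting $\Reminv=0$ into \eqref{eq:HallMHDE} to obtain $\E=-\u\times\B+\RH\,\j\times\B$, Faraday's law $\frac{d}{dt}\B=-\nabla\times\E$. Writing $\B=\nabla\times\mathbf{A}$, the identity $\nabla\times(\tfrac{d}{dt}\mathbf{A}+\E)=\mathbf{0}$ yields a scalar potential $\phi$ with $\frac{d}{dt}\mathbf{A}=-\E+\nabla\phi$. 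Using $\int_\Omega\mathbf{A}\cdot(\nabla\times\u)=\int_\Omega\B\cdot\u$ (the boundary term vanishes because $\u=\mathbf{0}$ on $\partial\Omega$), I would rewrite
\begin{equation}
	H_H = \int_\Omega\mathbf{A}\cdot\B\,\mathrm{d}x + (\alpha+\beta)\int_\Omega\u\cdot\B\,\mathrm{d}x + \alpha\beta\int_\Omega\u\cdot(\nabla\times\u)\,\mathrm{d}x = H_M+(\alpha+\beta)H_C+\alpha\beta H_F,
\end{equation}
so it suffices to differentiate the magnetic, cross and fluid helicities separately.

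For $H_M$, inserting $\frac{d}{dt}\mathbf{A}$ and $\frac{d}{dt}\B$ and integrating by parts gives $\frac{d}{dt}H_M=-2\int_\Omega\E\cdot\B\,\mathrm{d}x$, which is zero because both $\u\times\B$ and $\j\times\B$ are orthogonal to $\B$; this reproduces the (Hall-independent) conservation of magnetic helicity. For $H_C$ and $H_F$ I would substitute the momentum equation and Faraday's law, use the identity $(\u\cdot\nabla)\u=(\nabla\times\u)\times\u+\tfrac12\nabla|\u|^2$, and repeatedly apply $\int_\Omega\u\cdot(\nabla\times\mathbf{X})=\int_\Omega(\nabla\times\u)\cdot\mathbf{X}$ (boundary terms again killed by $\u=\mathbf{0}$). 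The pressure and kinetic-energy gradients drop out against the divergence-free fields, and the purely ideal-MHD contributions cancel in pairs, leaving only the Hall contributions
\begin{equation}
	\frac{d}{dt}H_C = -\RH\int_\Omega\bm\omega\cdot(\j\times\B)\,\mathrm{d}x, \qquad \frac{d}{dt}H_F = 2\S\int_\Omega\bm\omega\cdot(\j\times\B)\,\mathrm{d}x,
\end{equation}
where $\bm\omega=\nabla\times\u$.

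Combining the three pieces,
\begin{equation}
	\frac{d}{dt}H_H = \big(2\S\alpha\beta-\RH(\alpha+\beta)\big)\int_\Omega\bm\omega\cdot(\j\times\B)\,\mathrm{d}x,
\end{equation}
and the bracket is exactly the left-hand side of \eqref{eqn:alphabeta}, so $\frac{d}{dt}H_H=0$ for any admissible $\alpha,\beta$. The main obstacle is the careful bookkeeping of the boundary integrals: every integration by parts must be shown to produce a vanishing surface term under \eqref{boundary_cond}. Most follow directly from $\u=\mathbf{0}$, $\B\cdot\n=0$ and $\E\times\n=\mathbf{0}$, but the fluid-helicity step needs the less obvious fact that $\bm\omega\cdot\n=0$ on $\partial\Omega$ (the tangential surface-curl of $\u$ vanishes because $\u=\mathbf{0}$ there), which is what lets the $\nabla p$ term integrate to zero against $\bm\omega$. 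A secondary point to verify is that $H_M$ is gauge-independent, so that the conclusion does not depend on the choice of $\mathbf{A}$.
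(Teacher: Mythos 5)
Your proposal is correct and follows essentially the same route as the paper's proof: differentiate $H_H = H_M + (\alpha+\beta)H_C + \alpha\beta H_F$, show $\dot H_M = 0$ via $\E\perp\B$, and reduce $\dot H_C$ and $\dot H_F$ to multiples of $\int_\Omega \bm\omega\cdot(\j\times\B)\,\mathrm{d}x$ whose combination is killed exactly by \eqref{eqn:alphabeta}. Your extra attention to the boundary terms (in particular $\bm\omega\cdot\n=0$ on $\partial\Omega$) and to gauge independence of $H_M$ is a welcome tightening of details the paper leaves implicit.
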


\begin{proof}
We have
\begin{align*}
\frac{d}{dt}H_{H}&=\frac{d}{dt}(\mathbf{A}, \B)+\frac{d}{dt}[\alpha(\u, \B)+\beta(\mathbf{A}, \bm \omega)]+\frac{d}{dt}\alpha\beta(\u, \bm \omega)\\
&=\frac{d}{dt}(\mathbf{A}, \B)+\frac{d}{dt}(\alpha+\beta)(\u, \B)+\frac{d}{dt}\alpha\beta(\u,  \bm \omega).
\end{align*}
First, the magnetic helicity is conserved, i.e., 
\begin{align}
	\frac{d}{dt}(\mathbf{A}, \B) &= 2 (\B_t, \mathbf{A}) = 2 (\nabla \times [\u \times \B] , \mathbf{A}) - 2 \RH (\j \times \B , \mathbf{A})\\
	& = 2 (\u \times \B, \B) - 2\RH (\j \times \B, \B) = 0.
\end{align}
 It remains to check the other two terms. In fact,
from \eqref{eq:HallMHDu},
\begin{align*}
(\u_{t}, \B)=(\B, \u\times \bm \omega+S\j\times \B-\nabla p)=(\B, \u\times \bm \omega).
\end{align*}
From \eqref{eq:HallMHDB}, 
\begin{align*}
(\B_{t}, \u)=-(\nabla\times \E, \u)=-(\E, \nabla\times\u)=(\u\times\B-{\RH}\j\times \B, \nabla\times\u)=(\u\times\B,\bm \omega)-{\RH}(\j\times \B, \bm \omega).
\end{align*}
Consequently,
\begin{align*}
\frac{d}{dt}(\alpha+\beta)(\u, \B)=(\alpha+\beta)[(\u_{t}, \B)+(\u, \B_{t})]=-{\RH}(\alpha+\beta)(\j\times \B, \bm \omega).
\end{align*}
Moreover, 
\begin{align*}
\frac{d}{dt}\alpha\beta(\u, \bm \omega)=2\alpha\beta(\u_{t}, \bm \omega)=2\alpha\beta(\u\times \bm \omega+S\j\times \B-\nabla p, \bm \omega)=2S\alpha\beta(\j\times \B, \bm \omega).
\end{align*}
This implies that
$$
\frac{d}{dt}H_{H}=[2S\alpha\beta-{\RH}(\alpha+\beta)](\j\times \B, \bm \omega)
$$
and proves the desired result. 
\end{proof}

Similar to the discussions in \cite{arnold1999topological}, we show that the hybrid helicity provides a lower bound for the energy when $\alpha=\beta=S^{-1}\RH$. This bound, which was referred to as the Arnold inequality in the case of the magnetic helicity \cite[Section 8]{moffatt2021some}, shows that non-zero hybrid helicity, as a measure of the knottedness, provides a topological barrier which prevents a hybrid energy defined by $\|\B+ S^{-1}\RH \bm \omega\|^{2}$ from decaying below a certain value. The conclusion also holds for dissipative flows where the helicity is not conserved.
\begin{theorem}\label{thm:arnold}
$$
\|\B+ S^{-1}\RH \bm \omega\|^{2}\geq C^{-1}|H_{H}|,
$$
where $C$ is the positive constant in the Poincar\'e inequality.
\end{theorem}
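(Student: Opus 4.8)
The plan is to relate the hybrid helicity $H_{H}$ (with $\alpha=\beta=S^{-1}\RH$) to the hybrid energy $\|\B + S^{-1}\RH\,\bm\omega\|^2$ through a Cauchy--Schwarz argument, using the Poincar\'e inequality to control the vector potential $\mathbf{A}$. With $\alpha=\beta=S^{-1}\RH$, the integrand of $H_H$ is $(\mathbf{A}+ S^{-1}\RH\,\u)\cdot(\B + S^{-1}\RH\,\bm\omega)$. The key observation is that since $\nabla\times\mathbf{A}=\B$ and $\bm\omega=\nabla\times\u$, we have $\nabla\times(\mathbf{A}+ S^{-1}\RH\,\u) = \B + S^{-1}\RH\,\bm\omega$. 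So, writing $\W := \mathbf{A}+ S^{-1}\RH\,\u$ for the combined potential, we get $H_H = (\W, \nabla\times\W)$ and the hybrid energy is exactly $\|\nabla\times\W\|^2$.

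\medskip
\noindent The next step is to bound $|H_H| = |(\W, \nabla\times\W)|$ by $\|\W\|\,\|\nabla\times\W\|$ via Cauchy--Schwarz, and then to control $\|\W\|$ by $\|\nabla\times\W\|$. This is precisely where the Poincar\'e inequality enters: choosing $\mathbf{A}$ in the appropriate gauge (e.g.\ divergence-free with vanishing normal or tangential trace, consistent with the boundary conditions \eqref{boundary_cond} so that $\B\cdot\n=0$ and $\u=\mathbf{0}$ on $\partial\Omega$), one has $\|\W\|\leq C\|\nabla\times\W\|$, where $C$ is the Poincar\'e constant. Combining these gives
$$
|H_H| \leq \|\W\|\,\|\nabla\times\W\| \leq C\|\nabla\times\W\|^2 = C\,\|\B + S^{-1}\RH\,\bm\omega\|^2,
$$
which rearranges to the claimed inequality $\|\B+S^{-1}\RH\,\bm\omega\|^{2}\geq C^{-1}|H_H|$.

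\medskip
\noindent \textbf{The main obstacle} is the gauge choice and the Poincar\'e-type estimate for the combined potential $\W$. The fluid part $\u$ vanishes on $\partial\Omega$, so $\u$ itself satisfies a standard Poincar\'e inequality; the delicate point is the magnetic potential $\mathbf{A}$. Since only $\B\cdot\n=0$ is prescribed, one must fix the gauge of $\mathbf{A}$ (for instance imposing $\nabla\cdot\mathbf{A}=0$ and a compatible boundary condition) so that a Poincar\'e--Friedrichs inequality of the form $\|\mathbf{A}\|\le C\|\nabla\times\mathbf{A}\|$ holds on the contractible domain $\Omega$; this is where contractibility and the absence of harmonic fields are used. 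Once the gauge is fixed so that $\W$ admits the estimate $\|\W\|\le C\|\nabla\times\W\|$ with the same Poincar\'e constant, the remainder of the argument is a routine Cauchy--Schwarz bound. This mirrors the classical derivation of the Arnold inequality for magnetic helicity \cite[Section 8]{moffatt2021some}, with $\W$ playing the role of the magnetic potential.
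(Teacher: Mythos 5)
Your proposal is correct and follows essentially the same route as the paper: Cauchy--Schwarz applied to $H_H=(\W,\nabla\times\W)$ with $\W=\mathbf{A}+S^{-1}\RH\,\u$, followed by the Poincar\'e inequality $\|\W\|\le C\|\nabla\times\W\|$. The paper's proof is just these two inequalities stated without comment; your additional discussion of the gauge choice for $\mathbf{A}$ makes explicit a point the paper leaves implicit.
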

\begin{proof}
\begin{align*}
|H_{H}|= \left | \int (\mathbf{A} + S^{-1}\RH \u)\cdot (\B + S^{-1}\RH \bm \omega)\, dx \right |&\leq \|\mathbf{A} + S^{-1}\RH \u\|\|\B + S^{-1}\RH \bm \omega\|\\
&\leq C\|\B + S^{-1}\RH \bm \omega\|^{2}.
\end{align*}
\end{proof}

Next, we present time discretizations that preserve the above quantities precisely on the discrete level.
The MHD system has delicate differential structures reflected in its various  conserved quantities, e.g., the energy, the magnetic Gauss law, and the magnetic and cross/hybrid helicity. In fact, in the proof of the energy conservation, the Lorentz force and the magnetic convection cancel each other, and the fluid convection cancels itself. For the cross helicity, the fluid and magnetic convection cancel each other, and the Lorentz force cancels itself.
To construct conservative numerical methods, it is important to respect these symmetries on the discrete level. This in turn requires certain algebraic structures among the discrete spaces; for example, to preserve the magnetic Gauss law, we discretize unknowns on discrete de~Rham sequences, as in \eqref{eqn:derhamfe}. The magnetic helicity involves the magnetic field and its potential. Therefore it is largely independent of the fluid discretization. However, the energy law and the conservation of cross/hybrid helicity essentially derive from the symmetric coupling between fluids and electromagnetic fields.  Thus it is not surprising that to preserve them on the discrete level, the finite element spaces for the velocity and pressure (Stokes pairs) have to interplay with the spaces for the electromagnetic fields (de~Rham sequences). 

Therefore, the imposition of the boundary condition $\u=\mathbf{0}$ on $\partial \Omega$ can cause difficulties in designing conservative methods, because the description of all components of $\u$ on the boundary does not fit to the electromagnetic boundary conditions. Hence, the literature distinguishes for the standard MHD system between the boundary conditions $\u \times \n$ \cite{hu2021helicity} and $\u \cdot \n$ \cite{gawlik2020}, where the velocity field $\u$ is discretized with $\Hhc$- and $\Hhd$-conforming finite element spaces respectively. Both schemes conserve the energy, magnetic and cross helicity precisely on the discrete level. In the following, we also focus on these two cases and extend the proposed algorithms for the additional Hall-term and the hybrid helicity.

\subsection{Helicity and energy preserving scheme for $\u \times \n =  \mathbf{0}$}\label{sec:schemeutimesn}
In this section, we present a time discretization that preserves the energy and magnetic and hybrid helicity precisely for the boundary condition $\u \times \n = \mathbf{0}$ on $\partial \Omega$. Since these quantities are only preserved for $\f = \mathbf{0}$ and formally $\Re^{-1} = \Reminv =\infty$, we focus only on this case from now on for this section.

The following approach is mainly taken from \cite{hu2021helicity}, but adapted for the additional Hall-term. Let $\Qc$ denote the projection to $\Hhc$, $\Qd$ the projection to $\Hhd$ and $P_h:= p_h + 1/2 |\u_h|^2$ the total pressure. 

We first consider a semi-discrete formulation, discretized in space. We formally eliminate the electric field $\E_h$ by the generalized Ohm's law \eqref{eq:HallMHDE}.
The problem is: find $(\u_h(t), P_h(t), \B_h(t), \j_h(t)) \in \Hhc \times  H^1_0(\Omega) \times \Hhd \times \Hhc$ such that (we drop the argument $t$ in the following)
\begin{subequations}\label{alg:helicity-operator2}
	\begin{align}
		((\u_h)_t, \v_h) + (\Qc[\nabla \times \u_h] \times \u_h, \v_h) - S (\j_h \times \Qc \B_h ,\v_h) + (\nabla P_h, \v_h) = 0 &\quad \forall\, \v_h \in \Hhc,  \label{alg:helicity-operator2-u}\\
		( \u_h, \nabla Q_h) = 0 &\quad \forall\, Q_h \in H^1_0(\Omega), \\
		((\B_h)_t, \C_h) - (\nabla \times \Qc[\u_h \times \Qc \B_h], \C_h ) + \RH (\nabla \times \Qc[\j_h \times \Qc \B_h], \C_h ) = 0    &\quad \forall\, \C_h \in \Hhd, \label{alg:helicity-operator2-b}\\
		(\j_h, \k_h) - (\B_h, \nabla \times \k_h) = 0&\quad \forall\, \k_h \in \Hhc. \label{alg:helicity-operator3-j}
	\end{align}
\end{subequations}
This formulation is useful for analysis but not yet amenable to computation, due to the presence of the projection operators.

\begin{theorem}
	Any solution  $(\u_{h}, p_h, \B_{h}, \j_{h})$ of \eqref{alg:helicity-operator2} fulfils the magnetic Gauss's law $\nabla \cdot \B_h = 0$ precisely if $\nabla \cdot \B_h^0=\mathbf{0}$.
\end{theorem}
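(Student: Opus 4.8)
The plan is to exploit the constraint-propagation property of the discrete de~Rham sequence \eqref{eqn:derhamfe}, in the spirit of \cite{ma2016robust}: since the right-hand side of the magnetic field equation \eqref{alg:helicity-operator2-b} is assembled entirely from curls of $\Hhc$-functions, its divergence vanishes identically, and so $\nabla\cdot\B_h$ cannot change in time.

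First I would observe that, because $\B_h(t)\in\Hhd$ for every $t$ and $\Hhd$ is a fixed finite-dimensional space, the time derivative $(\B_h)_t$ also lies in $\Hhd$. Introducing the shorthands $\mathbf{w}_h^1:=\Qc[\u_h\times\Qc\B_h]$ and $\mathbf{w}_h^2:=\Qc[\j_h\times\Qc\B_h]$, which belong to $\Hhc$ by the definition of the $L^2$ projection $\Qc$, the key structural fact is that the discrete sequence \eqref{eqn:derhamfe} satisfies $\curl\,\Hhc\subseteq\Hhd$. Hence both $\nabla\times\mathbf{w}_h^1$ and $\nabla\times\mathbf{w}_h^2$ lie in $\Hhd$.

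Next I would upgrade the variational identity \eqref{alg:helicity-operator2-b} to a strong one. Rewriting it as
\begin{equation*}
\big((\B_h)_t-\nabla\times\mathbf{w}_h^1+\RH\,\nabla\times\mathbf{w}_h^2,\ \C_h\big)=0\qquad\forall\,\C_h\in\Hhd,
\end{equation*}
the element occupying the first slot lies in $\Hhd$; choosing $\C_h$ equal to that element and using that $(\cdot,\cdot)$ is an inner product on $\Hhd$ forces
\begin{equation*}
(\B_h)_t=\nabla\times\mathbf{w}_h^1-\RH\,\nabla\times\mathbf{w}_h^2\qquad\text{in }\Hhd.
\end{equation*}
Applying the divergence and invoking the complex property $\div\circ\curl=0$ of \eqref{eqn:derhamfe} then yields $\frac{d}{dt}(\nabla\cdot\B_h)=\nabla\cdot(\B_h)_t=0$. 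Thus $\nabla\cdot\B_h$ is constant in $t$, and since $\nabla\cdot\B_h^0=\mathbf{0}$ we conclude $\nabla\cdot\B_h(t)=\mathbf{0}$ for all $t$.

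The only genuinely load-bearing ingredients are the inclusion $\curl\,\Hhc\subseteq\Hhd$ together with $\div\circ\curl=0$; once these are available the argument is immediate. It is worth emphasising that the Hall term poses no additional obstacle here, precisely because it too enters \eqref{alg:helicity-operator2-b} as the curl of an $\Hhc$-function and therefore contributes nothing to the divergence.
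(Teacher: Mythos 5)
Your proof is correct and follows essentially the same route as the paper's: both test \eqref{alg:helicity-operator2-b} with the residual $(\B_h)_t-\nabla\times\Qc[\u_h\times\Qc\B_h]+\RH\,\nabla\times\Qc[\j_h\times\Qc\B_h]$ (valid since $\curl\,\Hhc\subseteq\Hhd$ makes it an admissible test function), deduce the strong form, and conclude via $\div\circ\curl=0$. You merely spell out the load-bearing inclusions more explicitly than the paper does, and you also have the sign of the Hall term right where the paper's statement of the strong form contains a small sign typo.
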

\begin{proof}
	 Choosing
	 \begin{equation}
	 	\C_h = (\B_h)_t - \nabla \times \Qc[\u_h \times \Qc \B_h + \RH \j_h \times \Qc \B_h]
	 \end{equation}
     in \eqref{alg:helicity-operator2-b} gives $(\B_h)_t = \nabla \times \Qc[\u_h \times \Qc \B_h + \RH \j_h \times \Qc \B_h] $ and hence $\nabla \cdot \B_h = 0$ if $\nabla \cdot \B_h^0=\mathbf{0}$.
\end{proof}
\begin{theorem}
	Any solution $(\u_{h}, p_h, \B_{h}, \j_{h})$ of \eqref{alg:helicity-operator2} satisfies the energy identity
	$$
	\frac{1}{2}\frac{d}{dt}(\|\u_{h}\|^{2}+S\|\B_{h}\|^{2})= 0.
	$$
\end{theorem}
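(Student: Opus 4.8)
The plan is to test the momentum equation \eqref{alg:helicity-operator2-u} with $\v_h = \u_h$ and the induction equation \eqref{alg:helicity-operator2-b} with $\C_h = S\B_h$, then add the two resulting identities, expecting everything except the time-derivative terms to cancel. The time-derivative terms give $((\u_h)_t, \u_h) = \tfrac{1}{2}\tfrac{d}{dt}\|\u_h\|^2$ and $S((\B_h)_t, \B_h) = \tfrac{S}{2}\tfrac{d}{dt}\|\B_h\|^2$, so the claim reduces to showing that all remaining contributions sum to zero.

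In the momentum equation, the convection term vanishes pointwise since $(\Qc[\nabla\times\u_h]\times\u_h)\cdot\u_h = 0$, and the pressure term vanishes because $P_h \in H^1_0(\Omega)$ is an admissible test function in the discrete incompressibility constraint (the second equation of \eqref{alg:helicity-operator2}), giving $(\nabla P_h, \u_h) = 0$. This leaves $\tfrac{1}{2}\tfrac{d}{dt}\|\u_h\|^2 = S(\j_h\times\Qc\B_h, \u_h)$.

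For the induction equation, I would move the discrete curl onto $\B_h$: since $\Qc[\u_h\times\Qc\B_h]$ and $\Qc[\j_h\times\Qc\B_h]$ lie in $\Hhc$, the definition of $\tilde\nabla_h\times$ together with \eqref{alg:helicity-operator3-j}, i.e.\ $\j_h = \tilde\nabla_h\times\B_h$, gives $(\nabla\times\Qc[\,\cdot\,], \B_h) = (\j_h, \Qc[\,\cdot\,])$; then self-adjointness of $\Qc$ combined with $\j_h\in\Hhc$ removes the projection, yielding $(\j_h, \u_h\times\Qc\B_h)$ and $(\j_h, \j_h\times\Qc\B_h)$ respectively. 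The Hall contribution vanishes on its own because $\j_h\cdot(\j_h\times\Qc\B_h)=0$ pointwise --- this is precisely the cancellation that makes the scheme insensitive to the extra Hall term. Hence $\tfrac{S}{2}\tfrac{d}{dt}\|\B_h\|^2 = S(\u_h\times\Qc\B_h, \j_h)$.

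Adding the two balances, the coupling terms $S(\j_h\times\Qc\B_h,\u_h)$ and $S(\u_h\times\Qc\B_h,\j_h)$ must cancel, which follows from the scalar triple-product identity $\u_h\cdot(\j_h\times\Qc\B_h) = -\j_h\cdot(\u_h\times\Qc\B_h)$; the result then follows. The main obstacle is the bookkeeping for the projections and the discrete curl in the induction term: one must use carefully that the arguments already lie in $\Hhc$, so that both the projection $\Qc$ and the replacement of $\nabla\times$ by $\tilde\nabla_h\times$ are exact. Once this is in place, the Lorentz force and the induction coupling are genuinely adjoint and cancel, exactly as in the continuous energy estimate \eqref{energy-1}.
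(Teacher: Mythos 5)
Your proof is correct and follows essentially the same route as the paper: test the momentum equation with $\u_h$ (convection and pressure terms vanish), test the induction equation with $\B_h$ (scaled by $S$), move the curl onto $\B_h$ via the definition $\j_h=\tilde\nabla_h\times\B_h$ and the fact that $\j_h\in\Hhc$ so the projection drops, kill the Hall term by $\j_h\cdot(\j_h\times\Qc\B_h)=0$, and cancel the two coupling terms by the triple-product identity. No gaps.
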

\begin{proof}
	Testing \eqref{alg:helicity-operator2-u} with $\u_{h}$,
	$$
	\frac{1}{2}\frac{d}{dt}\|\u_{h}\|^{2}= S (\j_h \times \Qc \B_h, \u_h).
	$$
	Testing \eqref{alg:helicity-operator2-b} with $\B_{h}$, 
	\begin{align*}
		\frac{1}{2}\frac{d}{dt}\|\B_{h}\|^{2}
		& = (\nabla \times \Qc[\u_h \times \Qc \B_h], \B_h ) - \RH (\nabla \times \Qc[\j_h \times \Qc \B_h], \B_h ) \\
		& = ( \Qc[\u_h \times \Qc \B_h], \j_h ) - \RH (\Qc[\j_h \times \Qc \B_h], \j_h ) \\
		& = - ( \j_h \times \Qc \B_h, \u_h ).
	\end{align*}
	Here we have used the definition of $\j_h$ in \eqref{alg:helicity-operator3-j} and that $\j_h \in \Hhc$.
	Consequently, the desired result holds by adding the above equalities. 	
\end{proof}

On the discrete level we define the hybrid helicity as 
\begin{equation}
	H_{H}:=\int_\Omega (\mathbf{A}_h + \alpha \u_h)\cdot (\B_h + \beta \bm \omega_h) \, \mathrm{d}x,
\end{equation}
where $\bm \omega_h := \Qc \nabla \times \u_h$ and ($\alpha, \beta$) satisfies \eqref{eqn:alphabeta}. 

\begin{theorem}
	The hybrid helicity of \eqref{alg:helicity-operator2} is conserved if $\f=\mathbf{0}$ and formally $\Re^{-1}=\Reminv =0$ for any $\alpha, \beta$ such that \eqref{eqn:alphabeta} holds.
\end{theorem}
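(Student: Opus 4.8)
The plan is to mirror the continuous-level proof, inserting the projection $\Qc$ wherever the magnetic field is paired against a quantity living in $\Hhc$, and to exploit the exactness of the discrete de~Rham sequence \eqref{eqn:derhamfe}. First I would introduce a discrete vector potential $\mathbf{A}_h\in\Hhc$ with $\nabla\times\mathbf{A}_h=\B_h$, which exists because $\B_h\in\Hhd$ is discretely divergence-free and the sequence is exact on the contractible $\Omega$. Using $\mathbf{A}_h\times\n=0$ and $\nabla\cdot\B_h=0$, integration by parts together with the identity $(\mathbf{g},\B_h)=(\mathbf{g},\Qc\B_h)$ valid for any $\mathbf{g}\in\Hhc$ gives $(\mathbf{A}_h,\bm\omega_h)=(\mathbf{A}_h,\nabla\times\u_h)=(\B_h,\u_h)$, so the discrete hybrid helicity splits exactly as in the continuous case,
\begin{equation*}
H_H=(\mathbf{A}_h,\B_h)+(\alpha+\beta)(\u_h,\B_h)+\alpha\beta(\u_h,\bm\omega_h).
\end{equation*}
I would then differentiate each of the three terms in time.

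For the magnetic helicity $\tfrac{d}{dt}(\mathbf{A}_h,\B_h)$ I would reuse the argument of the earlier theorems: since $(\B_h)_t=\nabla\times\mathbf{g}_h$ with $\mathbf{g}_h=\Qc[\u_h\times\Qc\B_h]-\RH\,\Qc[\j_h\times\Qc\B_h]\in\Hhc$, integration by parts and the relation $(\B_h,\Qc\mathbf{f})=(\Qc\B_h,\mathbf{f})$ reduce the derivative to $(\Qc\B_h,\u_h\times\Qc\B_h)-\RH(\Qc\B_h,\j_h\times\Qc\B_h)$, which vanishes because each cross product is pointwise orthogonal to $\Qc\B_h$. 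For $\tfrac{d}{dt}(\u_h,\B_h)=((\u_h)_t,\B_h)+(\u_h,(\B_h)_t)$ the crucial observation is that $(\u_h)_t\in\Hhc$, so $((\u_h)_t,\B_h)=((\u_h)_t,\Qc\B_h)$ and I may test \eqref{alg:helicity-operator2-u} with the admissible function $\v_h=\Qc\B_h$. The Lorentz term drops by orthogonality and the pressure term by $\nabla\cdot\B_h=0$ with $P_h|_{\partial\Omega}=0$, leaving $((\u_h)_t,\B_h)=(\u_h\times\bm\omega_h,\Qc\B_h)$. Expanding $(\u_h,(\B_h)_t)$ through \eqref{alg:helicity-operator2-b} and moving $\Qc$ across the inner products yields $(\u_h\times\Qc\B_h,\bm\omega_h)-\RH(\j_h\times\Qc\B_h,\bm\omega_h)$; a scalar triple-product identity shows the two convective contributions are negatives of one another and cancel, so $\tfrac{d}{dt}(\u_h,\B_h)=-\RH(\j_h\times\Qc\B_h,\bm\omega_h)$.

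For the fluid helicity term I would first note $\tfrac{d}{dt}(\u_h,\bm\omega_h)=2((\u_h)_t,\bm\omega_h)$, which follows because $\bm\omega_h=\Qc\nabla\times\u_h$ and the self-adjointness of $\Qc$ combined with integration by parts give $(\u_h,(\bm\omega_h)_t)=((\u_h)_t,\bm\omega_h)$. Testing \eqref{alg:helicity-operator2-u} with the admissible $\v_h=\bm\omega_h$, the self-convection $(\bm\omega_h\times\u_h,\bm\omega_h)$ and the pressure term $(\nabla P_h,\bm\omega_h)$ (using $\curl\grad=0$ and $\u_h\times\n=0$) both vanish, giving $((\u_h)_t,\bm\omega_h)=S(\j_h\times\Qc\B_h,\bm\omega_h)$ and hence $\tfrac{d}{dt}(\u_h,\bm\omega_h)=2S(\j_h\times\Qc\B_h,\bm\omega_h)$. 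Summing the three contributions,
\begin{equation*}
\frac{d}{dt}H_H=\big[\,2S\alpha\beta-\RH(\alpha+\beta)\,\big]\,(\j_h\times\Qc\B_h,\bm\omega_h),
\end{equation*}
which is zero by \eqref{eqn:alphabeta}.

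I expect the main obstacle to be bookkeeping the projections correctly: every cancellation hinges on repeatedly using that the test directions $\Qc\B_h$ and $\bm\omega_h$ lie in $\Hhc$, that $(\u_h)_t\in\Hhc$, and the adjoint/idempotency relations $(\mathbf{g},\B_h)=(\mathbf{g},\Qc\B_h)$ and $(\B_h,\Qc\mathbf{f})=(\Qc\B_h,\mathbf{f})$, none of which survive if the projection is dropped. The most delicate single step is the triple-product cancellation of the convective terms in $\tfrac{d}{dt}(\u_h,\B_h)$, which must be checked with $\Qc\B_h$ in place of $\B_h$; a secondary subtlety is justifying that $\mathbf{A}_h$ may be chosen to depend differentiably on $t$ (equivalently, that the rate of change of the discrete magnetic helicity is gauge-independent), which again follows from exactness of \eqref{eqn:derhamfe} and $\nabla\cdot\B_h=0$.
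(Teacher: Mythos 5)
Your proposal is correct and follows essentially the same route as the paper: the same three-term decomposition of $H_H$, the same test functions ($\Qc\B_h$, $\Qc[\nabla\times\u_h]$, and the magnetic equation paired against $\mathbf{A}_h$ and $\u_h$ via $\Qd$ or the pointwise identity $(\B_h)_t=\nabla\times\mathbf{g}_h$), and the same cancellations leading to $\frac{d}{dt}H_H=[2S\alpha\beta-\RH(\alpha+\beta)](\j_h\times\Qc\B_h,\Qc[\nabla\times\u_h])$. The extra care you take with the projection bookkeeping and the justification of the helicity splitting is consistent with, and slightly more explicit than, the paper's argument.
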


\begin{proof}
	Similar to the continuous level, we have
	\begin{align*}
		\frac{d}{dt}H_{H}=\frac{d}{dt}(\mathbf{A}_{h}, \B_{h})+\frac{d}{dt}(\alpha+\beta)(\u_{h}, \B_{h})+\frac{d}{dt}\alpha\beta(\u_{h}, \bm \omega_h).
	\end{align*}
	
	Testing \eqref{alg:helicity-operator2-b} with $\Qd \mathbf{A}_{h}$, using that $\nabla \times \Hhc \subseteq \Hhd$ and integrating by parts, we have
	\begin{align*}
		\frac{d}{dt}(\mathbf{A}_{h}, \B_{h})=2((\B_{h})_{t}, \mathbf{A}_{h})
		& = 2 (\nabla \times \Qc[ \u_h \times \Qc \B_h], \mathbf{A}_h ) - 2 \RH (\nabla \times \Qc[\j_h \times \Qc \B_h],  \mathbf{A}_h )\\
		& = 2 ( \Qc[\u_h \times \Qc \B_h], \B_h) - 2 \RH (\Qc[\j_h \times \Qc \B_h],  \B_h )\\
		& = 2 ( \u_h \times \Qc \B_h, \Qc \B_h) - 2 \RH (\j_h \times \Qc \B_h,  \Qc\B_h )\\
		&=0.
	\end{align*}
	Testing \eqref{alg:helicity-operator2-u} with $\Qc \B_{h}$, we have
	\begin{align*}
		((\u_{h})_{t}, \B_{h})&=  -(\Qc[\nabla \times \u_h] \times \u_h, \Qc \B_h) + S (\j_h \times \Qc \B_h ,\Qc \B_h) \\
		& = -(\Qc[\nabla \times \u_h] \times \u_h, \Qc \B_h).
	\end{align*}
	Testing \eqref{alg:helicity-operator2-b} with $\Qd \u_{h}$, using that $((\B_{h})_{t}, \Qd \u_{h}) = ((\B_{h})_{t}, \u_{h})$, we have
	\begin{align*}
		((\B_{h})_{t}, \u_{h})
		& = (\nabla \times \Qc[\u_h \times \Qc \B_h], \u_h ) -  \RH (\nabla \times \Qc[\j_h \times \Qc \B_h],  \u_h )\\
		& = (\u_h \times \Qc \B_h, \Qc\nabla \times \u_h ) -  \RH (\j_h \times \Qc \B_h, \Qc[ \nabla \times \u_h]) \\
		& = (\Qc[\nabla \times \u_h] \times \u_h, \Qc \B_h) -  \RH (\j_h \times \Qc \B_h,   \Qc[ \nabla \times \u_h]). 
	\end{align*}
	Consequently,
	\begin{align*}
		\frac{d}{dt}(\alpha+\beta)(\u_{h}, \B_{h})=(\alpha+\beta)[((\u_{h})_{t}, \B_{h})+(\u_{h}, (\B_{h})_{t})]=-{\RH}(\alpha+\beta)(\j_h \times \Qc \B_h, \Qc[ \nabla \times \u_h]) .
	\end{align*}
	Moreover, testing  \eqref{alg:helicity-operator2-u} with $\Qc [\nabla \times \u_{h}]$, we get
	\begin{align*}
		\frac{d}{dt}\alpha\beta(\u_{h}, \nabla \times \u_h)
		&=2\alpha\beta((\u_{h})_{t},  \nabla \times \u_h)\\
		& = -2\alpha\beta(\Qc[\nabla \times \u_h] \times \u_h, \Qc [\nabla \times \u_h]) + 2\alpha\beta S (\j_h \times \Qc \B_h ,\Qc [\nabla \times \u_h])\\
		& = 2\alpha\beta S (\j_h \times \Qc \B_h ,\Qc [\nabla \times \u_h]).\\
	\end{align*}	
	This implies that
	$$
	\frac{d}{dt}H_{H}=[2S\alpha\beta-{\RH}(\alpha+\beta)] (\j_h \times \Qc \B_h, \Qc[ \nabla \times \u_h]).
	$$
\end{proof}

Similar to Theorem \ref{thm:arnold} on the continuous level, we have the following.
The proof is analogous, only using the discrete Poincar\'e inequality \cite[Theorem 5.11]{Arnold.D;Falk.R;Winther.R.2006a}.
\begin{theorem}[discrete Arnold inequality]
$$
\|\B_{h}+ S^{-1}\RH \bm \omega_{h}\|^{2}\geq C^{-1}|H_{H}|,
$$
where $C$ is a positive constant.
\end{theorem}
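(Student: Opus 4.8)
The plan is to mirror the continuous argument of Theorem~\ref{thm:arnold}, specialised to $\alpha=\beta=S^{-1}\RH$, with the $\Hc$-Poincar\'e inequality replaced by its discrete counterpart. First I would introduce a discrete vector potential: since $\Omega$ is contractible, the finite element de~Rham sequence \eqref{eqn:derhamfe} is exact, so the divergence-free field $\B_h\in\Hhd$ admits some $\mathbf{A}_h\in\Hhc$ with $\nabla\times\mathbf{A}_h=\B_h$, and I may fix the gauge so that $\mathbf{A}_h\perp\nabla H^h_0(\grad)$ in $L^2$. Setting $\mathbf{z}_h:=\mathbf{A}_h+S^{-1}\RH\u_h\in\Hhc$ and using the discrete divergence constraint $(\u_h,\nabla Q_h)=0$, the combined field $\mathbf{z}_h$ is again $L^2$-orthogonal to $\nabla H^h_0(\grad)$, i.e.\ it lies in the subspace on which a discrete Poincar\'e inequality is available. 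With this notation $H_H=(\mathbf{z}_h,\B_h+S^{-1}\RH\bm\omega_h)$, so a single Cauchy--Schwarz step gives $|H_H|\le\|\mathbf{z}_h\|\,\|\B_h+S^{-1}\RH\bm\omega_h\|$, and the whole task reduces to proving $\|\mathbf{z}_h\|\le C\,\|\B_h+S^{-1}\RH\bm\omega_h\|$.

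To control $\|\mathbf{z}_h\|$ by the energy, I would work with the reduced curl $\tilde{\nabla}_h\times=\Qc(\nabla\times\,\cdot\,)$ introduced in the preliminaries. Since $\bm\omega_h=\Qc\nabla\times\u_h$ and $\Qc\nabla\times\mathbf{A}_h=\Qc\B_h$, one has $\tilde{\nabla}_h\times\mathbf{z}_h=\Qc\B_h+S^{-1}\RH\bm\omega_h\in\Hhc$. This differs from the energy density $\B_h+S^{-1}\RH\bm\omega_h$ only by $(I-\Qc)\B_h$, which is $L^2$-orthogonal to $\Hhc$ and hence to $\tilde{\nabla}_h\times\mathbf{z}_h$; consequently $\|\B_h+S^{-1}\RH\bm\omega_h\|\ge\|\tilde{\nabla}_h\times\mathbf{z}_h\|$. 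If the discrete Poincar\'e inequality can be used in the form $\|\mathbf{z}_h\|\le C\,\|\tilde{\nabla}_h\times\mathbf{z}_h\|$ on the discretely divergence-free subspace, then chaining these two estimates yields $\|\mathbf{z}_h\|\le C\,\|\tilde{\nabla}_h\times\mathbf{z}_h\|\le C\,\|\B_h+S^{-1}\RH\bm\omega_h\|$ and closes the proof.

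The genuinely discrete difficulty, and what I expect to be the main obstacle, is exactly this last step: the standard discrete Poincar\'e inequality \cite[Theorem 5.11]{Arnold.D;Falk.R;Winther.R.2006a} controls $\|\mathbf{z}_h\|$ by the \emph{strong} curl, $\|\mathbf{z}_h\|\le C\,\|\nabla\times\mathbf{z}_h\|$, whereas the energy in the statement involves the \emph{projected} vorticity $\bm\omega_h=\Qc\nabla\times\u_h$, so the curl appearing on the right-hand side is the reduced curl $\tilde{\nabla}_h\times$. Passing from $\|\nabla\times\mathbf{z}_h\|$ to $\|\tilde{\nabla}_h\times\mathbf{z}_h\|=\|\Qc\nabla\times\mathbf{z}_h\|$ is the wrong direction in general, so one must instead verify that $\tilde{\nabla}_h\times$ is bounded below (injective) on $\{\v_h\in\Hhc:(\v_h,\nabla Q_h)=0\ \forall Q_h\}$; on a fixed mesh this holds with a possibly mesh-dependent constant, which is consistent with the hypotheses of the theorem. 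I would note the clean fallback: if the energy is phrased with the unprojected vorticity $\nabla\times\u_h$, then the projection identity $(\mathbf{z}_h,\bm\omega_h)=(\mathbf{z}_h,\nabla\times\u_h)$ (valid because $\mathbf{z}_h\in\Hhc$) gives $H_H=(\mathbf{z}_h,\nabla\times\mathbf{z}_h)$, and the strong-curl Poincar\'e inequality closes the argument at once with the mesh-independent Poincar\'e constant, precisely as in Theorem~\ref{thm:arnold}. Thus the only delicate point is the control of the projection $\Qc$ hidden inside $\bm\omega_h$.
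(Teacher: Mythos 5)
Your proposal follows exactly the route the paper intends: Cauchy--Schwarz applied to $H_H=\bigl(\mathbf{A}_h+S^{-1}\RH\u_h,\ \B_h+S^{-1}\RH\bm\omega_h\bigr)$, followed by a discrete Poincar\'e inequality for the gauge-fixed potential $\mathbf{z}_h=\mathbf{A}_h+S^{-1}\RH\u_h\in\Hhc$, mirroring Theorem~\ref{thm:arnold}; the paper's own proof consists of a single sentence asserting precisely this. The steps you do carry out are correct: exactness of \eqref{eqn:derhamfe} on a contractible domain supplies $\mathbf{A}_h$, the divergence constraint on $\u_h$ places $\mathbf{z}_h$ in the orthogonal complement of the discrete gradients, and the orthogonal splitting $\B_h+S^{-1}\RH\bm\omega_h=\tilde{\nabla}_{h}\times\mathbf{z}_h+(I-\Qc)\B_h$ gives $\|\B_h+S^{-1}\RH\bm\omega_h\|\geq\|\tilde{\nabla}_{h}\times\mathbf{z}_h\|$.

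The difficulty you flag at the end is genuine, and it is the one point at which both your argument and the paper's are incomplete. Because $\bm\omega_h=\Qc\nabla\times\u_h$ is the \emph{projected} vorticity, the estimate that is actually needed is the reduced-curl inequality $\|\mathbf{z}_h\|\leq C\,\|\Qc\nabla\times\mathbf{z}_h\|$, whereas \cite[Theorem 5.11]{Arnold.D;Falk.R;Winther.R.2006a} bounds $\|\mathbf{z}_h\|$ by the full curl $\|\nabla\times\mathbf{z}_h\|\geq\|\Qc\nabla\times\mathbf{z}_h\|$ --- the wrong direction, as you observe. The gap is not an artifact of your particular decomposition: specializing to $\B_h=\mathbf{A}_h=\mathbf{0}$, the claimed inequality reduces to $\|\Qc\nabla\times\u_h\|^{2}\geq C^{-1}\,|(\u_h,\nabla\times\u_h)|=C^{-1}\,|(\u_h,\Qc\nabla\times\u_h)|$, which is exactly what the reduced-curl Poincar\'e inequality for $\u_h$ would deliver and what the full-curl version does not. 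Closing this step requires an extra ingredient --- a uniform lower bound for the self-adjoint operator $\Qc\circ(\nabla\times)$ on the gauge-fixed subspace of $\Hhc$ (equivalently, a uniform bound for $\Qc$ restricted to $\nabla\times\Hhc\subset\Hhd$) --- or else one of your two fallbacks: accept a mesh-dependent constant, or restate the theorem with the unprojected vorticity, in which case your identity $H_H=(\mathbf{z}_h,\nabla\times\mathbf{z}_h)$ closes the proof immediately with the mesh-independent constant. Since the statement only asserts that $C$ is ``a positive constant'' (unlike the continuous Theorem~\ref{thm:arnold}, which identifies it with the Poincar\'e constant), your fixed-mesh reading is consistent with the literal claim; but the cited mesh-independent inequality does not by itself justify the discrete statement, and your analysis is the more careful of the two.
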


To render the semi-discrete problem \eqref{alg:helicity-operator2} amenable to computation, we introduce auxiliary variables for the projection operators. The resulting problem is: find $(\u_{h}(t), P_h(t), \B_{h}(t), \E_{h}(t), \j_{h}(t), \mathbf{H}_{h}(t),$ $\bm \omega_{h}(t)) \in$ $\Hhc \times H^1_0(\Omega) \times \Hhd \times [\Hhc]^4 ) $, such that for any $(\v_h, q_h, \C_h, \F_h, \k_h, \bm G_h, \bm \mu_h)$ in the same space,
\begin{subequations}\label{alg:helicityutimesn}
	\begin{align}\label{alg:helicity-cross-u}
		((\u_{h})_{t}, \v_h)- (\u_{h}\times\bm \omega_{h}, \v_h)-S(\j_{h}\times  \bm H_{h}, \v_h)+\Re^{-1}(\nabla\times \u_{h}, \nabla \times \v_h)+( \v_h, \nabla P_{h})&= 0,
		\\
		( \u_{h}, \nabla Q_{h})&=0,
		\\\label{alg:helicity-cross-b}
		((\B_{h})_{t}, \C_h)+(\nabla\times  \E_{h}, \C_h)&=0,\\\label{alg:helicity-cross-j2}
		(\j_{h}, \F_h)-(\B_{h},  \nabla\times \F_h)&=0,\\\label{alg:helicity-cross-h1}
		(\bm H_{h}, \bm G_h)-(\B_{h},  \bm G_h)&=0,\\\label{alg:helicity-cross-h2}
		(\bm \omega_{h}, \bm \mu_h)-(\nabla\times \u_{h},  \bm \mu_h)&=0,\\\label{alg:helicity-cross-w2}
		-\Reminv (\j_{h}, \k_h) + (\E_{h}, \k_h)-((\RH\,\j_{h}-\u_{h})\times \bm H_{h},  \k_h)&=0.
	\end{align}
\end{subequations}
Now \eqref{alg:helicity-cross-j2} gives $\j_{h}=\tilde\nabla_{h}\times \B_{h}$,  \eqref{alg:helicity-cross-h1} gives $\bm H_{h}=\Qc \B_{h}$; and \eqref{alg:helicity-cross-h2} gives $\bm \omega_{h}=\Qc\nabla\times \u_{h}$.

For the time-discretization, we replace the time-derivatives of $(\u_h)_t$ and $(\B_h)_t$ by the difference quotients
\begin{equation}
	D_t \u_h = \frac{\u^{k+1}_h - \u^{k}_h}{\Delta t} \quad \text{ and } \quad D_t \B_h = \frac{\B^{k+1}_h - \B^{k}_h}{\Delta t}.
\end{equation}
We replace $\u_h$ and $\B_h$ with the average of two neighbouring time steps defined as 
$\u^{k+\frac{1}{2}}:=\frac{1}{2}(\u^{k+1}+\u^{k})$ and $\B^{k+\frac{1}{2}}:=\frac{1}{2}(\B^{k+1}+\B^{k})$. All the other auxiliary variables are only defined on the midpoints of two time steps $k+\frac{1}{2}$ (not an average) and denoted as $P^{k+\frac{1}{2}}_h, \E^{k+\frac{1}{2}}_{h}, \j^{k+\frac{1}{2}}_{h}, \mathbf{H}^{k+\frac{1}{2}}_{h}$ and $\bm \omega^{k+\frac{1}{2}}_{h}$. This way we only have to provide initial data $\u^0_h$ and $\B^0_h$ and then solve the time-discretized version of \eqref{alg:helicityutimesn} for each $k \geq 1$; compare with \cite[Algorithm 1]{hu2021helicity}.

\begin{theorem}\label{thm:timeconsutimesn}
	The time-discretized version of \eqref{alg:helicityutimesn} preserves the energy, magnetic and hybrid helicity precisely and enforces $\nabla \cdot \B_h=0$ for all time steps; i.e., for all $ k \geq 0$ there holds
	
\begin{align}
	\int_\Omega \mathbf{u}^{k+1}_h  \cdot \u_h^{k+1} + S \mathbf{B}^{k+1}_h  \cdot \B_h^{k+1} \mathrm{d} x &= \int_\Omega  \mathbf{u}^{k}_h  \cdot \u_h^{k} + S \mathbf{B}^{k}_h  \cdot \B_h^{k} \mathrm{d} x,\\	
	\int_\Omega \mathbf{A}^{k+1}_h  \cdot \B_h^{k+1} \mathrm{d} x &= \int_\Omega \mathbf{A}^{k}_h  \cdot \B_h^{k} \mathrm{d} x,\\
	\int_\Omega \left(\mathbf{A}^{k+1}_h + \alpha \u^{k+1}_h\right)\cdot \left(\B_h^{k+1} + \beta \bm \omega^{k+1/2}_h\right) \mathrm{d} x &= \int_\Omega \left(\mathbf{A}^{k}_h + \alpha \u^{k}_h\right)\cdot \left(\B_h^{k} + \beta \bm \omega^{k-1/2}_h\right) \mathrm{d} x,\\
	\div \B^k_h = 0.
\end{align}
\end{theorem}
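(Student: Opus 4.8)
The plan is to transcribe each of the four semi-discrete conservation proofs above into the fully discrete setting, the engine being the single algebraic property of the implicit midpoint rule: for any two time sequences $\mathbf{f}^k,\mathbf{g}^k$,
\[
\tfrac{1}{\Delta t}\big[(\mathbf{f}^{k+1},\mathbf{g}^{k+1})-(\mathbf{f}^{k},\mathbf{g}^{k})\big]=(D_t\mathbf{f},\mathbf{g}^{k+1/2})+(\mathbf{f}^{k+1/2},D_t\mathbf{g}),
\]
which replaces the continuous product rule. Since every conserved quantity is quadratic in the fields, testing the time-discretized equations with the midpoint functions $\u_h^{k+1/2},\B_h^{k+1/2},\bm H_h^{k+1/2},\bm\omega_h^{k+1/2}$ should reproduce exactly the antisymmetry cancellations of the space-discrete proofs.

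I would first dispatch Gauss's law and the energy, which are routine. Testing the time-discrete \eqref{alg:helicity-cross-b} with $\C_h=D_t\B_h+\nabla\times\E_h^{k+1/2}\in\Hhd$ gives $D_t\B_h=-\nabla\times\E_h^{k+1/2}\in\nabla\times\Hhc\subseteq\Hhd$, which is pointwise divergence-free; hence $\div\B_h^{k+1}=\div\B_h^{k}$ and induction from $\div\B_h^{0}=0$ closes this part. For the energy I would test the momentum equation \eqref{alg:helicity-cross-u} with $\u_h^{k+1/2}$ and the induction equation with $S\B_h^{k+1/2}$; the Leibniz identity turns the time-derivative terms into $\tfrac{1}{2\Delta t}(\|\u_h^{k+1}\|^2-\|\u_h^{k}\|^2)$ and $\tfrac{S}{2\Delta t}(\|\B_h^{k+1}\|^2-\|\B_h^{k}\|^2)$, the advective self-term $(\u_h^{k+1/2}\times\bm\omega_h^{k+1/2},\u_h^{k+1/2})$ and the Lorentz self-term $(\j_h^{k+1/2}\times\bm H_h^{k+1/2},\bm H_h^{k+1/2})$ vanish by antisymmetry, the pressure/constraint terms drop because $\u_h^{k+1/2}$ is discretely divergence-free and $\B_h^{k+1/2}$ is exactly divergence-free, and the two surviving coupling terms cancel as in the semi-discrete energy identity.

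For the magnetic helicity I would write $\B_h^{j}=\nabla\times\mathbf{A}_h^{j}$ and use the symmetry $(\mathbf{A}_h^{k+1},\B_h^{k})=(\mathbf{A}_h^{k},\B_h^{k+1})$ (integration by parts, with boundary terms vanishing since $\mathbf{A}_h\in\Hhc$) to obtain the discrete identity $(\mathbf{A}_h^{k+1},\B_h^{k+1})-(\mathbf{A}_h^{k},\B_h^{k})=-2\Delta t\,(\B_h^{k+1/2},\E_h^{k+1/2})$. Since $\Qc$ is self-adjoint and fixes $\E_h$, one has $(\B_h^{k+1/2},\E_h^{k+1/2})=(\bm H_h^{k+1/2},\E_h^{k+1/2})$, and testing the discrete Ohm's law \eqref{alg:helicity-cross-w2} with $\k_h=\bm H_h^{k+1/2}$ gives $(\E_h^{k+1/2},\bm H_h^{k+1/2})=((\RH\,\j_h-\u_h)\times\bm H_h,\bm H_h)^{k+1/2}=0$; hence the magnetic helicity is conserved.

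The hybrid helicity is where the real work lies, and I expect it to be the main obstacle. I would split $H_H$ into its magnetic, cross, and fluid-helicity parts, the magnetic part contributing zero as above. Applying the Leibniz identity and testing \eqref{alg:helicity-cross-u} with $\bm H_h^{k+1/2}$ and \eqref{alg:helicity-cross-b} with (the $\Qd$-lift of) $\u_h^{k+1/2}$, the vortex/advection terms cancel by the scalar triple-product identity, and the cross- and fluid-helicity parts should reproduce the semi-discrete contributions $-\RH(\alpha+\beta)\,T$ and $2S\alpha\beta\,T$ with $T:=(\j_h^{k+1/2}\times\bm H_h^{k+1/2},\bm\omega_h^{k+1/2})$, so that by the relation \eqref{eqn:alphabeta} they sum to $\Delta t[2S\alpha\beta-\RH(\alpha+\beta)]\,T=0$. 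The delicate point is the fluid-helicity part $\alpha\beta(\u_h,\bm\omega_h)$, in which the vorticity is staggered: here I would use the identity $(\u_h^{k+1},\nabla\times\u_h^{k+1})-(\u_h^{k},\nabla\times\u_h^{k})=2\Delta t\,(D_t\u_h,\bm\omega_h^{k+1/2})$, which follows from the symmetry of the helicity form and $\bm\omega_h^{k+1/2}=\Qc\nabla\times\u_h^{k+1/2}$, together with the momentum equation tested against $\bm\omega_h^{k+1/2}$. The role of the half-step staggering of $\bm\omega$ in the definition of the discrete $H_H$ is precisely to absorb the cross-step remainders that would otherwise appear when the integer-step vorticity is used; verifying that these remainders telescope and cancel exactly, while keeping careful track of the projections $\Qc,\Qd$ so that each test function genuinely lies in the correct discrete space, is the crux of the argument and the step I would expect to demand the most care.
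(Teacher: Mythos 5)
Your proposal follows essentially the same route as the paper: the discrete midpoint Leibniz identity replaces the continuous product rule and each semi-discrete cancellation is transcribed verbatim --- indeed the paper's proof consists of exactly this remark plus the magnetic-helicity computation you reproduce (testing the discrete Ohm's law with $\mathbf{H}_h^{k+1/2}$). You in fact supply more detail than the paper does, in particular by isolating the half-step staggering of $\bm\omega_h$ in the discrete hybrid helicity as the delicate point, which the paper passes over by deferring to the analogous argument in its cited reference.
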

\begin{proof}
	These results follow immediately from the proofs of the continuous results by replacing the continuous time-derivative $\partial_t$ by $D_t$. As an example, we prove the conservation of the magnetic helicity. It holds that
	\begin{equation*}
	 \frac{1}{\Delta t}	\int_\Omega \mathbf{A}^{k+1}_h  \cdot \B_h^{k+1} - \mathbf{A}^{k}_h  \cdot \B_h^{k} \ \mathrm{d} x = (D_t \B_h, \mathbf{A}^{k+1/2}_h) + (D_t \mathbf{A}_h, \mathbf{B}^{k+1/2}_h). 
	\end{equation*}
	From the definition of the scheme, it follows that 
	\begin{align*}
		(D_t \B_h, \mathbf{A}^{k+1/2}_h) &= -\left(\nabla \times \E_h^{k+1/2}, \frac{\mathbf{A}_h^{k+1}+\mathbf{A}_h^k}{2}\right) = -\left(\E_h^{k+1/2}, \frac{\mathbf{B}_h^{k+1}+\mathbf{B}_h^k}{2}\right) \\
		& = -\left(\E_h^{k+1/2}, \mathbf{H}^{k+1/2}_h\right) = - \left([\RH \j_h^{k+1/2} -  \u_h^{k+1/2}] \times \mathbf{H}^{k+1/2}_h, \mathbf{H}^{k+1/2}_h\right)=0.
	\end{align*}
	The term $(D_t \mathbf{A}_h, \mathbf{B}^{k+1/2}_h) $ vanishes with an analogous proof.
\end{proof}

\subsection{Helicity and energy preserving scheme for $\u \cdot \n =  0$}\label{sec:schemeucdotn}
We now consider the boundary conditions $\u \cdot \n =  0$ on $\partial \Omega$. The presented scheme preserves the energy and magnetic helicity precisely, and in contrast to the previous algorithm also enforces $\nabla \cdot \u_h = 0$ precisely, but it does not preserve the hybrid helicity. Again, we only focus on $\f = \mathbf{0}$ and formally $\Re^{-1} = \Reminv =\infty$. 

The following algorithm is mainly taken from \cite{gawlik2020}, but adapted for the additional Hall-term.  
The semi-discrete form of our algorithm is given by: find $(\u_h(t), p_h(t), \B_h(t), \j_h(t)) \in \Hhd \times  \Ltz \times \Hhd \times \Hhc$ such that

\begin{subequations}\label{alg:helicity-operator3}
	\begin{align}
		((\u_h)_t, \v_h) + (\Qc[(\tilde \nabla_h \times \u_h) \times \Qc \u_h ], \v_h) - S (\Qc[\j_h \times \Qc \B_h] ,\v_h) - (p_h, \nabla \cdot \v_h) = 0 &\quad \forall\, \v_h \in \Hhd,  \label{alg:helicity-operator3-u}\\
		(\nabla \cdot \u_h, q_h) = 0 &\quad \forall\, q_h \in \Ltz, \\
		((\B_h)_t, \C_h) - (\nabla \times \Qc[\Qc \u_h \times \Qc \B_h], \C_h ) + \RH (\nabla \times \Qc[\j_h \times \Qc \B_h], \C_h ) = 0    &\quad \forall\, \C_h \in \Hhd, \label{alg:helicity-operator3-b}\\
		(\j_h, \k_h) - (\B_h, \nabla \times \k_h) = 0&\quad \forall\, \k_h \in \Hhc.
	\end{align}
\end{subequations}

For the following theorems, we only show the part of the proof that involves the additional Hall-term. The remainders of the proofs then coincide with the ones in \cite{gawlik2020}.

\begin{remark}
	Similar to before, every solution satisfies $\nabla \cdot \B_h = 0$ if $\nabla \cdot \B_h^0=0$. Furthermore, the $\Hd$-$L^2(\Omega)$ discretization allows the exact enforcement of $\nabla \cdot \u_h=0$, e.g., for $\V_h = \mathbb{BDM}_k$ or $\V_h = \mathbb{RT}_k$ and $Q_h = \mathbb{DG}_{k-1}$ since then $\nabla \cdot \V_h \subset Q_h$.
\end{remark}

\begin{theorem}\label{thm:energyudotn}
	Any solution $(\u_{h}, p_h, \B_{h}, \j_{h})$ of \eqref{alg:helicity-operator3} satisfies the energy identity
	$$
	\frac{1}{2}\frac{d}{dt}(\|\u_{h}\|^{2}+S\|\B_{h}\|^{2})= 0.
	$$
\end{theorem}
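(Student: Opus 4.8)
<br>

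The plan is to mimic the energy-conservation proof for the previous scheme \eqref{alg:helicity-operator2}, testing the velocity equation with $\u_h$ and the magnetic equation with $\B_h$, and checking that the two coupling contributions cancel. The central point, as the theorem asks us to isolate, is the behaviour of the additional Hall term; the purely hydrodynamic and resistive-free remainder is identical to the proof in \cite{gawlik2020}.

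First I would test \eqref{alg:helicity-operator3-u} with $\v_h = \u_h$. The pressure term drops because $\nabla \cdot \u_h = 0$ (enforced exactly by the $\Hhd$--$\Ltz$ pair). The advection term $(\Qc[(\tilde \nabla_h \times \u_h) \times \Qc \u_h], \u_h)$ vanishes: since $\u_h \in \Hhd \subset L^2$, one has $(\Qc[\,\cdot\,], \u_h) = ((\tilde \nabla_h \times \u_h) \times \Qc \u_h, \Qc \u_h)$, and the integrand is a scalar triple product of the form $(\mathbf{a} \times \mathbf{b}) \cdot \mathbf{b}$, which is zero pointwise. This leaves
\begin{equation*}
\frac{1}{2}\frac{d}{dt}\|\u_h\|^2 = S(\Qc[\j_h \times \Qc \B_h], \u_h) = S(\j_h \times \Qc \B_h, \Qc \u_h).
\end{equation*}
Next I would test \eqref{alg:helicity-operator3-b} with $\C_h = \B_h$. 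Using $\nabla \times \Hhc \subseteq \Hhd$, the definition of $\j_h$ in the last equation of \eqref{alg:helicity-operator3} (so that $(\nabla \times \Qc[\cdots], \B_h) = (\Qc[\cdots], \j_h)$ with $\j_h \in \Hhc$), and the self-adjointness of $\Qc$, both magnetic convection terms transfer onto $\j_h$:
\begin{equation*}
\frac{1}{2}\frac{d}{dt}\|\B_h\|^2 = (\u_h \times \Qc \B_h, \Qc \j_h)|_{\Qc\u_h} - \RH(\j_h \times \Qc \B_h, \Qc \j_h) = -(\j_h \times \Qc \B_h, \Qc \u_h),
\end{equation*}
where the Hall contribution $\RH(\j_h \times \Qc \B_h, \Qc \j_h)$ is a triple product $(\j_h \times \Qc\B_h)\cdot \j_h$ under the projection and therefore vanishes, exactly as in the earlier energy proof.

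Adding the two identities with the correct weight, the surviving cross terms $S(\j_h \times \Qc \B_h, \Qc \u_h)$ and $-S(\j_h \times \Qc \B_h, \Qc \u_h)$ cancel, giving $\tfrac{1}{2}\tfrac{d}{dt}(\|\u_h\|^2 + S\|\B_h\|^2) = 0$. The only real subtlety, and the step I would treat most carefully, is the bookkeeping of the projection operators $\Qc$ so that the antisymmetry which makes the Hall term vanish is genuinely available: one must verify that after moving a curl onto $\j_h$ via \eqref{alg:helicity-operator3-b} and the definition of $\j_h$, the Hall term really has the form $(\j_h \times \text{something}) \cdot \j_h$ rather than $(\j_h \times \text{something})\cdot \u_h$. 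Once the placement of $\Qc$ confirms that the Hall term is tested against $\j_h$ (its own factor) rather than against $\u_h$, its vanishing is immediate, and everything else is the standard MHD calculation of \cite{gawlik2020}.
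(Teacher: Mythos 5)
Your proof is correct and takes essentially the same approach as the paper: the paper's own proof consists of exactly the key step you isolate, namely $\RH\,(\nabla \times \Qc [\j_h \times \Qc \B_h], \B_h) = \RH\,(\Qc[\j_h \times \Qc \B_h], \j_h) = 0$ using $\Qc \j_h = \j_h$ for $\j_h \in \Hhc$, and it delegates the remaining standard-MHD cancellations to \cite{gawlik2020}. You merely write out in full the cited parts (the triple-product vanishing of the advection term and the cancellation of the Lorentz/induction cross terms), all with the same mechanism and correct bookkeeping of $\Qc$.
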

\begin{proof}
For the energy identity, it is crucial that the additional Hall term vanishes when \eqref{alg:helicity-operator3-b} is tested with $\B_h$. Indeed, we have that 
\begin{equation}
	\RH\,(\nabla \times \Qc [\j_h \times \Qc \B_h], \B_h) = \RH\,(\Qc[\j_h \times \Qc \B_h], \j_h) = 0
\end{equation}
since $\Qc \j_h=\j_h$ for $\j_h \in \Hhc$.
\end{proof}

\begin{theorem}\label{thm:magheludotn}
	The magnetic helicity of \eqref{alg:helicity-operator3} is conserved if $\f=\mathbf{0}$ and formally $\Re^{-1}=\Reminv =0$.
\end{theorem}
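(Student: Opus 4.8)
The plan is to reuse the magnetic-helicity argument already carried out for the $\u \times \n$ scheme in Section~\ref{sec:schemeutimesn}, since \eqref{alg:helicity-operator3-b} differs from \eqref{alg:helicity-operator2-b} only in that $\u_h$ is replaced by $\Qc \u_h$ in the advective term, and this replacement does not affect the helicity balance. First I would introduce a discrete vector potential $\mathbf{A}_h \in \Hhc$ with $\nabla \times \mathbf{A}_h = \B_h$; such an $\mathbf{A}_h$ exists by the exactness of the sequence \eqref{eqn:derhamfe} (using contractibility of $\Omega$) together with $\nabla \cdot \B_h = 0$. Writing $H_M = (\mathbf{A}_h, \B_h)$ and differentiating in time, an integration by parts---valid because $\mathbf{A}_h \in \Hhc$ has vanishing tangential trace, so the boundary term drops---collapses the two resulting contributions into $\frac{d}{dt} H_M = 2((\B_h)_t, \mathbf{A}_h)$.

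Next I would substitute the evolution equation \eqref{alg:helicity-operator3-b}, which in the ideal limit $\Re^{-1} = \Reminv = 0$ consists only of the time derivative and the two nonlinear terms. The admissible test functions there lie in $\Hhd$ whereas $\mathbf{A}_h \in \Hhc$; the bridge is that $(\B_h)_t \in \Hhd$, so $((\B_h)_t, \mathbf{A}_h) = ((\B_h)_t, \Qd \mathbf{A}_h)$ and I may legitimately take $\C_h = \Qd \mathbf{A}_h$. Because $\nabla \times \Hhc \subseteq \Hhd$, the two nonlinear terms already lie in $\Hhd$, so the projector $\Qd$ may be removed and the curl integrated by parts onto $\mathbf{A}_h$, the boundary term vanishing since $\Qc[\,\cdots\,] \in \Hhc = H_0^h(\curl)$. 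Using $\nabla \times \mathbf{A}_h = \B_h$ and the self-adjointness of $\Qc$ then rewrites each term in the form $(\mathbf{w} \times \Qc \B_h, \Qc \B_h)$, with $\mathbf{w} = \Qc \u_h$ for the advective term and $\mathbf{w} = \j_h$ for the Hall term.

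The proof closes with the pointwise orthogonality $(\mathbf{w} \times \Qc \B_h) \cdot \Qc \B_h = 0$. In particular the Hall contribution reduces to $\RH(\j_h \times \Qc \B_h, \Qc \B_h) = 0$, so the extra Hall term leaves the magnetic helicity untouched, while the advective term vanishes by the same identity, exactly as in \cite{gawlik2020}. Following the convention announced before the theorem, I would display only this Hall cancellation in detail and refer to \cite{gawlik2020} for the remaining term. The one place requiring care---rather than any genuine analytic difficulty---is the projector bookkeeping: checking that $(\B_h)_t \in \Hhd$ permits replacing $\mathbf{A}_h$ by $\Qd \mathbf{A}_h$, that $\nabla \times \Hhc \subseteq \Hhd$ permits its removal before integrating by parts, and that the self-adjointness of $\Qc$ correctly strips the projector off the advected fields.
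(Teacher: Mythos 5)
Your proposal is correct and follows essentially the same route as the paper: the paper's proof (deferring the non-Hall terms to \cite{gawlik2020}, as announced before the theorem) consists precisely of the chain $\RH(\nabla\times\Qc[\j_h\times\Qc\B_h],\mathbf{A}_h) = \RH(\Qc[\j_h\times\Qc\B_h],\B_h) = \RH(\j_h\times\Qc\B_h,\Qc\B_h) = 0$, which is exactly your integration by parts, substitution of $\nabla\times\mathbf{A}_h=\B_h$, self-adjointness of $\Qc$, and pointwise orthogonality. Your additional projector bookkeeping (testing with $\Qd\mathbf{A}_h$ and removing $\Qd$ via $\nabla\times\Hhc\subseteq\Hhd$) matches how the paper handles the analogous step in the $\u\times\n$ case and is a correct, if more explicit, rendering of the same argument.
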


\begin{proof}
	We have to show that the Hall-term vanishes when \eqref{alg:helicity-operator3-b} is tested with a vector-potential $\mathbf{A}_h$. Calculating,
\begin{equation}
	  \RH (\nabla \times \Qc[\j_h \times \Qc \B_h],  \mathbf{A}_h ) =  \RH (\Qc[\j_h \times \Qc \B_h],  \mathbf{B}_h ) =\RH (\j_h \times \Qc \B_h,  \Qc \mathbf{B}_h ) = 0.
\end{equation}
\end{proof}

\begin{remark}
	We discuss why a scheme that conserves hybrid helicity is difficult to construct for the boundary conditions $\u \cdot \n = 0$. First, these boundary conditions naturally fit with $\u_h \in \Hhd$. Therefore, the definition of the discrete hybrid helicity is not straight-forward due to the term $\nabla \times \u$. Two possible choices could be
	\begin{equation}
	    H_{H}:=\int_\Omega (\mathbf{A}_h + \alpha \u_h)\cdot (\B_h + \beta \bm \omega_h) \, \mathrm{d}x,
	\end{equation}
with either $\bm \omega_h = \nabla \times \Qc \u_h$ or $\bm \omega_h = \tilde{\nabla}_h \times \u_h$. The evolution of the fluid helicity would coincide for both definitions since
\begin{equation}
	\frac{d}{dt} (\u_h,  \nabla \times \Qc \u_h) = ((\u_h)_t,  \nabla \times \Qc \u_h) + (\u_h,  \nabla \times \Qc (\u_h)_t) = ((\u_h)_t, \tilde{\nabla}_h \times \u_h  + \nabla \times \Qc \u_h  )
\end{equation}
and 
\begin{equation}
	\frac{d}{dt} (\u_h,  \tilde{\nabla}_h \times \u_h ) = ((\u_h)_t,  \tilde{\nabla}_h \times \u_h ) + (\u_h,  \tilde{\nabla}_h \times (\u_h)_t ) = ((\u_h)_t, \nabla \times \Qc \u_h + \tilde{\nabla}_h \times \u_h ).
\end{equation}
The right-hand side can be modified to $ \nabla \times \Qc \u_h + \Qd^0\tilde{\nabla}_h \times \u_h$, where $\Qd^0$ denotes the projection to the divergence-free functions in $\Hhd$. This ensures that this term is a suitable test function in the velocity equation and that the term $(p_h, \nabla \cdot \v_h)$ vanishes.

An essential step in a proof for the hybrid helicity conservation on the continuous level is that the advection term from the Navier--Stokes equations vanishes when tested against $\bm \omega$, i.e., 	$(\u \times \bm \omega, \bm \omega) = 0$. This already requires a complicated discretization of the advection term. A possible choice could be to approximate $u\times \bm \omega$ by
\begin{equation}
	\frac{1}{2}\Qd[\u \times [ \nabla \times \Qc \u_h + \Qd^0\tilde{\nabla}_h \times \u_h]].
\end{equation}
However, the essence of the conservation proofs is the cancellation of corresponding terms that result from the symmetry in the discretization. That means also the Lorentz force, the Hall-term and magnetic advection terms have to be discretized in a similar complicated way. The authors were not able to find an elegant discretization that does not require the introduction of many additional terms and auxiliary variables.
\end{remark}

Again, to render \eqref{alg:helicity-operator3} computable we introduce auxiliary variables for the projections, yielding: find $(\u_{h}(t), p_h(t), \B_{h}(t), \E_{h}(t), \j_{h}(t), \mathbf{H}_{h}(t),\bm \omega_{h}(t), \mathbf{U}_h(t), \bm \alpha_h(t)) \in$ $\Hhd \times L^2_0(\Omega) \times \Hhd \times [\Hhc]^6 ) $, such that for any $(\v_h, q_h, \C_h, \F_h, \k_h, \bm G_h, \bm \mu_h, \mathbf{V}_h, \bm \beta_h)$ in the same space,
\begin{subequations}\label{alg:helicityudotn}
	\begin{align}\label{alg:helicity-u}
		((\u_{h})_{t}, \v_h)+ (\bm \alpha_h, \v_h), \v_h)+( \nabla \cdot \v_h,  p_{h})&= 0,
		\\
		( \nabla \cdot \u_{h}, q_{h})&=0,
		\\\label{alg:helicity-b}
		((\B_{h})_{t}, \C_h)+(\nabla\times  \E_{h}, \C_h)&=0,\\\label{alg:helicity-j}
		(\j_{h}, \F_h)-(\B_{h},  \nabla\times \F_h)&=0,\\\label{alg:helicity-h1}
		(\bm H_{h}, \bm G_h)-(\B_{h},  \bm G_h)&=0,\\\label{alg:helicity-h2}
		(\bm \omega_{h}, \bm \mu_h)-(\u_{h},  \nabla\times \bm \mu_h)&=0,\\\label{alg:helicity-U2}
		(\mathbf{U}_{h}, \mathbf{V}_h)-(\u_{h},  \mathbf{V}_h)&=0,\\\label{alg:helicity-alpha2}
		(\bm \alpha_{h}, \bm \beta_h)+(\bm \omega_h \times \mathbf{U}_h,  \bm \beta_h)- S (\j_h \times \mathbf{H}_h,  \bm \beta_h)&=0,\\\label{alg:helicity-omega2}
		(\E_{h}, \k_h)-((\RH\,\j_{h}-\mathbf{U}_{h})\times \bm H_{h},  \k_h)&=0.
	\end{align}
\end{subequations}
Now \eqref{alg:helicity-j} gives $\j_{h}=\tilde\nabla_{h}\times \B_{h}$,  \eqref{alg:helicity-h1} gives $\bm H_{h}=\Qc \B_{h}$; \eqref{alg:helicity-h2} gives $\bm \omega_{h}=\tilde{\nabla}_h\times \u_{h}$, \eqref{alg:helicity-U2} gives $\mathbf{U}_{h}=\Qc \u_{h}$ and \eqref{alg:helicity-alpha2} gives $\bm \alpha_{h}= \Qc[(\tilde{\nabla}_h \times \u_h) \times \Qc \u_h] - S \Qc[\j_h \times \Qc \B_h]$.

We use the same time discretization as in Section \ref{sec:schemeutimesn}; compare also to \cite[Section 6]{gawlik2020} for a detailed proof of the next theorem. The proofs for the Hall-term follow immediately from the continuous proofs of Theorem \ref{thm:energyudotn} and Theorem \ref{thm:magheludotn}.

\begin{theorem}
	The time-discretized version of \eqref{alg:helicityudotn} preserves the energy and magnetic helicity precisely and enforces $\div \B_h=\div \u_h=0$  for all time steps; i.e. for all $ k \geq 0$ there holds
	
	\begin{align}
		\int_\Omega \mathbf{u}^{k+1}_h  \cdot \u_h^{k+1} + S \mathbf{B}^{k+1}_h  \cdot \B_h^{k+1} \mathrm{d} x &= \int_\Omega  \mathbf{u}^{k}_h  \cdot \u_h^{k} + S \mathbf{B}^{k}_h  \cdot \B_h^{k} \mathrm{d} x,\\	
		\int_\Omega \mathbf{A}^{k+1}_h  \cdot \B_h^{k+1} \mathrm{d} x &= \int_\Omega \mathbf{A}^{k}_h  \cdot \B_h^{k} \mathrm{d} x, \label{eq:magheltime}\\
		\div \u^k_h = 0,\\
		\div \B^k_h = 0.
	\end{align}
\end{theorem}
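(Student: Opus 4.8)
The plan is to transfer the two semi-discrete identities of Theorems~\ref{thm:energyudotn} and~\ref{thm:magheludotn} to the fully discrete level, exploiting the fact that the implicit midpoint discretization turns the continuous chain rule $\frac{d}{dt}\|w\|^{2}=2(\partial_t w,w)$ into the \emph{exact} algebraic identity $\frac{1}{\Delta t}(\|w^{k+1}\|^{2}-\|w^{k}\|^{2})=2(D_t w,w^{k+1/2})$. Since every nonlinear form in \eqref{alg:helicityudotn} is frozen at the midpoint values $\u_h^{k+1/2},\B_h^{k+1/2}$ and at the auxiliary quantities $\j_h^{k+1/2},\E_h^{k+1/2},\mathbf{H}_h^{k+1/2},\bm\omega_h^{k+1/2},\mathbf{U}_h^{k+1/2},\bm\alpha_h^{k+1/2}$, the antisymmetry cancellations of the continuous proofs carry over verbatim. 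I would first dispatch the two decoupled divergence constraints. For $\div\u_h=0$, the pairs with $\nabla\cdot\V_h\subset Q_h$ allow testing the discrete continuity equation with $q_h=\nabla\cdot\u_h^{k+1/2}$, forcing $\nabla\cdot\u_h^{k+1/2}=0$; since $\div\u_h^0=0$, the recursion $\div\u_h^{k+1}=-\div\u_h^k$ gives $\div\u_h^{k}=0$ for all $k$. For $\div\B_h=0$, testing \eqref{alg:helicity-b} with $\C_h=D_t\B_h+\nabla\times\E_h^{k+1/2}$, which lies in $\Hhd$ because $\nabla\times\Hhc\subseteq\Hhd$, forces the pointwise relation $D_t\B_h=-\nabla\times\E_h^{k+1/2}$, so that $\div\B_h^{k+1}=\div\B_h^k$ and the constraint propagates from $\div\B_h^0=0$.

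For the energy identity I would test the momentum equation with $\u_h^{k+1/2}$ and the magnetic equation \eqref{alg:helicity-b} with $S\B_h^{k+1/2}$. The discrete Leibniz rule converts the two time-derivative terms into $\tfrac{1}{2\Delta t}(\|\u_h^{k+1}\|^2-\|\u_h^k\|^2)$ and its magnetic analogue. Writing $\bm\alpha_h=\Qc[\bm\omega_h\times\mathbf{U}_h]-S\Qc[\j_h\times\mathbf{H}_h]$, the self-advection part gives $(\bm\omega_h\times\mathbf{U}_h,\mathbf{U}_h)=0$ since $\mathbf{U}_h=\Qc\u_h^{k+1/2}$ and $(a\times b)\cdot b=0$, while the pressure term vanishes because $\div\u_h^{k+1/2}=0$. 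The remaining Lorentz contribution $S(\j_h\times\mathbf{H}_h,\mathbf{U}_h)$ must cancel the induction term of the magnetic equation: integrating by parts via $(\nabla\times\E_h,\B_h^{k+1/2})=(\E_h,\tilde\nabla_h\times\B_h^{k+1/2})=(\E_h,\j_h^{k+1/2})$ using \eqref{alg:helicity-j}, and substituting $\E_h=(\RH\j_h-\mathbf{U}_h)\times\mathbf{H}_h$ from \eqref{alg:helicity-omega2}, the Hall part $\RH(\j_h\times\mathbf{H}_h,\j_h)$ vanishes by antisymmetry (as in Theorem~\ref{thm:energyudotn}), leaving $(\E_h,\j_h^{k+1/2})=(\j_h\times\mathbf{H}_h,\mathbf{U}_h)$. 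Summing the two tested equations cancels the coupling terms and yields the stated energy conservation.

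For magnetic helicity I would first invoke the exactness of the discrete de~Rham complex \eqref{eqn:derhamfe} on the contractible domain to obtain, for each $k$, a discrete potential $\mathbf{A}_h^{k}\in\Hhc$ with $\nabla\times\mathbf{A}_h^{k}=\B_h^{k}$ (available once $\div\B_h^{k}=0$). The midpoint Leibniz rule gives
\[
\tfrac{1}{\Delta t}\big[(\mathbf{A}_h^{k+1},\B_h^{k+1})-(\mathbf{A}_h^{k},\B_h^{k})\big]=(D_t\mathbf{A}_h,\B_h^{k+1/2})+(\mathbf{A}_h^{k+1/2},D_t\B_h).
\]
Because $\nabla\times$ commutes with $D_t$ and both arguments lie in $\Hhc$, integration by parts shows both terms equal $(D_t\B_h,\mathbf{A}_h^{k+1/2})$; using $D_t\B_h=-\nabla\times\E_h^{k+1/2}$ together with $\E_h\in\Hhc$ and $\mathbf{H}_h=\Qc\B_h$ from \eqref{alg:helicity-h1}, this reduces to $-(\E_h^{k+1/2},\mathbf{H}_h^{k+1/2})$, which vanishes because $\E_h=(\RH\j_h-\mathbf{U}_h)\times\mathbf{H}_h$ is orthogonal to $\mathbf{H}_h$ (Theorem~\ref{thm:magheludotn}). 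Hence the magnetic helicity is conserved.

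The main obstacle is structural bookkeeping rather than analysis: one must verify that every factor of each nonlinear term is consistently frozen at level $k+1/2$ so that the projections $\Qc$ and the operator $\tilde\nabla_h\times$ fall on the correct arguments, and that the exact discrete Leibniz identity applies to each quadratic invariant. Once the pointwise constraint propagation $D_t\B_h=-\nabla\times\E_h^{k+1/2}$ and the midpoint Leibniz rule are in hand, no difficulty beyond those already treated at the semi-discrete level remains.
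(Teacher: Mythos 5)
Your proposal is correct and follows essentially the same route as the paper, which itself only sketches this result by deferring to the analogous proof of Theorem~\ref{thm:timeconsutimesn} and to the continuous arguments of Theorems~\ref{thm:energyudotn} and~\ref{thm:magheludotn}: the discrete Leibniz rule for midpoint averages, the pointwise constraint propagation $D_t\B_h=-\nabla\times\E_h^{k+1/2}$, and the cancellations $(\bm\omega_h\times\mathbf{U}_h,\mathbf{U}_h)=0$, $\RH(\j_h\times\mathbf{H}_h,\j_h)=0$ and $(\E_h,\mathbf{H}_h)=0$ are exactly the ingredients intended. Your version simply supplies the bookkeeping the paper omits (note only that you read $\bm\alpha_h$ with the sign matching the semi-discrete form \eqref{alg:helicity-operator3-u}, which is the intended one despite the sign written in \eqref{alg:helicity-alpha2}).
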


\section{Augmented Lagrangian preconditioner}\label{sec:ALP}
In this section, we derive block preconditioners for the stationary and time-dependent versions of the Picard and Newton linearizations from  Algorithm \ref{alg:picard-s} and  Algorithm \ref{alg:newton-s}. In each nonlinear step, we have to solve a linear system of the form

\begin{equation}
	\label{eq:matrix_upBE}
	\begin{bmatrix}
		\mathcal{F} & \BB^\top &  \mathbf{0} & \tilde{\KK} & \KK\\
		\BB & \mathbf{0} & \mathbf{0} & \mathbf{0} & \mathbf{0} \\
				\mathbf{0}& \mathbf{0}& \mathbf{0} & -\AA & \MM \\
		\mathbf{0} & \mathbf{0} & \DD  & \CC & \mathbf{0}\\
		-\GG& \mathbf{0}& -\mathcal{P} & -\tilde{\GG} + \tilde{\NN}  & \LL + \NN
	\end{bmatrix}
	\begin{bmatrix}
		x_{\u_h} \\ x_{p_h} \\ x_{\E_h} \\ x_{\B_h} \\ x_{\j_h}
	\end{bmatrix} =
	\begin{bmatrix}
		R_{\u_h} \\ R_{p_h} \\ R_{\E_h} \\ R_{\B_h} \\ R_{\j_h}
	\end{bmatrix},
\end{equation}
where $x_{\u_h}$, $x_{p_h}$, $x_{\E_h}$, $x_{\B_h}$ and $x_{\j_h}$ are the coefficients of the discretized Newton corrections and $R_{\u_h}$, $R_{p_h}$, $R_{\E_h}$, $R_{\B_h}$ and $R_{\j_h}$ the corresponding nonlinear residuals.
The correspondence between the discrete and continuous operators is illustrated in Table \ref{tab:Operators}. We have chosen the notation that operators that include a tilde are omitted in the Picard linearization from Algorithm $\ref{alg:picard-s}$. In the time-dependent case, the terms ${(\Delta t})^{-1} \u^n_h$ and $({\Delta t})^{-1} \B^n_h$ are added to $\mathcal{F}$ and $\mathcal{C}$, respectively.

\begin{table}[htb!]
	\centering
	\begin{tabular}{c|c|c}
		\toprule
		\textbf{Discrete} & \textbf{Continuous} & \textbf{Weak form}\\
		\midrule
		$\mathcal{F} \u^n_h$ & $-\frac{1}{\Re} \Delta \u_h^n + \u_h^{n-1}\cdot \nabla \u_h^n + \u_h^n \cdot \nabla \u_h^{n-1}$ & $\frac{1}{\Re}(\nabla\u_h^n, \nabla \v_h) + (\u_h^{n-1}\cdot \nabla \u_h^{n} , \v_h)$  \\
		& $-\gamma\nabla \nabla \cdot \u_h^n$ &  $+(\u_h^n\cdot \nabla \u_h^{n-1}, \v_h) + \gamma(\nabla \cdot \u_h^n, \nabla \cdot \v_h)  $  \\
		$\KK \j^n_h$& $-S\, \j^n_h \times \B^{n-1}_h$ & $ -S\, (\j^n_h \times \B^{n-1}_h, \v_h)$ \\
		$\tilde{\KK} \B^n_h$& $-S\, \j^{n-1}_h \times \B^{n}_h$ & $ -S\, (\j^{n-1}_h \times \B^{n}_h, \v_h)$ \\
		$\BB^\top p^n_h$ & $\nabla p^n_h$ & $-(p^n_h, \div \v_h)$ \\
		$\BB \u^n_h$ & $-\div \u^n_h$ & $-(\div \u^n_h, q)$  \\
		$\LL \j^n_h$ & $ \frac{1}{\Rem} \j^n_h$ & $\frac{1}{\Rem} (\j^n_h, \k_h)$ \\
		$\mathcal{P} \E^n_h$ & $\E^n_h$ & $ (\E^n_h, \k_h)$ \\
		$\GG \u^n_h$ & $\u^n_h \times \B^{n-1}_h$ & $(\u^n_h\times\B^{n-1}_h, \k_h)$  \\
		$\tilde{\GG} \B^{n}_h$ & $\u^{n-1}_h \times \B^{n}_h$ & $(\u^{n-1}_h\times\B^{n}_h, \k_h)$  \\
		$\NN \j^n_h$ & $\RH\, \j^n_h \times \B^{n-1}_h$ & $\RH\,(\j^n_h\times\B^{n-1}_h, \k_h)$  \\
		$\tilde{\NN} \B^{n}_h$ & $\RH\, \j^{n-1}_h \times \B^{n}_h$ & $\RH\,(\j^{n-1}_h\times\B^{n}_h, \k_h)$  \\
		$\DD \E^n_h$ & $ \nabla \times \E^n_h$ & $ (\nabla \times \E^n_h, \C_h)$ \\
		$\CC \B^n_h$ & $-\nabla \nabla \cdot \B^n_h$ & $(\nabla \cdot \B^n_h,\nabla \cdot \C_h)$  \\
		$\MM \j^n_h$ & $\j^n_h$ & $ (\j^n_h, \F_h)$ \\
		$\A \B^n_h$ & $ \nabla \times \B^n_h$ & $ (\B^n_h, \nabla \times  \F_h)$ \\
		\bottomrule
	\end{tabular}
	\caption{Overview of operators. For implicit Euler, the terms ${(\Delta t})^{-1} \u^n_h$ and $({\Delta t})^{-1} \B^n_h$ are added to $\mathcal{F}$ and $\mathcal{C}$, respectively.}
	\label{tab:Operators}
\end{table}
The following preconditioning approach is similar to one developed in \cite{laakmann2021} for the standard incompressible resistive MHD equations. The main idea is to do a Schur complement approximation which separates the hydrodynamic and electromagnetic unknowns and then to apply parameter-robust multigrid methods to the different subproblems.

We start by simplifying the outer Schur complement that eliminates the $(\u_h, p_h)$ block given by
\begin{equation}\label{eq:outerSchurCompup}
	\mathcal{S}^{(\u_h, p_h)} =
	\begin{bmatrix}
			\mathbf{0} & -\AA & \MM\\
				\DD & \CC & \mathbf{0}\\
		-\mathcal{P}& -\tilde{\GG} + \tilde{\NN} & \LL + \NN
	\end{bmatrix}
	-
	\begin{bmatrix}
		\zerov & \zerov\\
		\zerov & \zerov\\
		-\GG & \zerov
	\end{bmatrix}
	\begin{bmatrix}
		\FF & \BB^\top \\
		\BB & \mathbf{0}
	\end{bmatrix}^{-1}
	\begin{bmatrix}
		\zerov & \tilde{\KK} & \KK \\
		\zerov & \zerov & \zerov
	\end{bmatrix}.
\end{equation}
Applying the identity
\begin{equation}
	\label{eq:matrixinvers}
	\begin{bmatrix}
		A & B \\
		C & D
	\end{bmatrix}^{-1} = \begin{bmatrix}
		A^{-1} + A^{-1} B (D - C A^{-1} B)^{-1} C A^{-1} &
		-A^{-1}B(D-C A^{-1} B)^{-1} \\
		- (D - C A^{-1} B)^{-1} C A^{-1}&
		(D-C A^{-1}B)^{-1}
	\end{bmatrix}
\end{equation}
for non-singular matrices $A$ and $D-C A^{-1} B$ to the $(\u_h, p_h)$ block results in
\begin{equation}\label{eq:outerSchurCompup2}
	\mathcal{S}^{(\u_h, p_h)} =
	\begin{bmatrix}
 		\mathbf{0} & -\AA & \MM \\
		\DD & \CC & \mathbf{0} \\
				-\mathcal{P}& -\tilde{\GG} + \tilde{\NN} + \GG \mathcal{S}^{-1}_{1,1} \tilde{K} & \LL + \NN + \GG \mathcal{S}^{-1}_{1,1} \KK 
	\end{bmatrix}
\end{equation}
with 
\begin{equation}
	\mathcal{S}^{-1}_{1,1} = \FF^{-1} - \FF^{-1}  \BB^\top (\BB \FF^{-1} \BB^T)^{-1} \BB \FF^{-1}.
\end{equation}
Note that the magnitude of the matrices $\GG \mathcal{S}^{-1}_{1,1} \tilde{K}$ and $\GG \mathcal{S}^{-1}_{1,1} \KK $ is approximately a factor of $\mathcal{O}(h^2)$ smaller than of the other matrices at the corresponding entries. Therefore, a good approximation for a reasonably refined mesh and moderate coupling numbers $S$ is given by
\begin{equation}\label{eq:outerSchurCompupapprox}
	\mathcal{S}^{(\u_h, p_h)}_\text{approx} =
	\begin{bmatrix}
 			\mathbf{0} & -\AA & \MM\\
 \DD & \CC & \mathbf{0}\\
 -\mathcal{P}& -\tilde{\GG} + \tilde{\NN} & \LL + \NN
	\end{bmatrix}.
\end{equation}

The treatment of the hydrodynamic block 
\begin{equation}\label{eq:Schurcomphycdro}
        \mathcal{M}_{NS}=	
		\begin{bmatrix}
			\FF & \BB^\top \\
			\BB & \mathbf{0}
		\end{bmatrix}
\end{equation}
coincides with the one described in \cite[Section 3.4]{laakmann2021}. Therefore, we also add the augmented Lagrangian term $\gamma (\nabla \cdot \u^n_h, \nabla \cdot \v^n_h)$ to the velocity equation with a large $\gamma$ to gain control over the Schur complement of \eqref{eq:Schurcomphycdro}. Moreover, we use an $\Hd$-conforming discretization of $\u_h$ \cite[Section 2.3]{laakmann2021} to allow the use of parameter-robust multigrid methods that can deal with the non-trivial kernels of the occurring semi-definite terms; for more information about this topic we refer to \cite{schoberl1999b}.

We found that applying the same parameter-robust multigrid methods monolithically to the Schur complement approximation $\tilde{\mathcal{S}}^{(\u_h, p_h)}$ shows good results for the three dimensional lid-driven cavity problem as long as $\Rem$, $\S$ and $\RH$ are not chosen too high at the same time. 

For completeness, we also outline the block structure of the 2.5D formulation introduced in Section \ref{sec:2.5Dform}. We use $\eta \in \{0,1\}$ to distinguish between the stationary $(\eta=0)$ and transient $(\eta=1)$ cases. The hydrodynamic block 	$\begin{bmatrix}
	\FF & \BB^\top \\
	\BB & \mathbf{0}
\end{bmatrix}$ arises now as the discretization of the forms 
\begin{equation} 
	\begin{bmatrix}
		A_1 & \mathbf{0} & (p_h, \nabla \cdot \tilde{\v}_h)\\
		\mathbf{0} & A_2 &  \mathbf{0} \\
		(\nabla \cdot \tilde{\u}_h, q_h) &  \mathbf{0} &  \mathbf{0}
	\end{bmatrix}
\end{equation}
with
\begin{align*}
	A_1 &= 	\frac{\eta}{\Delta t} (\tilde{\u}^n_h, \tilde{\v}_h) -\frac{1}{\Re} (\nabla \tilde{\u}^n_h, \nabla \tilde{\v}_h) + 	( (\tilde{\u}^n_h \cdot \tilde{\nabla}) \tilde{\u}^{n-1}_h, \tilde{\v}_h) + ((\tilde{\u}^{n-1}_h \cdot \tilde{\nabla}) \tilde{\u}^{n}_h, \tilde{\v}_h)\\
	&  + \gamma (\nabla \cdot \tilde{\u}^n_h, \nabla \cdot \tilde{\v}^n_h), \\
	A_2 &= \frac{\eta}{\Delta t} (u^n_{3h}, v_{3h}) -\frac{1}{\Re} (\nabla u^n_{3h}, \nabla v_{3h}) + 	( (\tilde{\u}^n_h \cdot \tilde{\nabla}) u_{3h}^{n-1}, v_{3h}) + ((\tilde{\u}^{n-1}_h \cdot \tilde{\nabla}) u^n_{3h}, v_{3h}).
\end{align*}

Furthermore,
\begin{equation}
	\begin{bmatrix}
		\mathbf{0} & \tilde{\KK} & \KK \\
		\mathbf{0} & \mathbf{0} & \mathbf{0} \\
	\end{bmatrix}, 
	\begin{bmatrix}
		\mathbf{0} & \mathbf{0} \\
		\mathbf{0} & \mathbf{0} \\
		-\GG & \mathbf{0}
    \end{bmatrix}
    \text{ and }
    \begin{bmatrix}
    	\mathbf{0} & -\AA & \MM \\
    	\DD & \CC & \mathbf{0} \\
    	-\mathcal{P}& -\tilde{\GG} + \tilde{\NN} & \LL + \NN 
    \end{bmatrix}
\end{equation}
correspond to 
\begin{equation} 
	\resizebox{\textwidth}{!}{$
		\begin{bmatrix}
			\mathbf{0}  &  \mathbf{0}  & S(\tilde{\B}^n_h\times j^{n-1}_{3h}, \tilde{\v}_h) &  -S(\tilde{\j}^{n-1}_h \times \B^{n}_{3h}, \tilde{\v}_h) &  -S (\tilde{\j}^n_h \times \B^{n-1}_{3h}, \tilde{\v}_h)& S(\tilde{\B}^{n-1}_h\times j^{n}_{3h}, \tilde{\v}_h) \\
			\mathbf{0} & \mathbf{0} & -S (\tilde{\j}^{n-1}_{h}\times \tilde{\B}^{n}_h, v_{3h}) & \mathbf{0} &  -S (\tilde{\j}^{n}_{h}\times \tilde{\B}^{n-1}_h, v_{3h}) & \mathbf{0} \\
			\mathbf{0} & \mathbf{0} & \mathbf{0} & \mathbf{0} &  \mathbf{0} & \mathbf{0} \\
		\end{bmatrix}$},
\end{equation}

\begin{equation} 
	\begin{bmatrix}
		\mathbf{0} & \mathbf{0} & \mathbf{0}\\
		\mathbf{0} & \mathbf{0} & \mathbf{0}\\
		\mathbf{0} & \mathbf{0} & \mathbf{0}\\
		-(\tilde{\u}^n_h \times B^{n-1}_{3h}, \k_h) & (\tilde{\B}^{n}_h \times u^{n-1}_{3h}, \k_h) & \mathbf{0}\\
		-(\tilde{\u}^n_h \times \tilde{\B}^{n-1}_h,k_{3h})& \mathbf{0} & \mathbf{0}
	\end{bmatrix}
\end{equation}

and 

\begin{equation}
	\resizebox{\textwidth}{!}{$
		\begin{bmatrix}
			\mathbf{0} & \mathbf{0} &\mathbf{0} & -(B^n_{3h}, \scurl \tilde{\F}_h) &  (\tilde{\j}^n_h, \tilde{\F}_h)&\mathbf{0} \\
			\mathbf{0} & 	\mathbf{0} & -(\tilde{\B}^n_h, \vcurl F_{3h}) &	\mathbf{0} & 	\mathbf{0} &	(j^n_{3h}, F_{3h}) \\
			\mathbf{0} & (\vcurl E^n_{3h} , \tilde{\C}_h) & \substack{\frac{\eta}{\Delta t} (\tilde{\B}^n_h, \tilde{\C}_h)\\+ \frac{1}{\Rem} (\nabla \cdot \tilde{\B}^n_h, \nabla \cdot \tilde{\C}_h)}& \mathbf{0} & \mathbf{0} & \mathbf{0} \\ 
			(\scurl \tilde{\E}^n_h, \C_{3h}) & \mathbf{0}& \mathbf{0}&  \frac{\eta}{\Delta t} (B^n_{3h}, \C_{3h}) &  \mathbf{0}& \mathbf{0}\\
			-(\tilde{\E}^n_h, \k_h) & \mathbf{0} &\substack{(\tilde{\B}^n_h \times u^{n-1}_{3h}, \k_h) \\- \RH (\tilde{\B}^n_h \times j^{n-1}_{3h}, \k_h)} & 
			\substack{(\tilde{\u}^{n-1}_h \times B^n_{3h}, \k_h) \\+ \RH (\tilde{\j}^{n-1}_h \times B^n_{3h},\k_h)} &  \substack{\frac{1}{\Rem} (\tilde{\j}^n_h, \k_h) \\ + \RH (\tilde{\j}^n_h \times B^{n-1}_{3h}, \k_h)} & -\RH (\tilde{\B}^{n-1}_h \times j^n_{3h}, \k_h)  \\ 
			\mathbf{0} & -(E^n_{3h}, k_{3h}) & \substack{- (\tilde{\u}^{n-1}_h \times \tilde{\B}^n_h, k_{3h}) \\+ \RH (\tilde{\j}^n_h  \times \tilde{\B}^{n-1}, k_{3h})} &\mathbf{0}  & \RH(\tilde{\j}^n_h \times \tilde{\B}^{n-1}_h, k_{3h}) & \frac{1}{\Rem} (j^n_{3h}, k_{3h})
		\end{bmatrix}$}.
\end{equation}
Our numerical experiments suggest that the same outer Schur complement approximation (now applied to the blocking $(\tilde{\u}_h, u_{3h}, p_h)$ and $(\tilde{\B}_h,B_{3h},\tilde{\E}_h,E_{3h},\tilde{\j}_h,j_{3h})$) still works well for the 2.5D case. However, we observe poor performance of the monolithic multigrid method applied to this block for an island coalescence and $\RH>0.01$. Robust solvers for this inner problem require further investigation and we apply a direct solver to this block in the 2.5D numerical results in the next section.

\section{Numerical Results}\label{sec:numericalresults}
The following numerical results were implemented in Firedrake \cite{rathgeber2016}, which uses the solver package PETSc \cite{balay2019} and the implementation of parameter-robust multigrid methods from PCPATCH \cite{farrell2019pcpatch}. Moreover, we replaced the Laplace term $-\Delta \u$ in our implementation by $-2\nabla \cdot \varepsilon(\u)$, where $\varepsilon(\u) := 1/2(\nabla \u + \nabla \u^\top)$ denotes the symmetric gradient. This allows us to also consider alternative boundary conditions 
\begin{equation}
	\u = \mathbf{0} \text{ on } \Gamma_D , \qquad \frac{2}{\Re}  \varepsilon( \u )\cdot \n = p \n \text{ on } \Gamma_N
\end{equation}
with $\Gamma_D\cup \Gamma_N = \partial \Omega$.
Note that both formulations are equivalent for the boundary conditions $\u=\mathbf{0}$ on $\partial \Omega$ which we consider in this paper \cite[Chap.~15]{Quarteroni2017}.

\subsection{Verification and convergence order}\label{sec:verificationandconvergenceorder}
In the first example, we consider the method of manufactured solutions for a smooth given solution to verify the implementation of our solver and report convergence rates. We employ the Picard iteration for the stationary problem from Algorithm \ref{alg:picard-s}.
The right-hand sides and boundary conditions are calculated corresponding to the analytical solution
\begin{gather}
	\begin{split}
	\u(x,y,z) = \begin{pmatrix}
       \cos(y) \\ \sin(z) \\ \exp(x)
	\end{pmatrix}, \quad
	p(x,y,z) = y \sin(x) \exp(z), \quad 
	\B(x,y,z)  = \begin{pmatrix}
        \sin(z) \\ \sin(x) \\ \cos(y)
	\end{pmatrix},\\
    \E(x,y,z) = \begin{pmatrix}
	x \sin(x) \\ \exp(y) \\ z^3
    \end{pmatrix}, \quad
    \j(x,y,z) = \begin{pmatrix}
    	\cos(y z) \\ \exp(xz) \\ \sinh(x)
    \end{pmatrix}.  \qquad \qquad \qquad \quad
    \end{split}
\end{gather}
We used second order $\mathbb{BDM}$-elements for $\u_h$, second order $\mathbb{NED}1$-elements for $\E_h$ and $\j_h$, second order $\mathbb{RT}$-elements for $\B_h$ and first order $\mathbb{DG}$-elements for $p_h$ on $\Omega=[0,1]^3$.
Based on the standard error estimates for these spaces, one would expect third order convergence in the $L^2$-norm for $\u_h$ and  second order convergence for $p_h$, $\B_h$, $\E_h$ and $\j_h$. This is numerically verified by Table \ref{tab:convOrder}.
\begin{table}[!htb]
	\begin{centering}
			\resizebox{\textwidth}{!}{
		\begin{tabular}{c |c c|c c|c c|c c|c c} 
			\toprule
			h & $\|\u-\u_h\|_0$ & rate & $\|p-p_h\|_0$ & rate & $\|\B-\B_h\|_0$ & rate & $\|\E-\E_h\|_0$ & rate & $\|\j-\j_h\|_0$ & rate  \\
			\midrule
			1/4 & 3.08E-04 & - & 3.52E-02 & - & 2.44E-03 & - & 9.57E-03 & - & 6.77E-03 & - \\
			1/8 & 4.50E-05 & 2.78 & 6.58E-03 & 2.42 & 6.04E-04 & 2.02 & 2.50E-03 & 1.93 & 1.79E-03 & 1.92 \\
			1/16 & 5.99E-06 & 2.91 & 1.36E-03 & 2.27 & 1.50E-04 & 2.01 & 6.32E-04 & 1.99 & 4.53E-04 & 1.98 \\
			1/32 &7.72E-07 & 2.96 & 2.99E-04 & 2.19 & 3.74E-05 & 2.00 & 1.58E-04 & 2.00 & 1.14E-04 & 1.99 \\
			\bottomrule
		\end{tabular}}
		\caption{$L^2$-error and convergence order}
		\label{tab:convOrder}
	\end{centering}
\end{table}

\subsection{Lid-driven cavity problem}
Next, we consider a lid-driven cavity problem for a background magnetic field $\B_0=(0,0,1)^\top$ which determines the boundary conditions $\B\cdot \n = \B_0\cdot \n$ on $\partial \Omega$ and set $\f = \mathbf{0}$ for $\Omega=(-0.5, 0.5)^3$. The boundary condition $\u=(1,0,0)^\top$ is imposed at the boundary $y=0.5$ and homogeneous boundary conditions elsewhere. The problem models the flow of a conducting fluid driven by the movement of the lid at the top of the cavity. The magnetic field imposed orthogonal to the lid creates a Lorentz force that perturbs the flow of the fluid. 

Since we consider non-homogeneous boundary conditions in this problem the boundary conditions for $\E$ and $\j$ have to be chosen in a compatible way, which we derive in the following. 
From \eqref{eq:HallMHDj} we can deduce the necessary condition that 
\begin{equation}\label{eq:bcscompatibility}
	\Reminv \j \times \n = \E \times \n + (\u \times \B) \times \n - \RH (\j \times \B) \times \n
\end{equation}
has to hold on $\partial \Omega$. 

On a face that does not correspond to $y=0.5$, we have $\u = (0,0,0)^\top$. Then it is clear that \eqref{eq:bcscompatibility} is fulfilled if we choose $\E\times\n=\j\times\n=\mathbf{0}$ on these faces.

On the face $y=0.5$, we have that $\n = (0,1,0)^\top$ and hence \eqref{eq:bcscompatibility} simplifies to
\begin{equation}
	\begin{cases}
		&	\Reminv  j_3 = -E_3 -1 + \RH j_1\\
		&	\Reminv  j_2 = E_2\\
		&	\Reminv  j_1 = E_1 + \RH j_3
	\end{cases}
\end{equation}
If we choose $\E\times\n=\mathbf{0}$ it follows that 
\begin{equation}
	\j \times \n =\frac{1}{\Reminv + \Rem \RH^2} 
	\begin{pmatrix}
		\Rem \RH \\ 0 \\ 1
	\end{pmatrix} 
    \times \n.
\end{equation}

In Table \ref{tab:ldcstatRERHall}, we present iteration numbers for the Picard and Newton linearizations for the stationary version of the lid-driven cavity problem. Here, we have used the same elements for $\u_h$, $\B_h$, $\E_h$ and $\j_h$ and $p_h$ as in the previous example. Moreover, we have used a coarse mesh of $6\times6\times6$ cells and 3 levels of refinement for the multigrid method resulting in an $48\times48\times48$ mesh and 29.2 million degrees of freedom. One can observe good robustness in the reported ranges of $\RH$ for both linearizations. The Newton linearization shows slightly better non-linear convergence, while the linear iterations are slightly smaller in most cases for the Picard iteration.

Table \ref{tab:ldctimeRERHall} shows the corresponding results for the time-dependent version of the lid-driven cavity problem. Here, we have chosen a time step of $\Delta t=0.01$ and iterated until the final time of $T=0.1$. We iterated some of the cases until the final time of $T=1.0$ to confirm that the reported iteration numbers remain representative for longer final times. We have chosen the L-stable BDF2 method for the time-discretization where the first time step was computed by Crank--Nicolson. We observe good robustness in both the nonlinear and linear iteration numbers for this problem.

\begin{table}[htbp!]
	\centering
		\begin{tabular}{r|ccc|ccc}
			\toprule
			& \multicolumn{3}{c|}{Picard} & \multicolumn{3}{c}{Newton} \\
			\midrule
			$\RH\backslash\Re$  &1 &     100 &    1,000 &1 & 100&     1,000  \\
			\midrule
            0.0 & ( 4) 4.8 & ( 4) 5.5 & ( 4)10.0 & ( 3) 6.0 & ( 4) 4.3 & ( 4) 8.8 \\
            0.1 & ( 4) 5.0 & ( 4) 4.8 & ( 4)10.0 & ( 3) 6.0 & ( 4) 4.3 & ( 4) 9.3 \\
            1.0 & ( 4) 5.3 & ( 4) 4.5 & ( 5)10.2 & ( 3) 5.0 & ( 4) 4.3& ( 4) 12.0 \\

			\bottomrule	
		\end{tabular}
	\caption{Iteration counts for the stationary lid-driven cavity problem. The entries of the table correspond to: (Number of nonlinear iterations) Average number of linear iterations per nonlinear step.\label{tab:ldcstatRERHall}\newline}
	
		\begin{tabular}{r|ccc|ccc}
			\toprule
			& \multicolumn{3}{c|}{Picard} & \multicolumn{3}{c}{Newton} \\
			\midrule
			$\RH\backslash\Re$  &1 &     1,000 &    10,000 & 1 & 1,000 & 10,000 \\
			\midrule
            0.0 & (3.0) 5.6 & (3.1) 2.2 & (3.2) 2.0 & (2.1) 7.5 & (3.1)  2.2 & (3.2)  2.0 \\ 
            0.1 & (3.0) 5.6 & (3.1) 2.2 & (3.2) 2.0 & (2.1) 7.5 & (3.1)  2.2 & (3.2) 2.0 \\ 
            1.0 & (3.0) 5.8 & (3.1) 2.2 & (3.2) 2.0 & (2.2) 7.3 & (3.1)  2.2 & (3.2) 2.0\\ 

			\bottomrule	
	\end{tabular}

	\caption{Iteration counts for the time-dependent lid-driven cavity problem.\label{tab:ldctimeRERHall}}

\end{table}

\newcolumntype{C}{ >{\centering\arraybackslash} m{3.0cm} }
\newcolumntype{D}{ >{\centering\arraybackslash} m{2cm} }

\begin{figure}[htbp!]
	\centering
	\begin{tabular}{|D |C |C |C |C |}
		\hline
	$\Rem = 10$ &	\includegraphics[width=2.5cm]{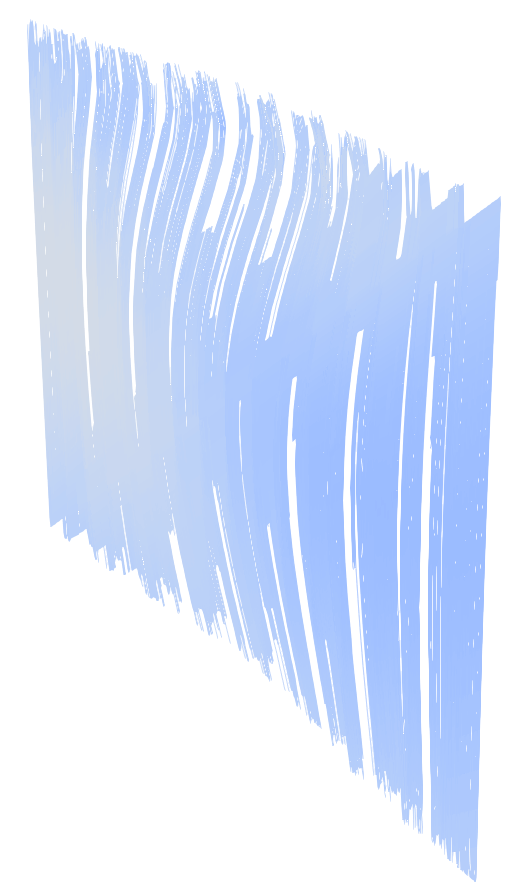} &
		\includegraphics[width=2.5cm]{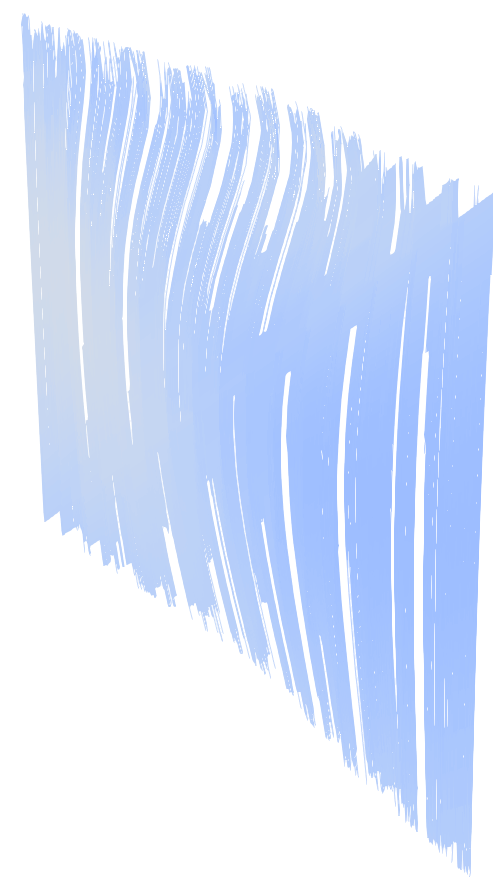} &
		\includegraphics[width=2.5cm]{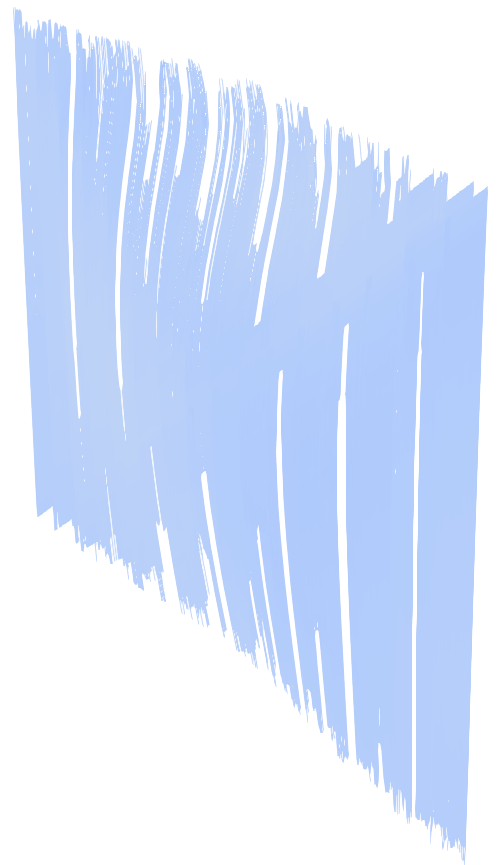} &
		\includegraphics[width=2.5cm]{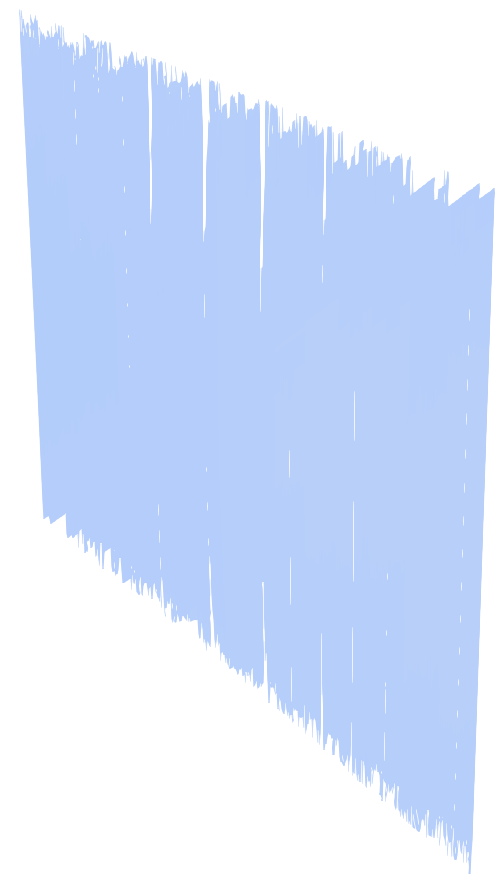} \\
		\hline
	$\Rem = 50$ &	\includegraphics[width=2.5cm]{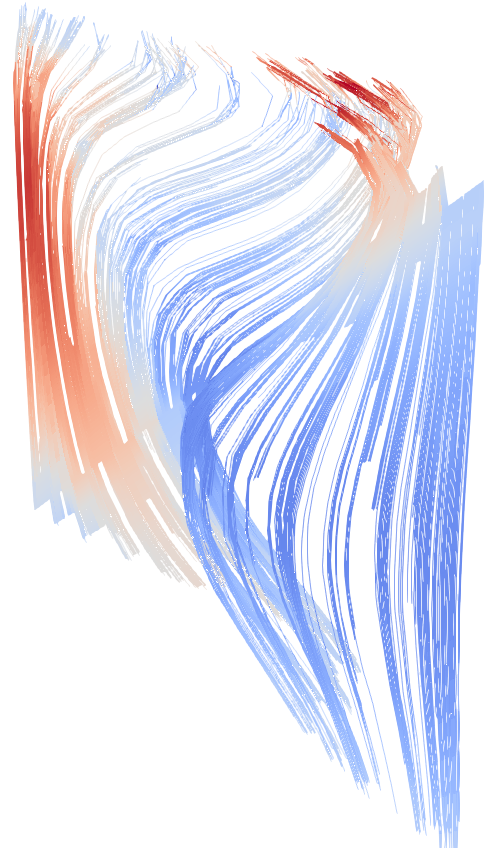} &
		\includegraphics[width=2.5cm]{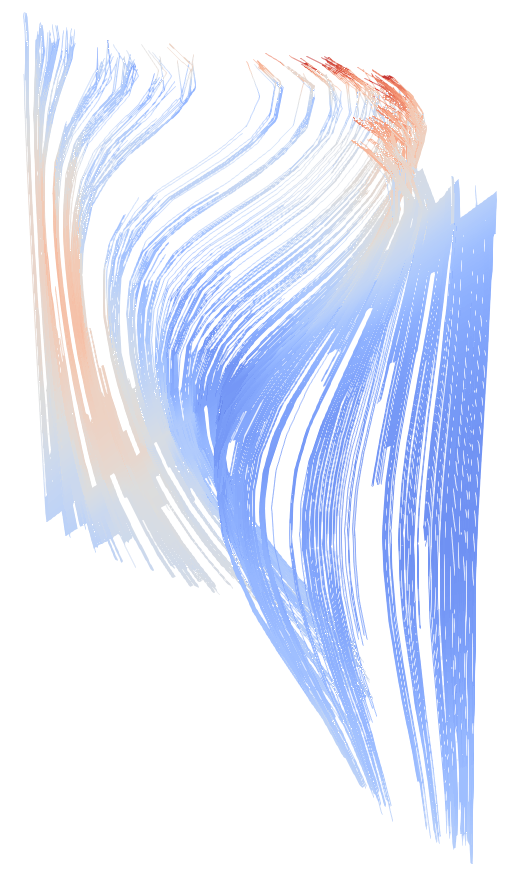} &
		\includegraphics[width=2.5cm]{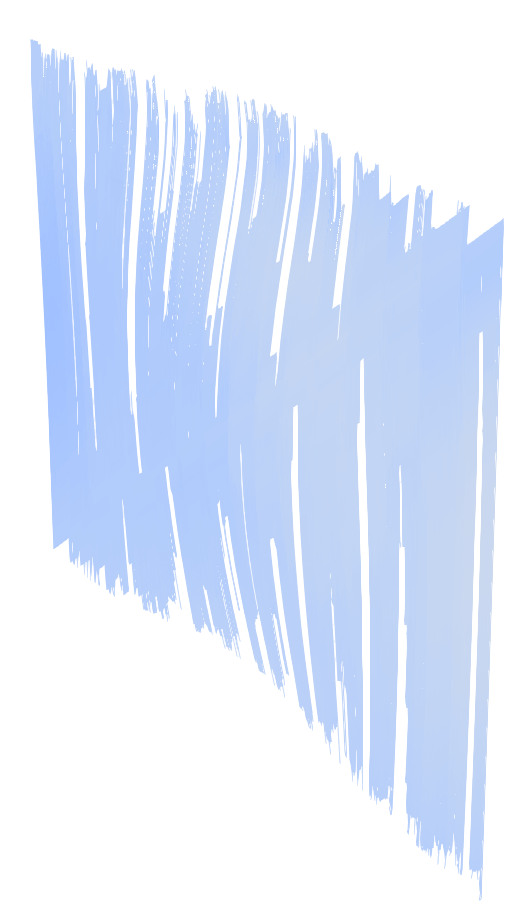} &
		\includegraphics[width=2.5cm]{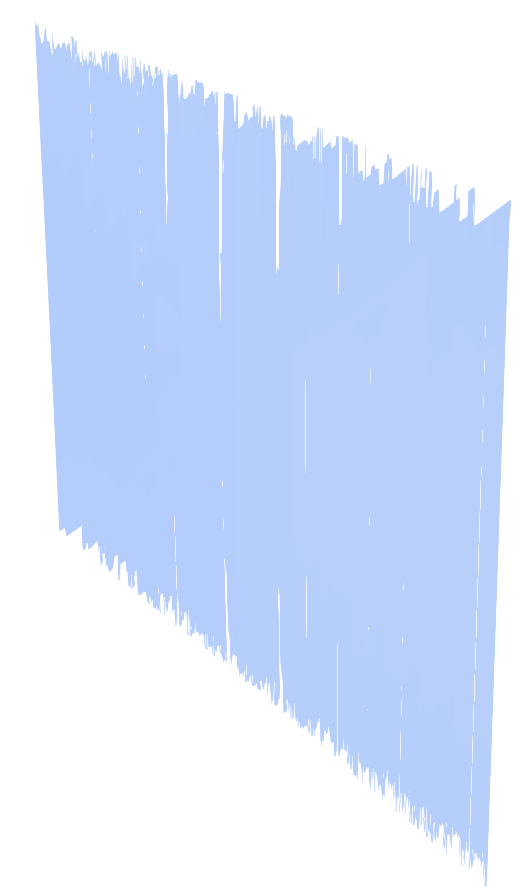} \\
		\hline
	$\Rem = 100$ &	\includegraphics[width=2.5cm]{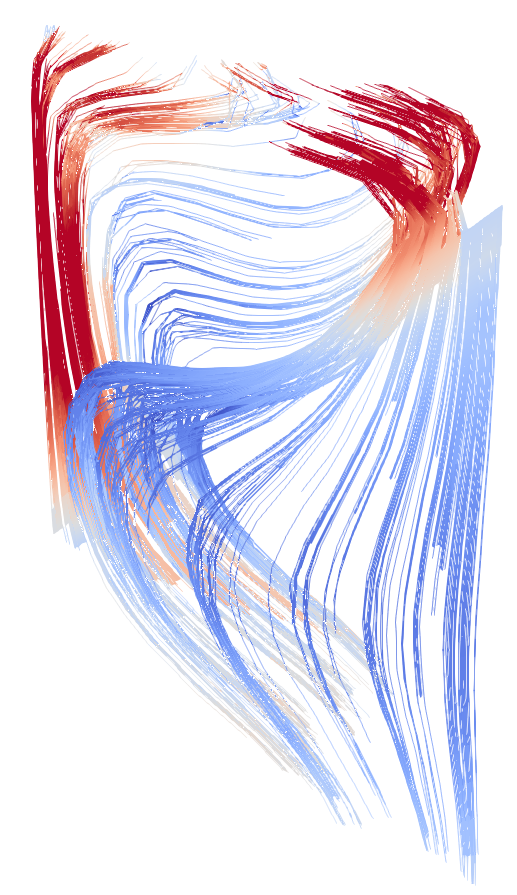} &
		\includegraphics[width=2.5cm]{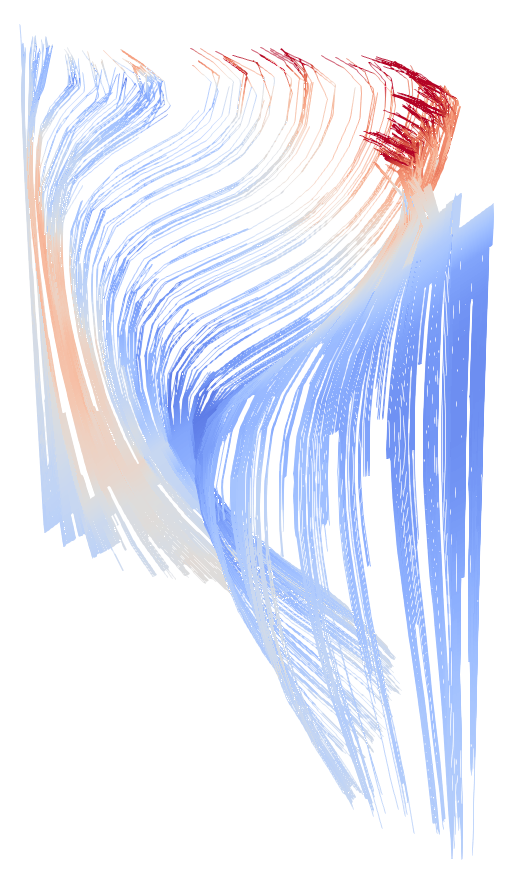} &
		\includegraphics[width=2.5cm]{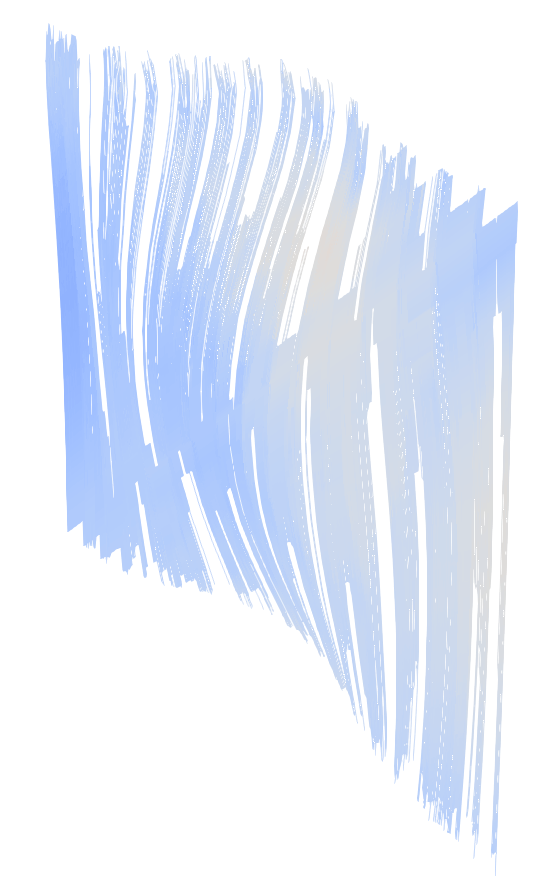} &
		\includegraphics[width=2.5cm]{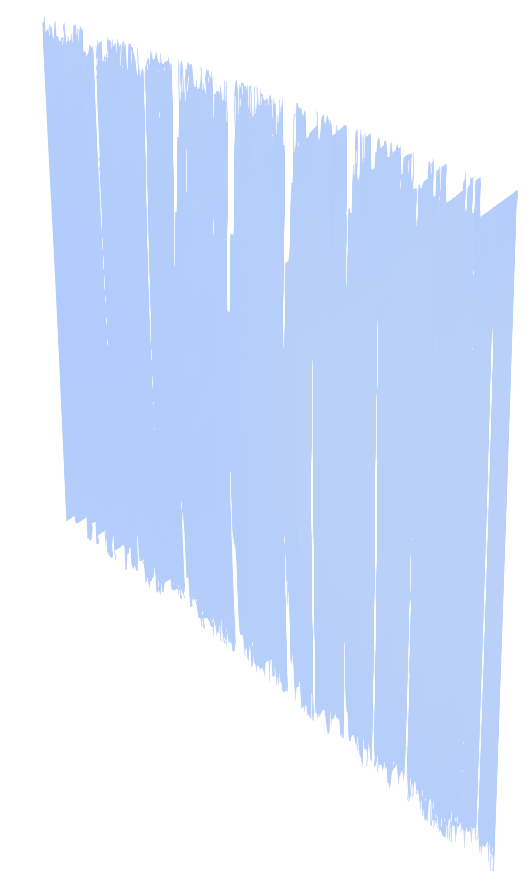} \\
		\hline
       &  $\RH = 0.0$ & $\RH = 0.01$ & $\RH = 0.1$ & $ \RH = 1.0$\\
       \hline
	\end{tabular}
	\caption{Streamlines of the magnetic field for the stationary lid-driven cavity problem for different values of $\Rem$ and $\RH$.\label{fig:Streamlinesldc}}
\end{figure}

Figure \ref{fig:Streamlinesldc} shows plots of the magnetic field for different values of $\Rem$ and $\RH$. For $\RH=0$ one can nicely observe the physical phenomenon that for the standard MHD equations the magnetic fields lines tend to be advected by the fluid flow the higher $\Rem$ is chosen. For increasing $\RH$ one can see that this effect is damped until for $\RH=1$, where the influence of the fluid flow is negligible and the magnetic field is close to the background magnetic field in the direction of $(0,1,0)^\top$. 

\subsection{Test of conservative scheme for $\u \times\n = \mathbf{0}$ }
In this section, we want to numerically verify our results from Section \ref{sec:schemeutimesn} for the boundary conditions $\u \times \n = \mathbf{0}$. Here, we used $\Omega=[0,1]^3$ and a mesh of $12\times12\times12$ cells. We chose the interpolant of the following functions as the initial conditions 
\begin{gather}
	\begin{split}
		\u^0(x,y,z) = \begin{pmatrix}
			-\sin(\pi(x-0.5))\cos(\pi(y-0.5))z(z-1) \\ 
			\cos(\pi(x-0.5))\sin(\pi(y-0.5))z(z-1) \\ 
			0
		\end{pmatrix}, \qquad 
		\B^0(x,y,z)  = \begin{pmatrix}
			-\sin(\pi x)\cos(\pi y) \\ 
			\cos(\pi x) \sin(\pi y) \\ 
			0
		\end{pmatrix},\\
		 \qquad \qquad \qquad
	\end{split}
\end{gather}
which satisfy the boundary conditions $\u^0\times \n = \mathbf{0}$,  $\B^0\times\n=\mathbf{0}$ and the constraints \mbox{$\nabla \cdot \u^0=\nabla \cdot \B^0=0$}. Note that the interpolant of divergence-free functions is still divergence-free for $\mathbb{RT}$ and $\mathbb{BDM}$ elements \cite[Prop. 2.5.2]{boffi2013mixed}. We enforce this property in our implementation by using a sufficiently high quadrature degree in the evaluation of the degrees of freedom for the $\mathbb{RT}$ and $\mathbb{BDM}$ elements; see \cite[Sec. 4.2]{laakmann2021} for more details. Here, we discretize $\u$ with $\mathbb{NED}1$-elements of first order and $p$ with $\mathbb{CG}_1$-elements. 

For the computation of the magnetic helicity we determine a discrete vector-potential such that $\nabla \times \mathbf{A}_h = \B_h$ by the system
\begin{equation}
	\left(\nabla \times \mathbf{A}_h, \nabla \times \k_h\right) = \left(\B_h, \nabla \times \k_h \right) \quad \forall\ \k_h \in \Hhc.
\end{equation}
We solve this singular system with GMRES preconditioned by ILU, which is known to be convergent if the problem is consistent \cite{Ipsen1998}. 

Although, the scheme \eqref{alg:helicityutimesn} contains multiple auxiliary variables, it can be solved efficiently with a fixed point iteration \cite[Section 4]{hu2021helicity}. For the  time step from $t_k$ to $t_{k+1}$ we compute iterative solutions $\left( \u^{(k+1,j)}_{h}, P^{(k+\frac{1}{2},j)}_h,  \B^{(k+1,j)}_{h}, \E^{(k+\frac{1}{2},j)}_{h}, \j^{(k+\frac{1}{2},j)}_{h}, \mathbf{H}^{(k+\frac{1}{2},j)}_{h},\bm \omega^{(k+\frac{1}{2},j)}_{h}\right)$ until the stopping criterion
\begin{equation}
	\frac{\|\u^{(k+1,j+1)}_{h} - \u^{(k+1,j)}_{h}\|}{\|\u^{(k+1,j)}_{h}\|} + \frac{\|\B^{(k+1,j+1)}_{h} - \B^{(k+1,j)}_{h}\|}{\|\B^{(k+1,j)}_{h}\|} < \text{TOL}
\end{equation}
is satisfied for a given tolerance $\text{TOL}$. We initialize the iteration with the values from time step $k$ and first determine the updates  $\left (\E^{(k+\frac{1}{2},j+1)}_{h}, \j^{(k+\frac{1}{2},j+1)}_{h},\mathbf{H}^{(k+\frac{1}{2},j+1)}_{h},\bm \omega^{(k+\frac{1}{2},j+1)}_{h}\right)$ by solving \eqref{alg:helicity-cross-j2} - \eqref{alg:helicity-cross-w2}
with right-hand sides of the level $j$. Then, we update the velocity and pressure by
\begin{alignat}{2}
	\frac{1}{\Delta t} (\u^{(k+1,j+1)}_{h}, \v_h) + (\nabla P^{(k+\frac{1}{2},j+1)}_h, \v_h) &= (\F_h, \v_h)  & \quad \forall\  \v_h \in \Hhd,\\
	(\nabla Q_h, \u^{(k+1,j+1)}_h) &= 0 &\quad \forall \ Q_h \in H^1_0(\Omega), 
\end{alignat}
with
\begin{equation}
	\F_h = \frac{1}{\Delta t} \u^{k}_h + S \j^{(k+\frac{1}{2},j+1)}_h \times \mathbf{H}^{(k+\frac{1}{2},j+1)}_h + \frac{1}{2}\left(\u^{(k+1,j)}_h + \u^{k}_h\right)\times \bm \omega^{(k+\frac{1}{2},j+1)}_h.
\end{equation}
The magnetic field is updated by solving
\begin{equation}
	\frac{1}{\Delta t} (\B^{(k+1,j+1)}_{h}, \C_h) = \frac{1}{\Delta t} (\B^{k}_{h}, \C_h)- (\nabla \times \E^{(k+\frac{1}{2},j+1)}_h, \C_h)  \quad \forall\  \C_h \in \Hhd.
\end{equation}

Figure \ref{fig:uHcurl} shows plots of the different conserved quantities for $\Re=\Rem=\infty$ and $\RH=0.5$. One can clearly see that the energy and hybrid helicity remain constant over time, while the cross and fluid helicity are not conserved. These are the observations we expected from the theory in Section \ref{sec:schemeutimesn}. Moreover, $\div(\B_h)$ and the magnetic helicity also show good preservation with small oscillations on the machine precision level.

In Figure \ref{fig:differentRe}, we show plots of the energy and hybrid helicity for $\RH=0.1$ and multiple finite values of $\Re$ and $\Rem$. This test confirms that both quantities are indeed only conserved in the ideal limit of $\Re=\Rem=\infty$.

Finally, Figure \ref{fig:differentRH} compares the cross and hybrid helicity for different values of $\RH$ in the ideal limit of $\Re=\Rem=\infty$. One can observe that the cross helicity is indeed only conserved for $\RH=0$, which corresponds to the standard MHD equations. On the other hand, the hybrid helicity is conserved for all tested values of $\RH$. Note that the hybrid helicity corresponds for $\RH=0$ to the magnetic helicity.

\begin{figure}[htbp!]
	\centering
		\includegraphics[width=12cm]{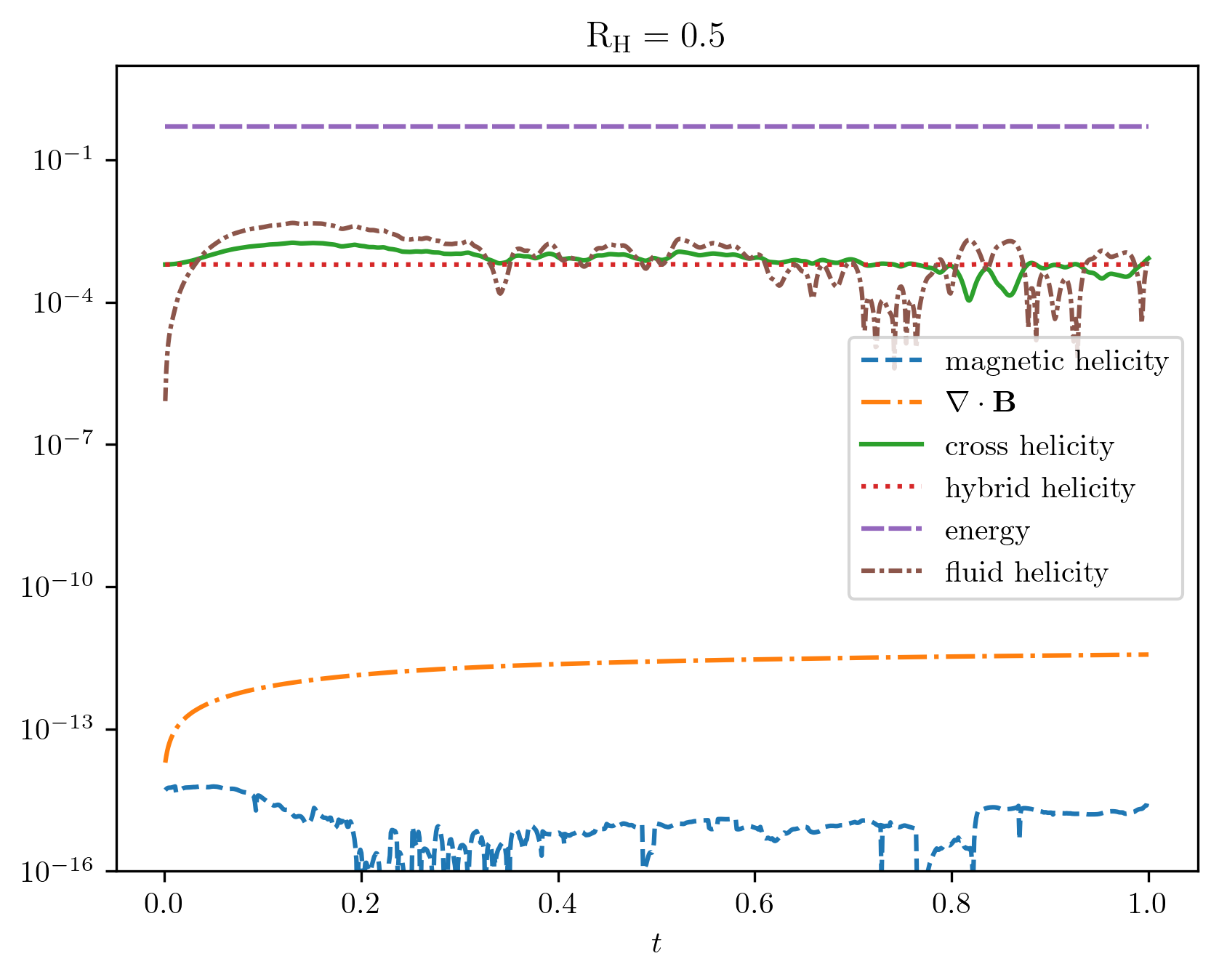} 
	\caption{Plot of conserved quantities for $\u \times \n = \mathbf{0}$ in the ideal limit.}
	\label{fig:uHcurl}
\end{figure}

\begin{figure}[htbp!]
	\centering
	\begin{tabular}{cc}
		\includegraphics[width=7.5cm]{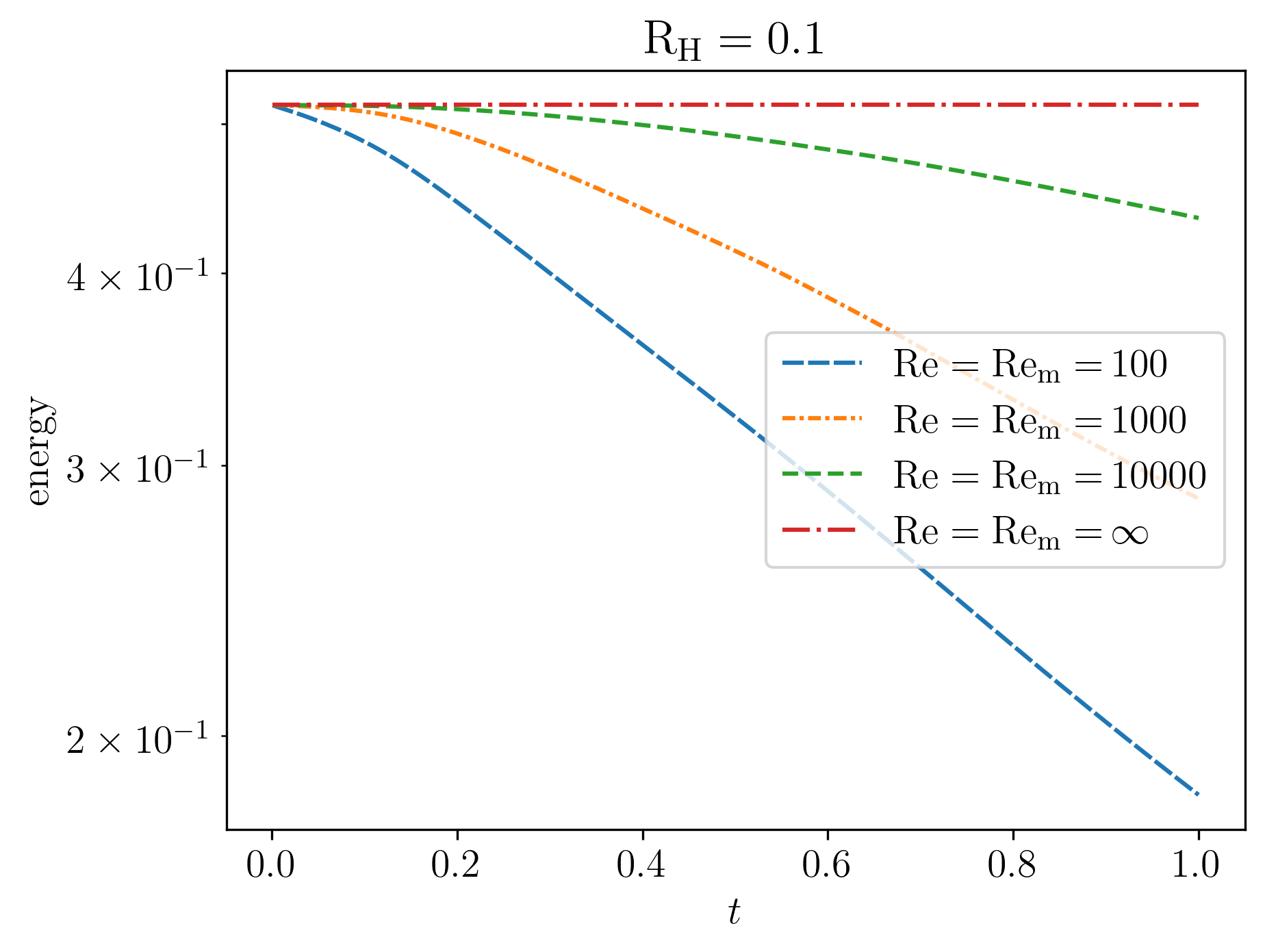} &
		\includegraphics[width=7.5cm]{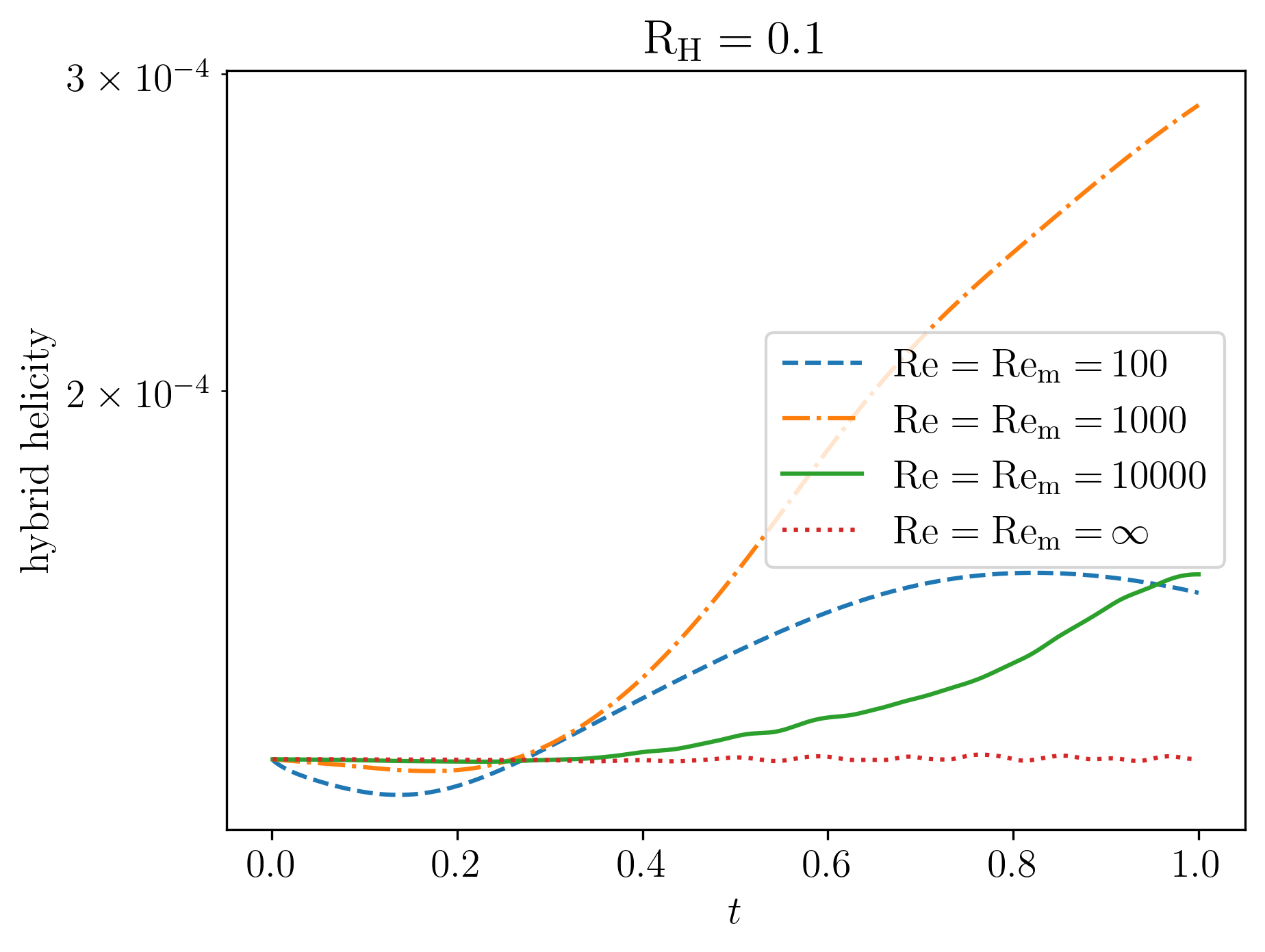}
	\end{tabular}
	\caption{Plots of the energy (left) and hybrid helicity (right) for different values of $\Re$ and $\Rem$ for $\u\times \n = \mathbf{0}$.}
	\label{fig:differentRe}
	
\end{figure}

\begin{figure}[htbp!]
	\centering
	\begin{tabular}{cc}
		\includegraphics[width=7.5cm]{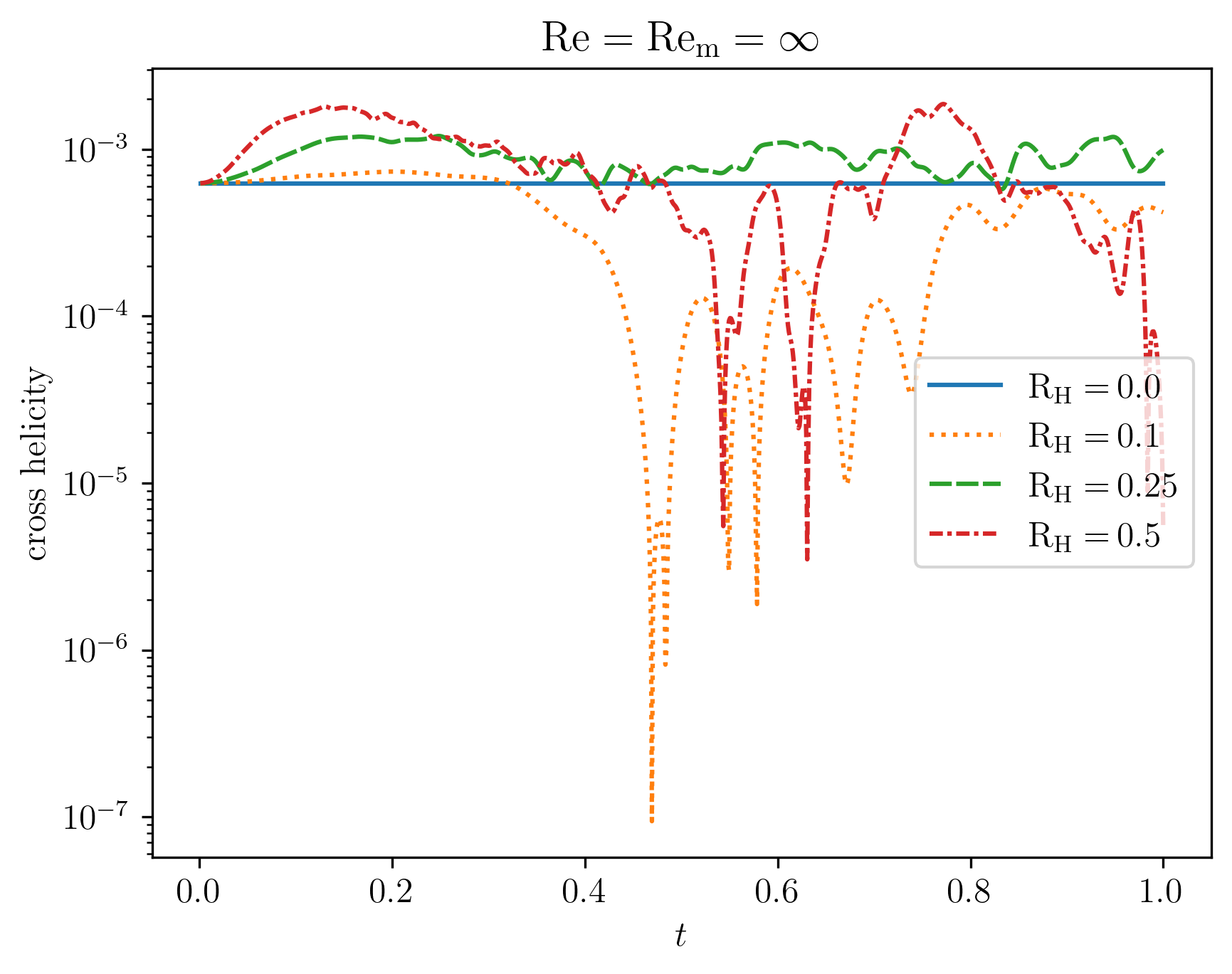} &
		\includegraphics[width=7.5cm]{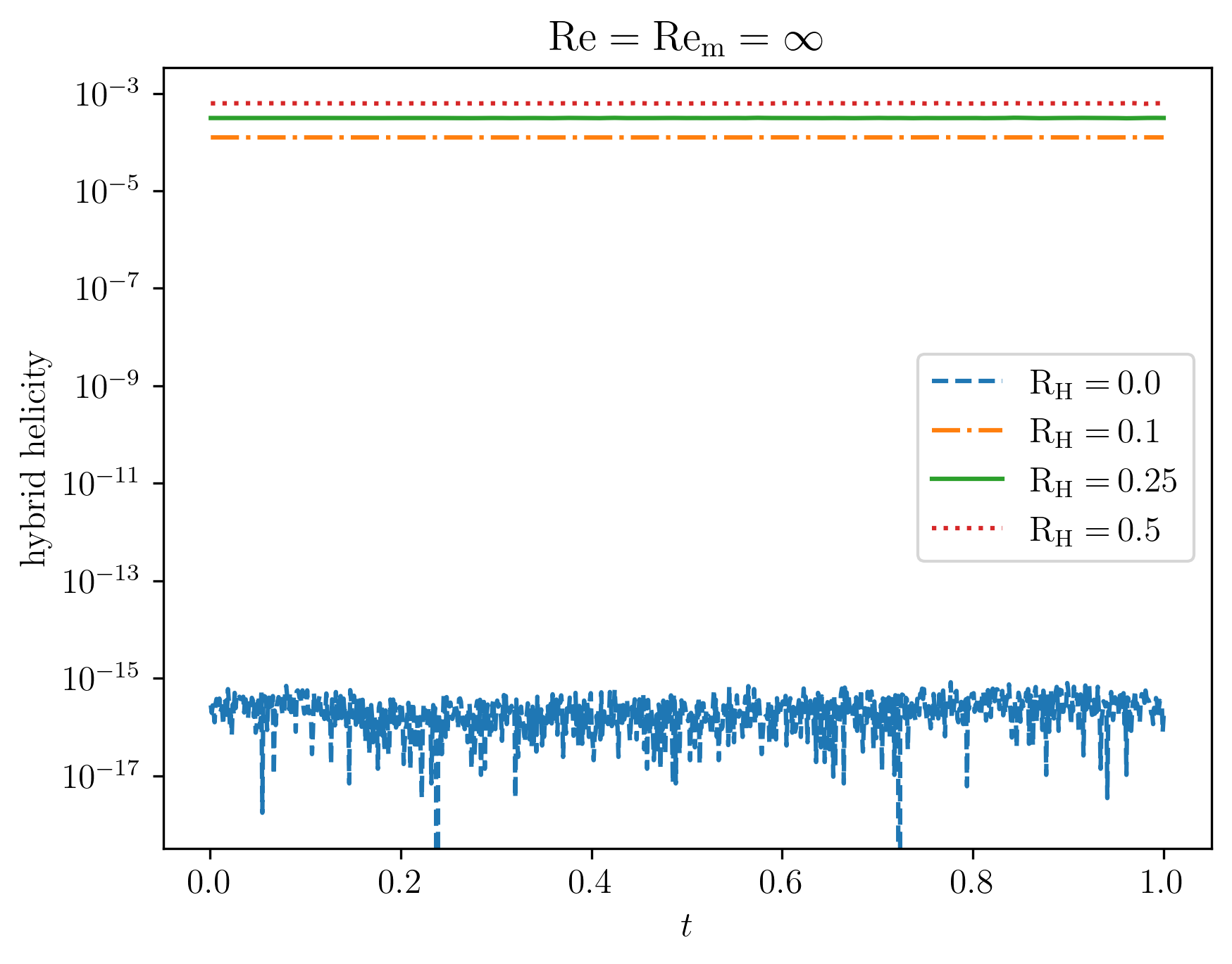}
	\end{tabular}
	\caption{Plots of the cross helicity (left) and hybrid helicity (right) for different values of $\RH$ for $\u\times \n = \mathbf{0}$ in the ideal limit of $\Re=\Rem=\infty$.}
	\label{fig:differentRH}
\end{figure}

\subsection{Test of conservative scheme for $\u \cdot \n = 0$ }
In this test, we  verify our results from Section \ref{sec:schemeucdotn} for the boundary conditions $\u \cdot \n = 0$. Here, we use the same initial conditions for $\B^0$ as before and 
\begin{gather}
	\begin{split}
		\u^0 = \nabla \times \v_{\text{pot}} \quad \text{with} \quad 
		\v_{\text{pot}}(x,y,z) = \frac{1}{\pi}\begin{pmatrix}
			\sin(\pi y)\sin(\pi z) \\ 
			\sin(\pi x)\sin(\pi z) \\ 
			\sin(\pi x)\sin(\pi y)
		\end{pmatrix},
	\end{split}
\end{gather}
which satisfy the boundary condition $\u^0\cdot \n = 0$ and $\nabla \cdot \u^0=0$. We discretize $\u$ with $\mathbb{RT}_1$-elements and $p$ with $\mathbb{DG}_0$-elements. We solve the system with a similar fixed point iteration to the one we described in the last subsection. The iteration coincides with that used in \cite[Section 6]{gawlik2020}.

 In contrast to the case $\u \times \n = \mathbf{0}$, we now enforce $\nabla \cdot \u_h = 0$ precisely over time. All conserved properties are plotted in Figure \ref{fig:uHdiv}. Remember that the hybrid helicity is not conserved for this scheme and therefore not displayed here. Moreover, corresponding plots to Figure \ref{fig:differentRe} and  \ref{fig:differentRH} show similar results and are therefore omitted here.
 
\begin{figure}[htbp!]
	\centering
	\includegraphics[width=12cm]{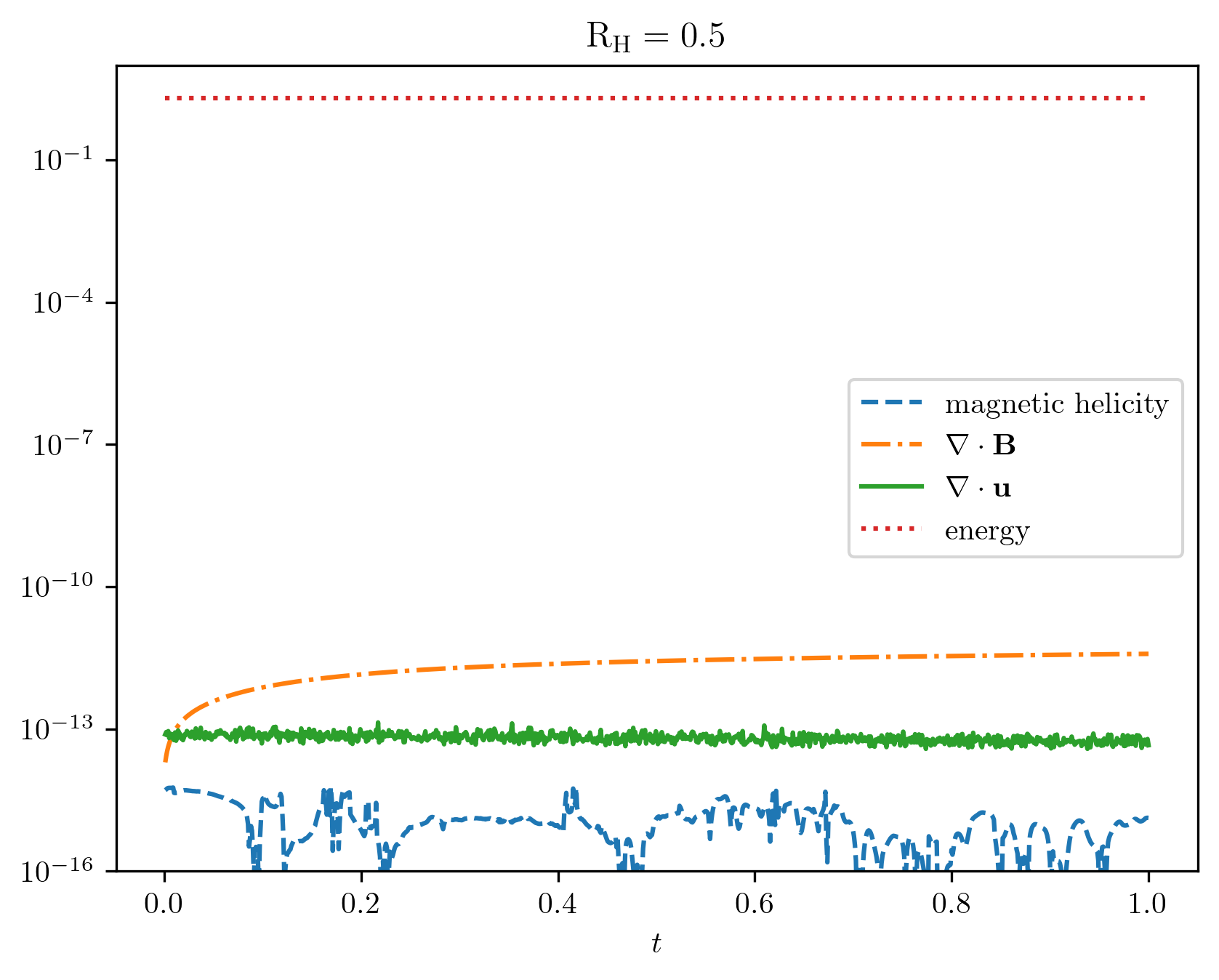} 
	\caption{Plot of conservative quantities for $\u \cdot \n = 0$ in the ideal limit.}
	\label{fig:uHdiv}
\end{figure}

\subsection{Island coalescence problem}
Finally, we consider a 2.5-dimensional island coalescence problem to model a magnetic reconnection process in large aspect ratio tokamaks. For a strong magnetic field in the toroidal direction, the flow can be described in a two-dimensional setting by considering a cross-section of the tokamak. We consider a similar problem as in \cite[Section 4.2]{adler2020monolithic}. The domain $\Omega=(-1,1)^2$ results from the unfolding of an annulus in the cross-sectional direction where the left and right edges are mapped periodically. The equilibrium solution for $k=0.2$ is given by
\begin{align*}
	\u_{eq}=\mathbf{0},\qquad p_{eq}(x,y) = \frac{1-k^2}{2}\left (1+\frac{1}{(\cosh(2\pi y) + k \cos(2\pi x))^2}\right), \\
	\B_{eq}(x,y) = \frac{1}{\cosh (2\pi y) + k \cos(2\pi x)} 
	\begin{pmatrix}
		\sinh (2\pi y) \\ k \sin(2\pi x)
	\end{pmatrix}, \\
u_{3, eq} = B_{3, eq}= 0, \qquad \tilde{\j}_{eq} = \mathbf{0}, \qquad j_{3,eq} = \scurl \tilde{\B}_{eq},
\end{align*}
which results in right-hand sides $\f=\mathbf{0}$ and $\mathbf{g}$ given by
\begin{equation}
	\mathbf{g} = \frac{-8 \pi^2 (k^2-1)}{\Rem(\cosh(2\pi y) + k\cos(2\pi x))^3}
	\begin{pmatrix}
		\sinh(2\pi y) \\ k \sin(2\pi x)
	\end{pmatrix}.
\end{equation}
The components $\tilde{\E}_{eq}$ and $E_{3,eq}$ of the electric field are computed by the equations \eqref{eq:islandcoal-j} and \eqref{eq:islandcoal-j3}. 
The initial condition for $\B_{eq}$ is given by perturbing it for $\varepsilon=0.01$ with 
\begin{equation}
	\Delta \B = \frac{\varepsilon}{\pi} \begin{pmatrix}
		-\cos(\pi x) \sin(\pi y/2) \\
		2\cos(\pi y/2) \sin(\pi x)
	\end{pmatrix}.
\end{equation}
The authors believe that the reported $\Delta \B$ in \cite{adler2020monolithic} includes a typo, as it is not divergence-free, and amended the second component appropriately.
The reconnection rate can be computed as the difference between $\curl \B$ evaluated the origin $(0,0)$ at the current time and the initial time, divided by $\sqrt{\Rem}$. 
In order to make sense of the point evaluation of $j_0$ at $(0,0)$, we project $j_0$ to the space $\mathbb{CG}1$ as in \cite{adler2020monolithic}.  For the additional variables, we set the equilibrium solution
\begin{align}
	u_{3, eq} = B_{3, eq}= 0, \qquad \tilde{\j}_{eq} = \mathbf{0}, \qquad j_{3,eq} = \scurl \tilde{\B}_{eq}. 
\end{align}
Since we use a direct solver for the solution of the Schur complement, we only considered a base mesh $20\times20$ cells and three levels of refinement resulting in an $160\times160$ mesh. We iterated until the final time $T=12.0$ with a step size of $\Delta t = 0.025$.

Figure \ref{fig:reconHall} shows the reconnection rate for different choices of $\RH$ at $\Rem=\Re=100,500,1000,1500$. All graphs have in common that the reconnection process happens faster for higher Hall parameters. This is consistent with the results of other numerical experiments \cite[Section 4.3]{Morales2005}\cite{Huba2003}}. For $\Rem=\Re=100,500$ one can observe that the heights of the peaks increases with growing Hall parameters. At $\Rem = \Re = 1000$ this trend is broken and for $\Rem=\Re=1500$ the height of the peaks starts to decrease for higher Hall parameters. Furthermore, additional peaks occur for high Hall parameters and Reynolds numbers. 

\begin{figure}[htbp!]
\centering
\begin{tabular}{cc}
\includegraphics[width=7.5cm]{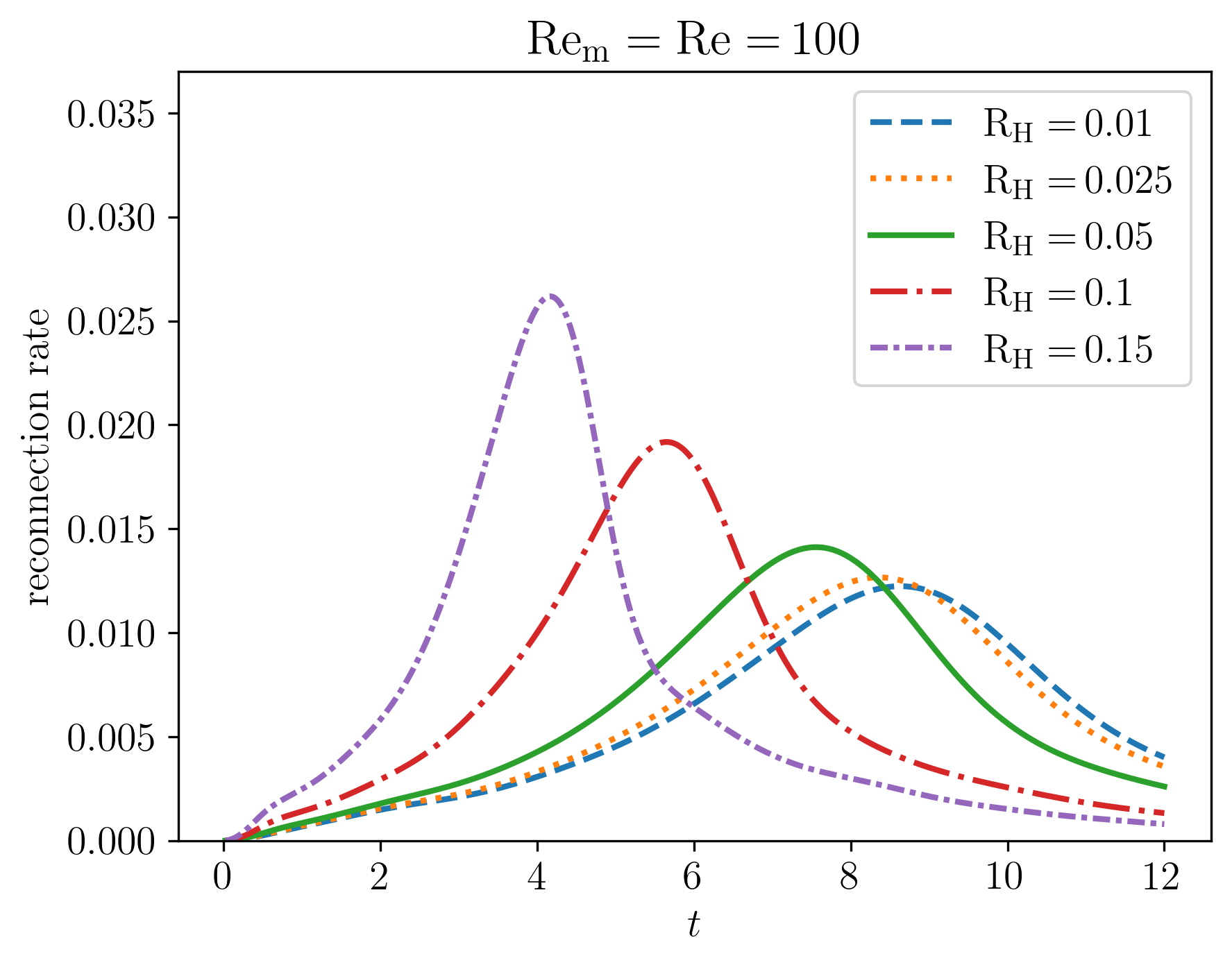} &
\includegraphics[width=7.5cm]{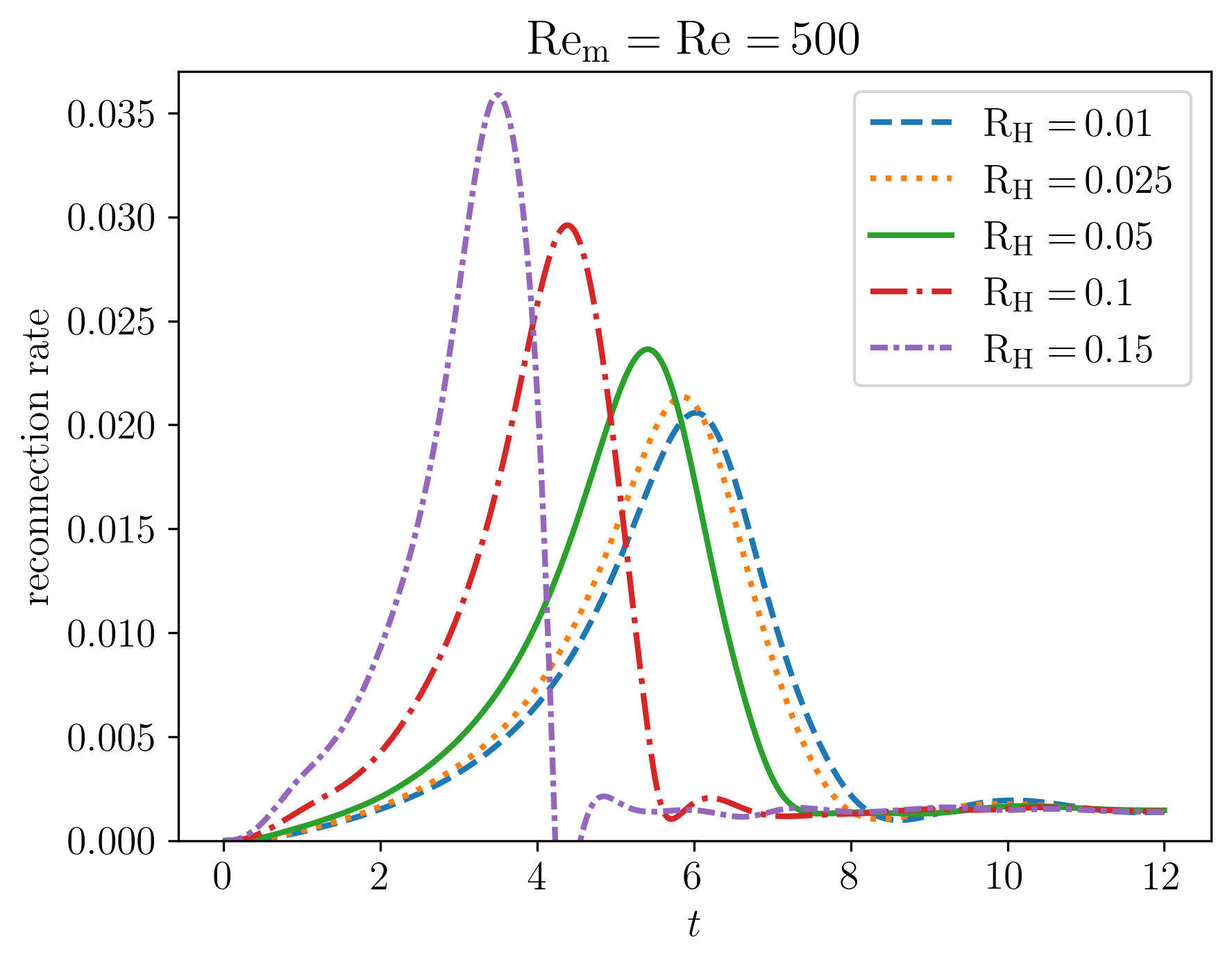} \\
\includegraphics[width=7.5cm]{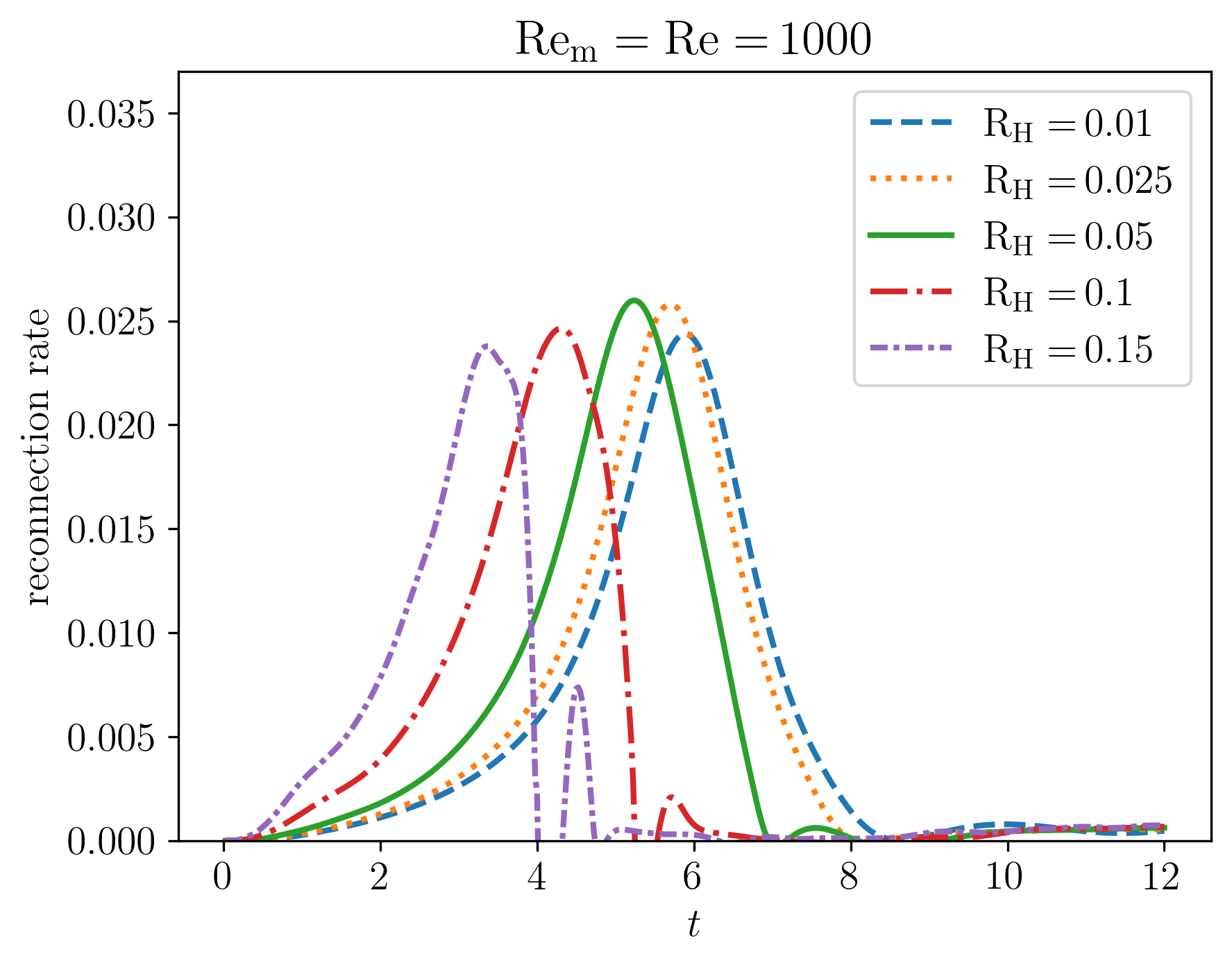} &
\includegraphics[width=7.5cm]{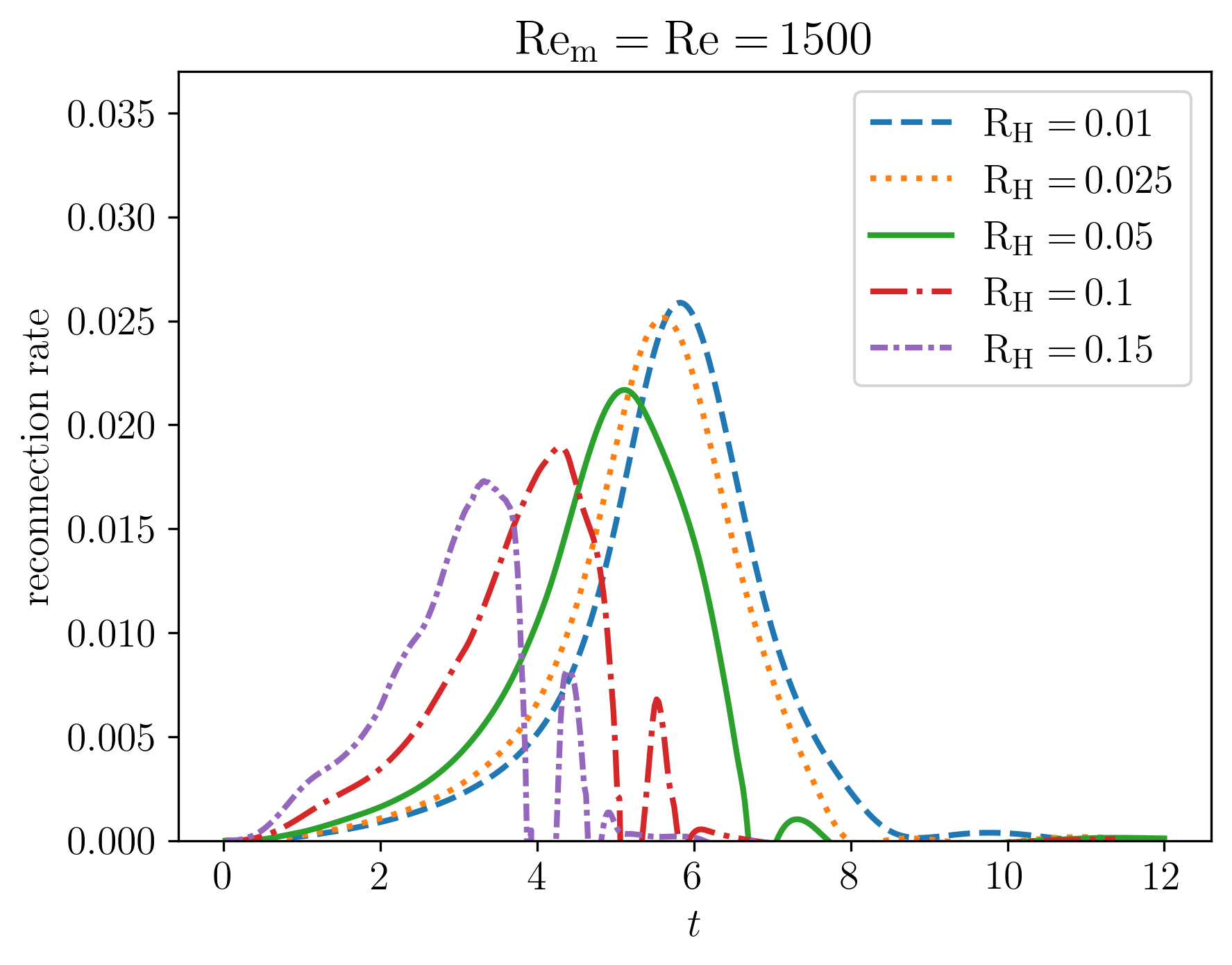} \\
\end{tabular}
\caption{Reconnection rates for an island coalescence problem for different choices of $\RH$.}
\label{fig:reconHall}
\end{figure}

\section{Conclusion and Outlook} We have presented a structure-preserving finite element discretization for the incompressible Hall MHD equations that enforces $\nabla \cdot \B = 0$ precisely and proved the well-posedness and convergence of a Picard-type linearization. Furthermore, we presented formulations that preserve the energy, magnetic and hybrid helicity precisely on the discrete level in the ideal limit for two types of boundary conditions. Finally, we investigated a block preconditioning strategy that works well as long as $\RH$ and $S$ or $\Rem$ are not chosen too high at the same time.

 In future work, we want to improve the robustness of our solver with respect to the Hall parameter, especially in the 2.5-dimensional case where we currently use a direct solver to solve the electromagnetic block. This would also enable us to consider the island coalescence problem on much finer grids.  Furthermore, we are curious to investigate further if there exists a scheme that also preserves the hybrid helicity at the same time as the other quantities for the case $\u\cdot \n = 0$.

 \section*{Code availability} The code that was used to generate the numerical results and all major Firedrake components have been archived on \cite{zenodo/Firedrake-20220223.0}.

\bibliographystyle{elsarticle-num}
\bibliography{reference}{}  

\end{document}